\title{Stochastic Newton Proximal Extragradient Method %
}
\author{%
  Ruichen Jiang \\
  ECE Department \\
  UT Austin\\
  \texttt{rjiang@utexas.edu} \\
  \And
  Michał Dereziński \\
  EECS Department \\
  University of Michigan\\
  \texttt{derezin@umich.edu} \\
  \And
  Aryan Mokhtari \\
  ECE Department\\
  UT Austin\\
  \texttt{mokhtari@austin.utexas.edu}
}
\newcommand\Vector[1]{\mathbf{#1}}
\newcommand\va{{\Vector{a}}}
\newcommand\vb{{\Vector{b}}}
\newcommand\vg{{\Vector{g}}}
\newcommand\vx{{\Vector{x}}}
\newcommand\vy{{\Vector{y}}}
\newcommand\MATRIX[1]{\mathbf{#1}}
\newcommand\mE{{\MATRIX{E}}}
\newcommand\mH{{\MATRIX{H}}}
\newcommand\mI{{\MATRIX{I}}}
\newcommand\mM{{\MATRIX{M}}}
\newcommand\mS{{\MATRIX{S}}}
\newcommand\bigO{\mathcal{O}}
\DeclareMathOperator*{\E}{\mathbb{E}}
\newcommand{\reals}{\mathbb{R}}
\newcommand{\calB}{\mathcal{B}}
\newcommand{\calT}{\mathcal{T}}
\newcommand{\calI}{\mathcal{I}}
\newcommand{\calJ}{\mathcal{J}}
\newcommand{\calU}{\mathcal{U}}
\newcommand{\myalert}[1]{{\noindent\textbf{#1}}}
\newcommand{\green}[1]{{\textcolor{Green}{#1}}}
\newtheorem{theorem}{Theorem}
\newtheorem{lemma}{Lemma}
\newtheorem{proposition}{Proposition}
\newtheorem{assumption}{Assumption}
\newtheorem{corollary}{Corollary}
\theoremstyle{remark}
\newtheorem{remark}{Remark}
\begin{document}

\maketitle

\begin{abstract}
Stochastic second-order methods achieve fast local convergence in strongly convex optimization by using noisy Hessian estimates to precondition the gradient. However, these methods typically reach superlinear convergence only when the stochastic Hessian noise diminishes, increasing per-iteration costs over time. Recent work in \cite{na2022hessian} addressed this with a Hessian averaging scheme that achieves superlinear convergence without higher per-iteration costs.
Nonetheless, the method has slow global convergence, requiring up to $\tilde\bigO(\kappa^2)$ iterations to reach the superlinear rate of $\tilde\bigO((1/t)^{t/2})$, where $\kappa$ is the problem's condition number. In this paper, we propose a novel stochastic Newton proximal extragradient method that improves these bounds, achieving a faster global linear rate and reaching the same fast superlinear rate in $\tilde{\bigO}(\kappa)$ iterations. We accomplish this by extending the Hybrid Proximal Extragradient (HPE) framework, achieving fast global and local convergence rates for strongly convex functions with access to a noisy Hessian oracle. 
\end{abstract}

\section{Introduction}

In this paper, we focus on the use of second-order methods for solving the optimization problem
\begin{equation}\label{eq:minimization}
    \min_{\vx \in \mathbb{R}^d} \; f(\vx),
\end{equation}
where $f: \reals^d \rightarrow \reals$ is strongly convex and twice differentiable.
There is an extensive literature on second-order methods and their fast local convergence properties; e.g.,  \citep{nesterov2008accelerating,monteiro2013accelerated,agarwal2017second,kornowski2020high}. 
However, these results necessitate access to the exact Hessian, which can pose computational challenges. To address this issue, several studies have explored scenarios where only the exact gradient can be queried, while a stochastic estimate of the Hessian is available—similar to the setting we investigate in this paper. This oracle model is commonly encountered in large-scale machine learning problems, as computing the gradient is often much less expensive than computing the Hessian, and approximating the Hessian is a more affordable approach. 
Specifically, consider a finite-sum minimization problem $\min_{x\in \mathbb{R}^d}\sum_{i=1}^n f_i(x)$, where $n$ denotes the number of data points and $d$ denotes the dimension of the problem. To achieve a fast convergence rate, standard first-order methods need to compute one full gradient in each iteration, resulting in a per-iteration computational cost of $\mathcal{O}(nd)$. In contrast, implementing a second-order method such as damped Newton's method involves computing the full Hessian, which costs $\mathcal{O}(nd^2)$. An inexact Hessian estimate can be constructed efficiently at a cost of $\mathcal{O}(sd^2)$, where $s$ is the sketch size or subsampling size \cite{na2022hessian,pilanci2017newton}. Hence, when the number of samples $n$ significantly exceeds $d$, the per-iteration cost of stochastic second-order methods becomes comparable to that of first-order methods. Moreover, using second-order information often reduces the number of iterations needed to converge, thereby lowering overall computational complexity.

A common template among stochastic second-order methods is to combine a deterministic second-order method, such as Newton's method or cubic regularized Newton method, with techniques such as Hessian subsampling~\citep{byrd2011use,erdogdu2015convergence,bollapragada2019exact,roostakhorasani2019sub,derezinski2019distributed, ye2021approximate} or Hessian sketching~\citep{pilanci2017newton,agarwal2017second,derezinski2021newton} that only require a noisy estimate of the Hessian. We refer the reader to \cite{berahas2020investigation,derezinski2024recent} for recent surveys and empirical comparisons. 
In terms of convergence guarantees, the majority of these works, including \citep{erdogdu2015convergence,roostakhorasani2019sub,bollapragada2019exact,pilanci2017newton,agarwal2017second,derezinski2019distributed,derezinski2021newton}, have shown that stochastic second-order methods exhibit a global linear convergence and a local linear-quadratic convergence, either with high probability or in expectation. The linear-quadratic behavior holds when 
\begin{equation}\label{eq:linear_quadratic}
    \|\vx_{t+1} - \vx^*\| \leq c_1\|\vx_t-\vx^*\| + c_2 \|\vx_t-\vx^*\|^2,
\end{equation}
where $\vx^*$ denotes the optimal solution of Problem \eqref{eq:minimization} and $c_1,c_2$ are constants depending on the sample/sketch size at each step. In particular, the presence of the linear term in \eqref{eq:linear_quadratic} implies that the algorithm can only achieve linear convergence when the iterate is sufficiently close to the optimal solution~$\vx^*$. 
Consequently, as discussed in~\cite{roostakhorasani2019sub,bollapragada2019exact}, to achieve superlinear convergence, the coefficient $c_1 = c_{1,t}$ needs to gradually decrease to zero as $t$ increases. However, since $c_1$ is determined by the magnitude of the stochastic noise in the Hessian estimate, this in turn demands the sample/sketch size to increase across the iterations, leading to a blow-up of the per-iteration computational cost.  

The only prior work addressing this limitation and achieving a superlinear rate for a stochastic second-order method without requiring the stochastic Hessian noise to converge to zero is by \cite{na2022hessian}. It uses a weighted average of all past Hessian approximations as the current Hessian estimate. This approach reduces stochastic noise variance in the Hessian estimate, though it introduces bias to the Hessian approximation matrix.
When combined with Newton's method, it was shown that the proposed method achieves local superlinear convergence with a non-asymptotic rate of $(\Upsilon \sqrt{\log(t)/t})^t$ with high probability, where $\Upsilon$ characterizes the noise level of the stochastic Hessian oracle (see Assumption~\ref{assum:subexp_noise}). However, the method may require many iterations to achieve superlinear convergence. Specifically, with the uniform averaging scheme, it takes $\tilde{\bigO}(\kappa^3)$ iterations before the method starts converging superlinearly and $\tilde{\bigO}(\kappa^6/\Upsilon^2)$ iterations before it reaches the final superlinear rate. Here, $\kappa = L_1/\mu$ denotes the condition number of the function $f$, where $L_1$ is the Lipschitz constant of the gradient and $\mu$ is the strong convexity parameter. 
To address this, \cite{na2022hessian} proposed a weighted averaging scheme that assigns more weight to recent Hessian estimates, improving both transition points to $\tilde{\bigO}(\Upsilon^2+\kappa^2)$ while achieving a slightly slower superlinear rate of $\bigO(\Upsilon \log (t)/\sqrt{t})$.

\begin{table}[t!]\scriptsize
\renewcommand{\arraystretch}{1.0}
    \caption{Comparison between Algorithm~\ref{alg:stochastic_NPE} and the stochastic Newton method in \citep{na2022hessian}, in terms of how many iterations it takes to transition to each phase, and the convergence rates achieved. We drop constant factors as well as logarithmic dependence and $1/\delta$, and assume  $1/\text{poly}(\kappa) \leq \Upsilon \leq \bigO(\kappa)$.}
    \label{tab:comparison}
    \centering
    \begin{small}
        \begin{tabular}{llllllll}
    \toprule
    \multirow{2}{*}{Methods}    & \multirow{2}{*}{Weights}    & \multicolumn{2}{l}{Linear phase}              & \multicolumn{2}{l}{Initial superlinear phase}                                 & \multicolumn{2}{l}{Final superlinear phase} \\ \cmidrule(lr){3-4} \cmidrule(lr){5-6} \cmidrule(lr){7-8} 
                                &                                   & $\calT_1$                 & rate $\phi$       & $\calT_2$         & rate $\theta_t^{(1)}$                     & $\calT_3$                                             & rate $\theta_t^{(2)}$ \\ \midrule[1pt]
    \multirow{2}{*}{\makecell[l]{Stochastic\\ Newton 
    \citep{na2022hessian}}}     & Uniform                             & ${\Upsilon^2}$            & $1-{\kappa^{-2}}$ & $\kappa^3$        & $\frac{\kappa^3}{t}$   & $\frac{\kappa^6}{\Upsilon^2}$    & $\Upsilon \sqrt{\frac{\log (t)}{t}}\!$\\
                                & Non-uniform               & ${\Upsilon^2}$            & $1-{\kappa^{-2}}$ & $\kappa^2$        & $\frac{\kappa^{4\log(\kappa)+1}}{t^{\log (t)}} $           & $\kappa^2$      & $\Upsilon \frac{\log (t)}{\sqrt{t}} $\\ \midrule
    \multirow{2}{*}{\makecell[l]{Stochastic \\ NPE 
(\textbf{Ours})}}           &   Uniform                               & $\frac{\Upsilon^2}{\green{\kappa^2}}$     &   $1-\green{\kappa^{-1}}$ & $\green{\kappa^2}$        & $\frac{\green{\kappa^2}}{t}$  & $\frac{\green{\kappa^4}}{ \Upsilon^2}$ & $\Upsilon \sqrt{\frac{\log (t)}{t}}\!$\\
                                & Non-uniform               & $\frac{\Upsilon^2}{\green{\kappa^2}}$     &   $1-\green{\kappa^{-1}}$ & $\Upsilon^2+\green{\kappa}$  & $\frac{(\Upsilon^2+\green{\kappa})^{\log( \Upsilon^2+\green{\kappa})+1}}{t^{\log (t)}} $  & $\Upsilon^2 +\green{\kappa}$ & $\Upsilon \frac{\log (t)}{\sqrt{t}}$ \\
    \bottomrule
    \end{tabular}
    \end{small}
    \vspace{-2mm}
    \end{table}

\textbf{Our contributions.} 
In this paper, we improve the complexity of Stochastic Newton in \cite{na2022hessian} with a method that attains a superlinear rate in significantly fewer iterations. As shown in Table~\ref{tab:comparison}, our method requires fewer iterations for linear convergence, denoted as ${\cal{T}}_1$, by a factor of $\kappa^2$ compared to \cite{na2022hessian}. Additionally, our method achieves a linear convergence rate of $(1-\bigO(1/\kappa))^t$, outperforming the $(1-\bigO(1/\kappa^2))^t$ rate in \cite{na2022hessian}. Thus, our method reaches the local neighborhood of the optimal solution $\vx^*$ and transitions from linear to superlinear convergence faster. Specifically, the second transition point, ${\cal{T}}_2$, is smaller by a factor of $\kappa$ in both uniform and non-uniform averaging schemes when $\Upsilon = \bigO(\sqrt{\kappa})$. Similarly, our method's initial superlinear rate has a better dependence on $\kappa$, leading to fewer iterations, ${\cal{T}}_3$, to enter the final superlinear phase.
To achieve this result, we use the hybrid proximal extragradient (HPE) framework~\cite{solodov1999hybrid, monteiro2012iteration} instead of Newton's method as the base algorithm. The HPE framework provides a principled approach for designing second-order methods with superior global convergence guarantees~\citep{monteiro2010complexity,monteiro2012iteration,monteiro2013accelerated,carmon2022optimal,kovalev2022first}. However, \cite{solodov1999hybrid} and subsequent works focus on cases where $f$ is merely convex, not leveraging strong convexity. Thus, we modify the HPE framework to suit our setting. Specifically, we relax the error condition for computing the proximal step in HPE, enabling a larger step size when the iterate is close to the optimal solution, crucial for achieving the final superlinear convergence rate.

\section{Preliminaries} 

In this section, we formally present our assumptions. 
\begin{assumption}\label{assum_str_cvx}
    The function $f$ is twice differentiable and  $\mu$-strongly convex. 
\end{assumption}

\begin{assumption}\label{assum:bounded_H_diff}
    The  Hessian $\nabla^2f$ satisfies $\|\nabla^2 f(\vx) - \nabla^2 f(\vy)\| \leq M_1$. 
\end{assumption}

\begin{assumption}\label{assum:lips_hess}
   The Hessian  $\nabla^2 f$ is $L_2$-Lipschitz, i.e., $\|\nabla^2 f(\vx) - \nabla^2 f(\vy)\| \leq L_2\|\vx-\vy\|_2$. 
\end{assumption}

Assumption~\ref{assum:bounded_H_diff} is more general than the assumption that $\nabla f$ is $L_1$-Lipschitz. In particular, if the latter assumption holds, then $M_1 \leq L_1$. 
Moreover, we define $\kappa \triangleq M_1/\mu$ as the condition number. 

To simplify our notation, we denote the exact gradient $ \nabla f(\vx)$ and the exact Hessian $ \nabla^2 f(\vx)$ of the objective function by $\vg(\vx) $ and $\mH(\vx)$, respectively. 
As mentioned earlier, we assume that we have access to the exact gradient, but we only have access to a noisy estimate of the  Hessian denoted by $\hat{\mH}(\vx)$. In fact, we require a mild assumption on the Hessian noise. We define the stochastic Hessian noise as $\mE(\vx) \triangleq \hat{\mH}(\vx) - \mH(\vx)$, where it is assumed to be mean zero and sub-exponential. 

\begin{assumption}
\label{assum:subexp_noise}
If we define $\mE(\vx) \triangleq \hat{\mH}(\vx) - \mH(\vx)$, then $\E[\mE(\vx)] = 0$ and $\E[\|\mE(\vx)\|^p] \leq p! \Upsilon_E^p /2$ for all integers $p \geq 2$. Also, define {$\Upsilon \triangleq \Upsilon_E/\mu$} to be the relative noise level. 
\end{assumption}

\begin{assumption}
\label{assum:PSD_hess}
    The Hessian approximation matrix is  positive semi-definite, i.e., $\hat{\mH}(\vx) \succeq 0$,  $\forall \vx \in \reals^d$. 
\end{assumption}

\noindent\textbf{Stochastic Hessian construction.} The two most popular approaches to construct stochastic Hessian approximations are ``subsampling'' and ``sketching''.
\textit{Hessian subsampling} is designed for a finite-sum objective of the form $f(\vx) = \frac{1}{n} \sum_{i=1}^n f_i(\vx)$, where $n$ is the number of samples. In each iteration, a subset $S \subset \{1,2,\dots,n\}$ is drawn uniformly at random, and then the subsampled Hessian at $\vx$ is constructed as $\hat{\mH}(\vx) = \frac{1}{|S|}\sum_{i \in S}\! \nabla^2 f_i(\vx)$. In this case, if each $f_i$ is convex, then the condition in Assumption~\ref{assum:PSD_hess} is satisfied. Moreover, if we further assume that $\|\nabla ^2 f_i(\vx)\| \leq cM_1$ for some $c>0$ and for all $i$, then Assumption~\ref{assum:subexp_noise} is satisfied with $\Upsilon = \bigO(\sqrt{c\kappa \log(d)/|S|}+c\kappa \log(d)/|S|)$ (see \cite[Example 1]{na2022hessian}).
The other approach is \textit{Hessian sketching}, applicable when the Hessian $\mH$ can be easily factorized as $\mH = \mM^\top \mM$, where $\mM \in \reals^{n \times d}$ is the square-root Hessian matrix, and $n$ is the number of samples. This is the case for generalized linear models; see \citep{na2022hessian}. To form the sketched Hessian, we draw a random sketch matrix $\mS \in \reals^{s \times n}$ with sketch size $s$ from a distribution $\mathcal{D}$ that satisfies $\mathbb{E}_{\mathcal{D}}[\mS^\top \mS] = \mI$. The sketched Hessian is then $\hat{\mH} = \mM^\top \mS^\top \mS \mM$. In this case, Assumption~\ref{assum:PSD_hess} is automatically satisfied. Moreover, for Gaussian sketch, Assumption~\ref{assum:subexp_noise} is satisfied with $\Upsilon = \bigO(\kappa (\sqrt{d/s}+d/s))$ (see \cite[Example 2]{na2022hessian}).

\vspace{1mm}
\begin{remark}
The above assumptions are common in the study of stochastic second-order methods, appearing in works on Subsampled Newton \citep{byrd2011use, erdogdu2015convergence, bollapragada2019exact, roostakhorasani2019sub, ye2021approximate}, Newton Sketch \citep{pilanci2017newton, agarwal2017second,derezinski2021newton}, and notably, \cite{na2022hessian}. The strong convexity requirement is crucial as stochastic second-order methods have a clear advantage over first-order methods like gradient descent when the function is strongly convex. Specifically, stochastic second-order methods attain a superlinear convergence rate, as shown in this paper, which is superior to the linear rate of first-order methods.
\end{remark}

\section{Stochastic Newton Proximal Extragradient}\label{sec:stochastic_NPE}

Our approach involves developing a stochastic Newton-type method grounded in the Hybrid Proximal Extragradient (HPE) framework and its second-order variant. 
Therefore, before introducing our proposed algorithm, we will provide a brief overview of the core principles of the HPE framework. 
Following this, we will present our method as it applies to the specific setting addressed in this paper.

\textbf{Hybrid Proximal Extragradient.} 
Next, we first present the Hybrid Proximal Extragradient (HPE) framework for strongly convex functions.
To solve  problem \eqref{eq:minimization}, the HPE algorithm consists of two steps. 
In the first step, given  $\vx_t$, we find a mid-point $\hat{\vx}_t$ by applying an inexact proximal point update $\hat{\vx}_t\approx \vx_t -\eta_t \nabla f(\hat{\vx}_t)$, where $\eta_t$ is the step size. More precisely, we require  
\begin{equation}\label{eq:HPE_1}
    \|\hat{\vx}_t -\vx_t + \eta_t \nabla f(\hat{\vx}_t)\| \leq \alpha\sqrt{\gamma_t}\|\hat{\vx}_t-\vx_t\|, 
\end{equation}
where $\gamma_t = 1+2 \eta_t\mu$, $\mu$ is the strong convexity parameter, and $\alpha \in (0,1)$ is a user-specified parameter. 
Then, in the second step, we perform the extra-gradient update and compute $\vx_{t+1}$ based on 
\begin{equation}\label{eq:HPE_2}
    \vx_{t+1} = \frac{1}{\gamma_t} (\vx_t - \eta_t \nabla f(\hat{\vx}_t)) + \Big(1-\frac{1}{\gamma_t}\Big) \hat{\vx}_t,
\end{equation}
The weights $\frac{1}{\gamma_t}$ in the above convex combination are chosen to optimize the convergence rate.

\vspace{1mm}
\begin{remark}
    When $\mu=0$, the algorithm outline above reduces to the original HPE framework studied in \cite{solodov1999hybrid,monteiro2010complexity}.
    Our modification in \eqref{eq:HPE_1} is inspired by \cite{barre2022note} and allows a larger error when performing the inexact proximal point update, which turns out to be crucial for achieving a fast superlinear convergence rate. Moreover, the modification in \eqref{eq:HPE_2} has been adopted in \cite{jiang2023online}. 
\end{remark}

\textbf{Stochastic Newton Proximal Extragradient (SNPE).} The HPE method described above provides a useful algorithmic framework, instead of a directly implementable method.
The main challenge comes from implementing the first step in \eqref{eq:HPE_1}, which involves an inexact proximal point update. 
Specifically, the naive approach is to solve the \emph{implicit nonlinear equation} ${\vx} -\vx_t + \eta_t \nabla f({\vx}) = 0$, which can be as costly as solving the original problem in \eqref{eq:minimization}.  
To address this issue, \cite{monteiro2010complexity} proposed to approximate the gradient operator $\nabla f({\vx})$ by its local linearization $\nabla f({\vx}_t)+\nabla^2 f({\vx}_t)(\vx-{\vx}_t)$, and then compute $\hat{\vx}_t$ by solving the linear system of equations 
$\hat{\vx}_t -\vx_t + \eta_t(\nabla f({\vx}_t)+\nabla^2 f({\vx}_t)(\hat{\vx}_t-{\vx}_t))  = 0$. 
This leads to the Newton proximal extragradient method that was proposed and analyzed in \cite{monteiro2010complexity}.

However, in our setting, the exact Hessian $\nabla^2 f(\vx_t)$ is not available. Thus, we construct a stochastic Hessian approximation $\tilde{\mH}_t$ from our noisy Hessian oracle as a surrogate of $\nabla^2f(\vx_t)$. We will elaborate on the construction of $\tilde{\mH}_t$ later, but for the present discussion assume that this stochastic Hessian approximation $\tilde{\mH}_t$ is already provided.   
Then in the first step, we will compute $\hat{\vx}_t$ by  
\begin{equation}\label{eq:linear_equation}
    \hat{\vx}_t = \vx_t -\eta_t (\nabla f({\vx}_t)+\tilde{\mH}_t(\hat{\vx}_t-{\vx}_t)),
\end{equation}
where we replace $\nabla f(\hat{\vx}_t)$ by its local linear approximation $\nabla f({\vx}_t)+\tilde{\mH}_t(\hat{\vx}_t-{\vx}_t)$. Moreover, \eqref{eq:linear_equation} is equivalent to solving the following linear system of equations 
$
(\mI+\eta _t \tilde{\mH}_t)(\vx-\vx_t) = -\eta_t \nabla f({\vx}_t)$. 
For ease of presentation, we set $\hat{\vx}_t$ as the exact solution of this system, leading to 
\begin{equation}\label{eq:regularized_Newton}
    \hat{\vx}_t = \vx_t - \eta_t (\mI+\eta _t \tilde{\mH}_t)^{-1} \nabla f({\vx}_t).
\end{equation} 
However, we note that an inexact solution to this linear system is also sufficient for our convergence guarantees so long as $\|(\mI+\eta _t \tilde{\mH}_t)(\hat{\vx}_t-\vx_t) + \eta_t \nabla f(\vx_t)\| \leq \frac{\alpha}{2}\|\hat{\vx}_t-\vx_t\|$; We refer the reader to Appendix~\ref{appen:inexact} for details. Additionally, since we employed a linear approximation %
to determine the mid-point $\hat{\vx}_t$, the condition in~\eqref{eq:HPE_1} may no longer be satisfied. Consequently, it is crucial to verify the accuracy of our approximation after selecting $\hat{\vx}_t$. %
To achieve this, we implement a line-search scheme to ensure that the step size is not large and the linear approximation error is small. 

Next, we discuss constructing the stochastic Hessian approximation $\tilde{\mH}_t$. A simple strategy is using $\hat{\mH}(\vx_t)$ instead of $\nabla^2 f(\vx_t)$, but the Hessian noise would lead to a highly inaccurate approximation of the prox operator, ruining the superlinear convergence rate. To reduce Hessian noise, we follow \cite{na2022hessian} and use an averaged Hessian estimate $\tilde{\mH}(\vx_t)$. We consider two schemes: (i)~uniform averaging; (ii)~non-uniform averaging with general weights.
In the first case, $\tilde{\mH}_t = \frac{1}{t+1} \sum_{i=0}^t \hat{\mH}(\vx_t)$uniformly averages past stochastic Hessian approximations. Motivated by the central limit theorem for martingale differences, we expect $\tilde{\mH}_t$ to have smaller variance than $\hat{\mH}(\vx_t)$. It can be implemented online as
$\tilde{\mH}_t = \frac{t}{t+1}\tilde{\mH}_{t-1}+\frac{1}{t+1}\hat{\mH}(\vx_t)$,
without storing past Hessian estimates. However,  $\tilde{\mH}(\vx_t)$ is a \textit{biased} estimator of $\nabla^2 f(\vx_t)$, since it incorporates stale Hessian information. 
To address the bias-variance trade-off, the second case uses non-uniform averaging to weight recent Hessian estimates more.
Given an increasing non-negative weight sequence $\{w_t\}_{t=-1}^{\infty}$ with $w_{-1}\!=\!0$, the running average is:
\begin{equation}\label{eq:online_averaging}
    \tilde{\mH}_t = \frac{w_{t-1}}{w_t} \tilde{\mH}_{t-1} + \Big(1-\frac{w_{t-1}}{w_t}\Big)\hat{\mH}(\vx_t). 
\end{equation}
Equivalently, with $z_{i,t} = \frac{w_i-w_{i-1}}{w_t}$, $\tilde{\mH}_t$  can be written as $\sum_{i=0}^{t} z_{i,t} \hat{\mH}(\vx_i)$. We discuss uniform averaging in Section~\ref{sec:uniform_averaging} and non-uniform averaging in Section~\ref{sec:weighted_averaging}.

\begin{figure}[!t]
    \begin{minipage}[t]{0.49\textwidth}
        \begin{algorithm}[H]
            \centering
            \caption{Stochastic NPE}\label{alg:stochastic_NPE}\small
            \begin{algorithmic}[1]
            \STATE \textbf{Input:}  $\vx_0 \in \reals^d$, weights $\{w_t\}_{t=0}^{\infty}$, line-search parameters $\alpha, \beta\in (0,1)$, initial step size $\sigma_0 > 0$
            \STATE \textbf{Initialize:} $\tilde{\mH}_{-1} = \mathbf{0}$ and $w_{-1} = 0$
            \FOR{$t=0,1,\dots$}
            \STATE Obtain a stochastic Hessian  $\hat{\mH}_t = \hat{\mH}(\vx_t)$
            \STATE Compute  $\tilde{\mH}_t = \frac{w_{t-1}}{w_t} \tilde{\mH}_{t-1} + (1-\frac{w_{t-1}}{w_t})\hat{\mH}_t$
            \STATE 
            $(\eta_t, \hat{\vx}_t)  = \mathsf{BLS}(\vx_t, \nabla f(\vx_t), \tilde{\mH}_t, \alpha,\beta,\sigma_t)$
            \STATE Let $\gamma_t=1+2\eta_t \mu$ and compute\\
            $ \quad   \vx_{t+1} = \frac{1}{\gamma_t} (\vx_t - \eta_t \nabla f(\hat{\vx}_t)) + (1-\frac{1}{\gamma_t}) \hat{\vx}_t $ \label{line:extragradient}
            \STATE Set $\sigma_{t+1} = \eta_t/\beta$  
            \ENDFOR
            \end{algorithmic}
        \end{algorithm}
    \end{minipage}
\hfill
\begin{minipage}[t]{0.49\textwidth}
    \begin{subroutine}[H]\small
        \caption{$(\eta, \hat{\vx})  = \mathsf{BLS}(\vx, \vg, \tilde{\mH}, \alpha,\beta,\sigma)$}\label{alg:ls}
        \begin{algorithmic}[1]
            \STATE \textbf{Input:} current iterate $\vx\in \reals^d$, gradient $\vg\in \reals^d$,  Hessian approximation $\tilde{\mH}\in \reals^{d\times d}$, line-search parameters  $\alpha, \beta\!\in\! (0,\!1)$, initial trial step size $\sigma\!>\!0$\!
            \STATE Set $\eta \leftarrow \sigma$ and $\hat{\vx} \leftarrow \vx - {\eta} (\mI + \eta \tilde{\mH})^{-1} \vg$ 
            \STATE Set $\gamma \leftarrow {{1+2\eta\mu}}$
            \WHILE{$\|\hat{\vx} - \vx + \eta \nabla f(\hat{\vx}) \| >  {\alpha\sqrt{\gamma}}\|\hat{\vx} - \vx\|$}
            \STATE  Set $\eta \leftarrow \beta\eta $ and $\hat{\vx} \leftarrow \vx - {\eta} (\mI + \eta \tilde{\mH})^{-1} \vg$ 
            \STATE Set $\gamma \leftarrow {1+2\eta\mu}$
            \ENDWHILE
            \STATE \textbf{Output:} $\eta$ and $\hat{\vx}$              
        \end{algorithmic}
    \end{subroutine}
\end{minipage}
\end{figure}

Building on the discussion thus far, we are ready to integrate all the components and present our Stochastic Newton Proximal Extragradient (SNPE) method. The steps of SNPE are summarized in Algorithm~\ref{alg:stochastic_NPE}. 
Each iteration of our SNPE method includes two stages. 
In the first stage, starting with the current point  $\vx_t$, we first query the noisy Hessian oracle and compute the averaged stochastic Hessian $\tilde{\mH}_t$ from~\eqref{eq:online_averaging}, as stated in Step~4. 
Then given the gradient $\nabla f(\vx_t)$, the Hessian approximation $\tilde{\mH}_t$, and an initial trial step size $\sigma_t$, we employ a backtracking line search to obtain $\eta_t$ and $\hat{\vx}_t$, as stated in Step 6 of Algorithm~\ref{alg:stochastic_NPE}. Specifically, in this step, we set $\eta_t \leftarrow \sigma_t$ and compute $\hat{\vx}_t$ as suggested in~\eqref{eq:regularized_Newton}. If $\hat{\vx}_t$ and its corresponding step size $\eta_t$ satisfy \eqref{eq:HPE_1}, meaning the linear approximation error is small, then the step size $\eta_t$ and the mid-point $\hat{\vx}_t$ are accepted and we proceed to the second stage of SNPE. If not, we backtrack the step size $\eta_t$ and try a smaller step size $\beta \eta_t$,  where $\beta \in (0,1)$ is a user-specified parameter. We repeat the process until the condition in  \eqref{eq:HPE_1} is satisfied. The details of the backtracking line search scheme are summarized in Subroutine~~\ref{alg:ls}. 
After completing the first stage and obtaining the pair $(\eta_t,\hat{\vx}_t)$, we proceed to the extragradient step and follow the update in \eqref{eq:HPE_2}, as in Step 7 of Algorithm~\ref{alg:stochastic_NPE}. Finally, before moving to the next time index, we follow a warm-start strategy and set the next initial trial step size $\sigma_{t+1}$ as $\eta_t/\beta$, as shown in Step~8 of Algorithm~\ref{alg:stochastic_NPE}.

\vspace{1mm}
\begin{remark}
    Similar to the analysis in \cite{jiang2023online}, we can show that the total number of line search steps after $t$ iterations can be bounded by $2t-1+\log(\frac{\sigma_0}{\eta_{t-1}})$. Moreover, when $t$ is large enough, on average the line search requires 2 steps per iteration. We defer the details to Appendix~\ref{appen:LS}. 
\end{remark}

\begin{remark}
    Our motivation behind the choice $\sigma_{t+1} = \eta_t/ \beta$ is to allow the step size to grow, which is necessary for achieving a superlinear convergence rate. 
Specifically, as shown in Proposition~\ref{prop:HPE} below, we require the step size $\eta_t$ to go to infinity to ensure that $\lim_{t \rightarrow \infty} \frac{\|\vx_{t+1}-\vx^*\|}{\|\vx_{t}-\vx^*\|} = 0$.  
Note that this would not be possible if we simply set $\sigma_{t+1} = \eta_t$, 
since it would automatically result in $\eta_{t+1} \leq \sigma_{t+1} \leq \eta_t$. Moreover, this condition $\sigma_{t+1} = \eta_{t}/\beta$ is explicitly utilized in Lemmas~\ref{lem:step_size_bnd_induction} and~\ref{lem:step_size_bnd_induction_non} in the Appendix, where we demonstrate that  $\eta_t$ can be lower bounded by the minimum of $\sigma_0/\beta^t$ and another term. 
We should also note that this more aggressive choice of the initial step size at each round could potentially increase the number of backtracking steps. However, as mentioned above, this does not cause a significant issue, since 
the average number of backtracking steps per iteration can be bounded by a constant close to 2. 
\end{remark}

\vspace{-0.5mm}
\subsection{Key properties of SNPE}\label{subsec:convergence_analysis}
\vspace{-0.5mm}

This section outlines key properties of SNPE, applied in Sections~\ref{sec:uniform_averaging} and~\ref{sec:weighted_averaging} to determine its convergence rates. The first result reveals the connection between SNPE's convergence rate and the step size ${\eta_t}$.

\begin{proposition}\label{prop:HPE}
    Let $\{\vx_t\}_{t\geq 0}$ and $\{\hat{\vx}_t\}_{t\geq 0}$ be the iterates generated by Algorithm~\ref{alg:stochastic_NPE}. Then for any $t \geq 0$, 
    we have $\|\vx_{t+1} - \vx^*\|^2 \leq \|\vx_t-\vx^*\|^2(1+2\eta_t \mu)^{-1}$.   
\end{proposition}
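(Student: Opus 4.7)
The plan is to compare $\vx_{t+1}$ and $\hat\vx_t$ directly and exploit strong convexity via strong monotonicity of $\nabla f$ at $\vx^*$. First I would rewrite the extragradient step in a more convenient form. Define the inexact proximal residual $\vr_t := \hat\vx_t - \vx_t + \eta_t\nabla f(\hat\vx_t)$, so that the HPE error condition \eqref{eq:HPE_1} becomes $\|\vr_t\|^2 \leq \alpha^2\gamma_t\|\hat\vx_t - \vx_t\|^2$. Substituting $\vx_t - \eta_t\nabla f(\hat\vx_t) = \hat\vx_t - \vr_t$ into the definition of $\vx_{t+1}$ collapses \eqref{eq:HPE_2} to the compact identity $\vx_{t+1} = \hat\vx_t - \frac{1}{\gamma_t}\vr_t$.

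Next I would aim to show the slightly stronger inequality $\gamma_t\|\vx_{t+1}-\vx^*\|^2 \leq \|\vx_t-\vx^*\|^2 - (1-\alpha^2)\|\hat\vx_t-\vx_t\|^2$, which immediately implies the claim. Expanding the square using the identity above yields
\begin{equation*}
\gamma_t\|\vx_{t+1}-\vx^*\|^2 = \gamma_t\|\hat\vx_t-\vx^*\|^2 - 2\langle \vr_t,\hat\vx_t-\vx^*\rangle + \tfrac{1}{\gamma_t}\|\vr_t\|^2.
\end{equation*}
The cross term is then split using the definition of $\vr_t$ into a displacement piece $\langle \hat\vx_t - \vx_t,\hat\vx_t - \vx^*\rangle$ (handled by the polarization identity) and a gradient piece $\eta_t\langle \nabla f(\hat\vx_t),\hat\vx_t-\vx^*\rangle$ (lower-bounded by $\eta_t\mu\|\hat\vx_t-\vx^*\|^2 = \tfrac{\gamma_t-1}{2}\|\hat\vx_t-\vx^*\|^2$ using strong convexity of $f$ together with $\nabla f(\vx^*)=0$). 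Combining gives
\begin{equation*}
2\langle \vr_t,\hat\vx_t-\vx^*\rangle \;\geq\; \|\hat\vx_t-\vx_t\|^2 + \gamma_t\|\hat\vx_t-\vx^*\|^2 - \|\vx_t-\vx^*\|^2,
\end{equation*}
so the $\gamma_t\|\hat\vx_t-\vx^*\|^2$ terms cancel and we are left with
\begin{equation*}
\gamma_t\|\vx_{t+1}-\vx^*\|^2 \;\leq\; \|\vx_t-\vx^*\|^2 - \|\hat\vx_t-\vx_t\|^2 + \tfrac{1}{\gamma_t}\|\vr_t\|^2.
\end{equation*}
Finally, applying the HPE error bound $\|\vr_t\|^2 \leq \alpha^2\gamma_t\|\hat\vx_t-\vx_t\|^2$ absorbs the last two terms into $-(1-\alpha^2)\|\hat\vx_t-\vx_t\|^2 \leq 0$, completing the proof.

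No real obstacle is expected: the argument is a strongly convex adaptation of the standard HPE analysis of \cite{monteiro2010complexity}, and every inequality is tight in the sense required. The only point to be careful about is tracking how the factor $\gamma_t = 1+2\eta_t\mu$ enters: it must appear consistently on both sides so that strong convexity exactly cancels the $\gamma_t\|\hat\vx_t-\vx^*\|^2$ term produced by expanding $\|\vx_{t+1}-\vx^*\|^2$, which is precisely why the error tolerance in \eqref{eq:HPE_1} is scaled by $\sqrt{\gamma_t}$ rather than~$1$.
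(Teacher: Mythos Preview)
Your proof is correct and lands on exactly the same key inequality as the paper, $\gamma_t\|\vx_{t+1}-\vx^*\|^2 \leq \|\vx_t-\vx^*\|^2 - (1-\alpha^2)\|\hat\vx_t-\vx_t\|^2$, but the route is more direct. The paper never writes the compact identity $\vx_{t+1} = \hat\vx_t - \tfrac{1}{\gamma_t}\vr_t$; instead it splits $\eta_t\langle \nabla f(\hat\vx_t),\hat\vx_t-\vx^*\rangle$ through the intermediate point $\vx_{t+1}$, bounds the piece $\eta_t\langle \nabla f(\hat\vx_t),\hat\vx_t-\vx_{t+1}\rangle$ via Cauchy--Schwarz and Young's inequality together with the HPE error condition, and expands the other piece using the update rewritten as $\eta_t\nabla f(\hat\vx_t)=\vx_t-\vx_{t+1}+2\eta_t\mu(\hat\vx_t-\vx_{t+1})$ and two three-point identities. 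Your version sidesteps the Cauchy--Schwarz/Young step entirely by keeping $\|\vr_t\|^2$ intact in the square expansion and applying the HPE bound only at the end, which is cleaner and makes the role of the $\sqrt{\gamma_t}$ scaling in \eqref{eq:HPE_1} more transparent. Both arguments yield the same stronger inequality, from which the paper's additional claim $\|\vx_t-\hat\vx_t\|\leq \|\vx_t-\vx^*\|/\sqrt{1-\alpha^2}$ (stated in the full version in the appendix and used later in Lemma~\ref{lem:linear_approx}) also follows immediately.
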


Proposition~\ref{prop:HPE} guarantees that the distance to the optimal solution is monotonically decreasing, and it shows a larger step size implies faster convergence. Hence, we need to provide an explicit lower bound on the step size. This task is accomplished in the next lemma. 
For ease of notation, we let $\mathcal{B}$ be the set of iteration indices where the line search scheme backtracks, i.e., $\mathcal{B}\triangleq \{t:\eta_t < \sigma_{t}\}$. Moreover, we  use $\vg(\vx)$ and $\mH(\vx)$ to denote the gradient $\nabla f(\vx)$ and the Hessian $\nabla^2 f(\vx)$, respectively. %

\begin{lemma}\label{lem:step_size_bnd}
    For $t \notin \calB$, we have $\eta_t = \sigma_t$. For $t \in \calB$, let $\tilde{\eta}_t = \eta_t/\beta$ and $\tilde{\vx}_t = \vx_t - \tilde{\eta}_t (\mI + \tilde{\eta}_t \tilde{\mH}_t)^{-1} \nabla f(\vx_t)$. Then, $\|\tilde{\vx}_t - \vx_t\| \leq \frac{1}{\beta} \|\hat{\vx}_t - \vx_t\|$. Moreover,   
    \vspace{-1mm}
       \begin{equation*}
           \eta_t \geq \max\biggl\{\frac{{\alpha }\beta\|\tilde{\vx}_t - \vx_t\|}{\|\vg(\tilde{\vx}_t) - \vg(\vx_t) - \tilde{\mH}_t(\tilde{\vx}_t - \vx_t)\|}, 
           \frac{2\alpha^2 \beta \mu \|\tilde{\vx}_t - \vx_t\|^2}{\|\vg(\tilde{\vx}_t) - \vg(\vx_t) - \tilde{\mH}_t(\tilde{\vx}_t - \vx_t)\|^2}
           \biggr\}.
       \end{equation*}
\vspace{-1mm}
\end{lemma}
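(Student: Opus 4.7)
The proof is a straightforward consequence of the line search termination condition in Subroutine~\ref{alg:ls}, but two preliminary reductions need to be handled cleanly.

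\textbf{Overall plan.} The case $t\notin\calB$ is immediate: by definition of $\calB$, the very first trial step $\eta\leftarrow\sigma_t$ was accepted, so $\eta_t=\sigma_t$. The substance is the case $t\in\calB$. Here, backtracking occurred at least once, so the previous trial step $\tilde\eta_t=\eta_t/\beta$ was \emph{rejected}, meaning the line-search condition
\[
\|\tilde\vx_t-\vx_t+\tilde\eta_t\nabla f(\tilde\vx_t)\|\;>\;\alpha\sqrt{1+2\tilde\eta_t\mu}\,\|\tilde\vx_t-\vx_t\|
\]
holds at the trial iterate $\tilde\vx_t=\vx_t-\tilde\eta_t(\mI+\tilde\eta_t\tilde\mH_t)^{-1}\nabla f(\vx_t)$. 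This single inequality will yield both lower bounds.

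\textbf{First auxiliary claim: $\|\tilde\vx_t-\vx_t\|\leq \beta^{-1}\|\hat\vx_t-\vx_t\|$.} Writing $\vg=\nabla f(\vx_t)$, we have $\|\tilde\vx_t-\vx_t\|=\tilde\eta_t\|(\mI+\tilde\eta_t\tilde\mH_t)^{-1}\vg\|$ and $\|\hat\vx_t-\vx_t\|=\eta_t\|(\mI+\eta_t\tilde\mH_t)^{-1}\vg\|$. Diagonalizing $\tilde\mH_t=\sum_i\lambda_i\vu_i\vu_i^\top$ (which is PSD by Assumption~\ref{assum:PSD_hess}), one checks directly that the map $\eta\mapsto \|(\mI+\eta\tilde\mH_t)^{-1}\vg\|^2=\sum_i\langle\vu_i,\vg\rangle^2/(1+\eta\lambda_i)^2$ is nonincreasing in $\eta\geq 0$. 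Since $\tilde\eta_t\geq\eta_t$, this gives $\|(\mI+\tilde\eta_t\tilde\mH_t)^{-1}\vg\|\leq\|(\mI+\eta_t\tilde\mH_t)^{-1}\vg\|$, and multiplying by $\tilde\eta_t=\eta_t/\beta$ yields the claim.

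\textbf{Second step: simplify the rejection inequality.} From the definition of $\tilde\vx_t$, we have the identity $(\mI+\tilde\eta_t\tilde\mH_t)(\tilde\vx_t-\vx_t)=-\tilde\eta_t\vg(\vx_t)$, i.e. $\tilde\vx_t-\vx_t=-\tilde\eta_t\bigl(\vg(\vx_t)+\tilde\mH_t(\tilde\vx_t-\vx_t)\bigr)$. Substituting this into $\tilde\vx_t-\vx_t+\tilde\eta_t\vg(\tilde\vx_t)$ produces
\[
\tilde\vx_t-\vx_t+\tilde\eta_t\vg(\tilde\vx_t)\;=\;\tilde\eta_t\bigl(\vg(\tilde\vx_t)-\vg(\vx_t)-\tilde\mH_t(\tilde\vx_t-\vx_t)\bigr),
\]
so the rejection condition becomes $\tilde\eta_t\,R>\alpha\sqrt{1+2\tilde\eta_t\mu}\,S$ with $R:=\|\vg(\tilde\vx_t)-\vg(\vx_t)-\tilde\mH_t(\tilde\vx_t-\vx_t)\|$ and $S:=\|\tilde\vx_t-\vx_t\|$.

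\textbf{Final step: extract the two bounds.} Using $\sqrt{1+2\tilde\eta_t\mu}\geq 1$ gives $\tilde\eta_t\geq \alpha S/R$, and using $\sqrt{1+2\tilde\eta_t\mu}\geq \sqrt{2\tilde\eta_t\mu}$ followed by squaring and rearranging gives $\tilde\eta_t\geq 2\alpha^2\mu S^2/R^2$. Multiplying both by $\beta$ (since $\eta_t=\beta\tilde\eta_t$) and taking the maximum produces the stated inequality. There is no real obstacle — the only subtlety is recognizing that the PSD assumption on $\tilde\mH_t$ is what enables the clean comparison between $\tilde\vx_t$ and $\hat\vx_t$; otherwise, everything is an algebraic rewriting of the rejected line-search condition.
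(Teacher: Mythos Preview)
Your proof is correct and essentially identical to the paper's: both use that the rejected step $\tilde\eta_t$ violates the line-search condition, rewrite the left side via the identity $(\mI+\tilde\eta_t\tilde\mH_t)(\tilde\vx_t-\vx_t)=-\tilde\eta_t\vg(\vx_t)$, and then split off the two lower bounds using $\sqrt{1+2\tilde\eta_t\mu}\geq 1$ and $\sqrt{1+2\tilde\eta_t\mu}\geq\sqrt{2\tilde\eta_t\mu}$. The only cosmetic difference is that you prove the norm comparison $\|\tilde\vx_t-\vx_t\|\leq\beta^{-1}\|\hat\vx_t-\vx_t\|$ by explicitly diagonalizing $\tilde\mH_t$, whereas the paper invokes the operator ordering $(\mI+\eta_t\tilde\mH_t)^{-1}\succeq(\mI+\tilde\eta_t\tilde\mH_t)^{-1}$; these are equivalent since the two matrices commute.
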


As Lemma~\ref{lem:step_size_bnd} demonstrates, in the first case where $t\notin \mathcal{B}$, we have $\eta_t = \sigma_t$. Moreover, since we set $\sigma_t = \eta_{t-1}/\beta$ for $t\geq 1$, in this case the step size will increase by a factor of $1/\beta$. In the second case that $t \in \mathcal{B}$, our lower bound on the step size $\eta_t$ depends inversely on the normalized approximation error $\mathcal{E}_t = \frac{\|\vg(\tilde{\vx}_t) - \vg(\vx_t) - \tilde{\mH}_t(\tilde{\vx}_t - \vx_t)\|}{\|\tilde{\vx}_t - \vx_t\|}$. Also, note that $\mathcal{E}_t$ involves an auxiliary iterate  $\tilde{\vx}_t$ instead of the actual iterate $\hat{\vx}_t$ accepted by our line search. We use the first result to relate $\|\tilde{\vx}_t-\vx_t\|$ to $\|\hat{\vx}-\vx_t\|$.   

To shed light on our analysis, we use the triangle inequality and decompose this error into two terms: 
\begin{equation}\label{eq:error_decomp}
    \mathcal{E}_t \leq\! \frac{\|\vg(\tilde{\vx}_t) - \vg(\vx_t) - \mH_t(\tilde{\vx}_t - \vx_t)\|}{\|\tilde{\vx}_t - \vx_t\|} %
    + \|\mH_t - \tilde{\mH}_t\|. \!\!
\end{equation}
The first term in \eqref{eq:error_decomp} represents the intrinsic error from the linear approximation in the inexact proximal update, while the second term arises from the Hessian approximation error. Using the smoothness properties of $f$, we can upper bound the first term, as shown in the following lemma. %

\begin{lemma}\label{lem:linear_approx}
    Under Assumptions~\ref{assum:bounded_H_diff} and \ref{assum:lips_hess}, we have 
    \begin{equation}\label{eq:linear_approx_bound_1}
        \frac{\|\vg(\tilde{\vx}_t) - \vg(\vx_t) - \mH_t(\tilde{\vx}_t - \vx_t)\|}{\|\tilde{\vx}_t - \vx_t\|} \leq \min\left\{ M_1, {\frac{L_2\|\vx_t-\vx^*\|}{2\beta\sqrt{1-\alpha^2}}}\right\}.
    \end{equation}
\end{lemma}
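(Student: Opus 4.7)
The plan is to split the ratio into two separate upper bounds and then take the minimum. The numerator should be written in the usual integral form coming from the fundamental theorem of calculus, namely
\begin{equation*}
    \vg(\tilde{\vx}_t) - \vg(\vx_t) - \mH_t(\tilde{\vx}_t - \vx_t) = \int_0^1 \bigl(\mH(\vx_t + s(\tilde{\vx}_t-\vx_t)) - \mH(\vx_t)\bigr)(\tilde{\vx}_t-\vx_t)\,ds,
\end{equation*}
so that the Hessian-difference operator is exposed in the integrand and one can pass norms inside.

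For the first term in the minimum, I would simply invoke Assumption~\ref{assum:bounded_H_diff}, which gives $\|\mH(\vx_t + s(\tilde{\vx}_t-\vx_t)) - \mH(\vx_t)\| \leq M_1$ uniformly in $s$. Pulling this bound through the integral yields $\|\vg(\tilde{\vx}_t) - \vg(\vx_t) - \mH_t(\tilde{\vx}_t - \vx_t)\| \leq M_1 \|\tilde{\vx}_t - \vx_t\|$, delivering the $M_1$ branch upon dividing by $\|\tilde{\vx}_t-\vx_t\|$. For the second branch, I would instead apply Assumption~\ref{assum:lips_hess} inside the integral: $\|\mH(\vx_t + s(\tilde{\vx}_t-\vx_t)) - \mH(\vx_t)\| \leq L_2 s\|\tilde{\vx}_t-\vx_t\|$. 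Integrating $s$ from $0$ to $1$ produces the standard quadratic remainder $\frac{L_2}{2}\|\tilde{\vx}_t-\vx_t\|^2$, and hence the ratio is at most $\frac{L_2}{2}\|\tilde{\vx}_t-\vx_t\|$.

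To finish this second branch, I must replace $\|\tilde{\vx}_t-\vx_t\|$ by a quantity involving $\|\vx_t-\vx^*\|$. Lemma~\ref{lem:step_size_bnd} already supplies $\|\tilde{\vx}_t-\vx_t\| \leq \frac{1}{\beta}\|\hat{\vx}_t-\vx_t\|$, so it remains to control $\|\hat{\vx}_t-\vx_t\|$ by $\|\vx_t-\vx^*\|$. I expect this to be the main technical obstacle, because Proposition~\ref{prop:HPE} as stated only gives a contraction on $\|\vx_t-\vx^*\|$; however, the standard HPE Lyapunov identity (which underlies Proposition~\ref{prop:HPE}) actually yields the stronger inequality $\gamma_t\|\vx_{t+1}-\vx^*\|^2 + (1-\alpha^2)\|\hat{\vx}_t-\vx_t\|^2 \leq \|\vx_t-\vx^*\|^2$, combining the extragradient identity with the HPE inexactness condition~\eqref{eq:HPE_1} and strong convexity of $f$. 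From this I would extract $\|\hat{\vx}_t-\vx_t\| \leq \|\vx_t-\vx^*\|/\sqrt{1-\alpha^2}$.

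Plugging this into the chain $\|\tilde{\vx}_t-\vx_t\| \leq \frac{1}{\beta}\|\hat{\vx}_t-\vx_t\| \leq \frac{\|\vx_t-\vx^*\|}{\beta\sqrt{1-\alpha^2}}$ and substituting into $\frac{L_2}{2}\|\tilde{\vx}_t-\vx_t\|$ gives exactly the second term $\frac{L_2\|\vx_t-\vx^*\|}{2\beta\sqrt{1-\alpha^2}}$ in the stated minimum, completing the proof. The routine pieces are the two integral estimates; the only delicate step is deriving the HPE-style bound $\|\hat{\vx}_t-\vx_t\| \leq \|\vx_t-\vx^*\|/\sqrt{1-\alpha^2}$, which I would either cite from the proof of Proposition~\ref{prop:HPE} or reprove in a short auxiliary computation using the definitions of $\vx_{t+1}$ and $\gamma_t$ together with the line-search guarantee~\eqref{eq:HPE_1}.
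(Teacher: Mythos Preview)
Your proposal is correct and follows essentially the same approach as the paper: the integral representation, the two branches via Assumptions~\ref{assum:bounded_H_diff} and~\ref{assum:lips_hess}, the chain $\|\tilde{\vx}_t-\vx_t\|\leq \frac{1}{\beta}\|\hat{\vx}_t-\vx_t\|$ from Lemma~\ref{lem:step_size_bnd}, and finally the bound $\|\hat{\vx}_t-\vx_t\|\leq \|\vx_t-\vx^*\|/\sqrt{1-\alpha^2}$ extracted from the HPE Lyapunov inequality in the proof of Proposition~\ref{prop:HPE}. The paper does exactly this, stating the last inequality as part of a ``full version'' of Proposition~\ref{prop:HPE} (Proposition~\ref{prop:HPE_full}) rather than reproving it inline.
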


Lemma~\ref{lem:linear_approx} shows that the linear approximation error is upper bounded by  $M_1$. Moreover, 
the second upper bound is  $O(\|\vx_t-\vx^*\|)$. 
Thus, as Algorithm~\ref{alg:stochastic_NPE} converges to the optimal solution $\vx^*$, the second bound in~\eqref{eq:linear_approx_bound_1} will become tighter than the first one, and the right hand side approaches zero.

To analyze the second term in \eqref{eq:error_decomp}, we isolate the noise component in our averaged Hessian estimate. Specifically, recall $\tilde{\mH}_t = \sum_{i=0}^{t} z_{i,t} \hat{\mH}_i$ and $\hat{\mH}_i = \mH_i + \mE_i$. Thus,
we have $\tilde{\mH}_t = \bar{\mH}_t + \bar{\mE}_t$, where
$\bar{\mH}_t = \sum_{i=0}^{t} z_{i,t} {\mH}_i$ is the aggregated Hessian and $\bar{\mE}_t = \sum_{i=0}^{t} z_{i,t} {\mE}_i$ is the aggregated Hessian noise, 
and it follows from the triangle inequality that $\|\mH_t - \tilde{\mH}_t\| \leq \|\mH_t - \bar{\mH}_t\| + \|\bar{\mE}_t\|$. 
We refer to the first part, $\|\mH_t - \bar{\mH}_t\|$, as the \emph{bias} of our Hessian estimate, and the second part, $\|\bar{\mE}_t\|$, as the \emph{averaged stochastic error}. 
There is an intrinsic trade-off between the two error terms. For the fastest error concentration, we assign equal weights to all past stochastic Hessian noises, i.e., $z_{i,t} = 1/(t+1)$ for all $0 \leq i \leq t$, corresponding to the uniform averaging scheme discussed in Section~\ref{sec:uniform_averaging}. To eliminate bias, we assign all weights to the most recent Hessian matrix ${\mH}_t$, i.e., $z_{t,t}=1$ and $z_{i,t}=0$ for all $i < t$, but this incurs a large stochastic error. To balance these, we present a weighted averaging scheme in Section~\ref{sec:weighted_averaging}, gradually assigning more weight to recent stochastic Hessian approximations.

\section{Analysis of uniform Hessian averaging}\label{sec:uniform_averaging} 

In this section, we present the convergence analysis of the uniform Hessian averaging scheme, where $w_t = t+1$. In this case, we have $\tilde{\mH}_t = \frac{1}{t+1} \sum_{i=0}^{t} \hat{\mH}_i$. As discussed in Section~\ref{subsec:convergence_analysis}, our main task is to lower bound the step size $\eta_t$, which requires us to control the approximation error $\mathcal{E}_t$ by analyzing the two error terms in \eqref{eq:error_decomp}. The first term is bounded by Lemma~\ref{lem:linear_approx}, and the second term can be bounded as $\|\mH_t - \tilde{\mH}_t\| \leq \|\mH_t - \bar{\mH}_t\| + \|\bar{\mE}_t\|$. Next, we establish a bound on $\|\bar{\mE}_t\|$, referred to as the Averaged Stochastic Error, and a bound on $\|\mH_t - \tilde{\mH}_t\|$, referred to as the Bias Term.

\noindent\textbf{Averaged stochastic error.} 
To control the averaged Hessian noise $\|\bar{\mE}_t\|$, we rely on the concentration of sub-exponential martingale difference, as shown in \cite{na2022hessian}. 

\begin{lemma}[{\cite[Lemma 2]{na2022hessian}}]\label{lem:stochastic_error}
    Let $\delta \in (0,1)$ with $d/\delta \geq e$. Then with probability $1-\delta \pi^2/6$, we have  
        $\| \bar{\mE}_t\| \leq 8 \Upsilon_E 
        \sqrt{\frac{\log(d(t+1)/\delta)}{t+1}}$ for any $t\geq 4 \log (d/\delta)$.  
\end{lemma}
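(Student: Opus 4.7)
The plan is to establish this concentration bound via a matrix Bernstein-type inequality for martingale differences, applied to the sub-exponential Hessian noise sequence, followed by a union bound over $t$. Since the statement is attributed to \cite{na2022hessian}, the outline below aims to reconstruct its proof rather than produce a fundamentally new argument.

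First, I would observe that $\{\mE_i\}_{i\ge 0}$ forms a matrix martingale difference sequence with respect to the natural filtration $\mathcal{F}_i = \sigma(\vx_0,\dots,\vx_i)$, because $\E[\mE_i \mid \mathcal{F}_i] = \E[\hat{\mH}(\vx_i) - \mH(\vx_i) \mid \vx_i] = 0$ by Assumption~\ref{assum:subexp_noise}. Under the uniform weights $z_{i,t} = 1/(t+1)$, we have $\bar{\mE}_t = \tfrac{1}{t+1}\sum_{i=0}^t \mE_i$, which is $1/(t+1)$ times a sum of independent-in-the-martingale-sense, mean-zero $d\times d$ symmetric random matrices. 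The sub-exponential moment bound $\E[\|\mE_i\|^p]\le p!\,\Upsilon_E^p/2$ translates, via the standard Taylor expansion argument, into a conditional bound on the matrix moment generating function: for $|\lambda|\le 1/(2\Upsilon_E)$,
\begin{equation*}
\E\bigl[\exp(\lambda \mE_i) \mid \mathcal{F}_i\bigr] \preceq \exp\!\bigl(C\lambda^2 \Upsilon_E^2 \mI\bigr)
\end{equation*}
for some absolute constant $C$, where the matrix exponential inequality holds in the semidefinite order.

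Next, I would invoke the Laplace-transform / Lieb-concavity machinery of Tropp's matrix Bernstein framework, adapted to martingales, to deduce that for $u\ge 0$,
\begin{equation*}
\Pr\!\Bigl[\Bigl\|\sum_{i=0}^t \mE_i\Bigr\| \ge u\Bigr] \;\le\; 2d\,\exp\!\Bigl(-\min\Bigl\{\tfrac{u^2}{c_1 (t+1)\Upsilon_E^2},\, \tfrac{u}{c_2 \Upsilon_E}\Bigr\}\Bigr)
\end{equation*}
for absolute constants $c_1, c_2$. Setting $u = 8 \Upsilon_E \sqrt{(t+1)\log(d(t+1)/\delta)}$ and dividing by $t+1$ would give the target bound for a \emph{single} value of $t$, with failure probability at most $\delta/(t+1)^2$, provided $t+1 \ge 4\log(d/\delta)$ so that the sub-Gaussian branch of the Bernstein bound (the quadratic term in $u$) is the active one. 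This regime condition is precisely where the hypothesis $t\ge 4\log(d/\delta)$ enters: it ensures the large-deviation (linear in $u$) term does not dominate.

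Finally, I would union-bound over all $t \ge 4\log(d/\delta)$. Since $\sum_{t \ge 0} 1/(t+1)^2 \le \pi^2/6$, the total failure probability is at most $\delta \pi^2/6$, giving the claimed uniform-in-$t$ statement. The main obstacle is the second step: obtaining a sharp matrix Bernstein inequality for \emph{sub-exponential} (unbounded) martingale differences with the correct constants so that the prefactor ends up being exactly $8$. One standard way to handle this is to split each $\mE_i$ into a bounded truncation at level $R \asymp \Upsilon_E \log(d(t+1)/\delta)$ plus a tail, apply matrix Bernstein to the truncated part, and control the tail via Markov's inequality using the $p$-th moment bound with $p \asymp \log(d(t+1)/\delta)$; the condition $t\ge 4\log(d/\delta)$ again ensures the truncation bias is absorbed into the final constant. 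Matching constants with \cite[Lemma 2]{na2022hessian} is a bookkeeping exercise once this framework is in place.
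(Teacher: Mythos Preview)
The paper does not prove this lemma at all; it simply cites \cite[Lemma 2]{na2022hessian} and uses the result as a black box. Your reconstruction---matrix Bernstein for sub-exponential martingale differences, with the threshold $t\ge 4\log(d/\delta)$ ensuring the sub-Gaussian branch dominates, followed by a union bound over $t$ yielding the $\pi^2/6$ factor---is the standard route and matches what the cited reference does, so your outline is sound.
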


Lemma~\ref{lem:stochastic_error} shows that, with high probability, the norm of averaged Hessian noise $\|\bar{\mE}_t\|$ approaches zero at the rate of $\tilde{\bigO}(\Upsilon_E/\sqrt{t})$. As discussed in Section~\ref{subsec:convergence_uniform}, this error eventually becomes the dominant factor in the approximation error $\mathcal{E}_t$ and determines the final superlinear rate of our algorithm.   

\begin{remark}
    Our subsequent results are conditioned on the event that the bound on $\|\bar{\mE}_t\|$ stated in Lemma~\ref{lem:stochastic_error}
    is satisfied for all $t \geq 4 \log (d/\delta)$. 
    Thus, to avoid redundancy, we will omit the ``with high probability'' qualification in the following discussion.
\end{remark}

\noindent\textbf{Bias.} We proceed to establish an upper bound on $\|\mH_t - \bar{\mH}_t\|$. The proof can be found in Appendix~\ref{appen:bias}. 
\begin{lemma}\label{lem:bias} If $\tilde{\mH}_t = \frac{1}{t+1} \sum_{i=0}^{t} \hat{\mH}_i$, then 
    $\|\mH_t - \bar{\mH}_t\| \leq \frac{1}{t+1} \sum_{i=0}^{t} \|\mH_t - \mH_i\|$.
    Moreover, for any $i \geq 0$,  we have $\|\mH_t - \mH_i\| \leq \max\{M_1,2L_2 \|\vx_i-\vx^*\| \}$.  
\end{lemma}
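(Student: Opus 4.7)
The plan is to prove the two inequalities separately, both following from basic manipulations once we invoke the right assumptions and the monotonicity of $\|\vx_t - \vx^*\|$ established in Proposition~\ref{prop:HPE}.

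\textbf{First inequality.} Since the weights are $z_{i,t} = 1/(t+1)$ under uniform averaging, $\bar{\mH}_t = \frac{1}{t+1}\sum_{i=0}^t \mH_i$. Writing $\mH_t = \frac{1}{t+1}\sum_{i=0}^t \mH_t$ and subtracting gives $\mH_t - \bar{\mH}_t = \frac{1}{t+1}\sum_{i=0}^t (\mH_t - \mH_i)$, and the triangle inequality applied to this sum of matrices yields the claimed bound. This step is essentially bookkeeping.

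\textbf{Second inequality.} I would bound $\|\mH_t - \mH_i\|$ in two different ways and then observe that the stated $\max$ upper bound is weaker than either bound individually. First, Assumption~\ref{assum:bounded_H_diff} immediately gives $\|\mH_t - \mH_i\| \le M_1$. Second, Assumption~\ref{assum:lips_hess} gives $\|\mH_t - \mH_i\| \le L_2 \|\vx_t - \vx_i\|$, and then I would use the triangle inequality $\|\vx_t - \vx_i\| \le \|\vx_t - \vx^*\| + \|\vx_i - \vx^*\|$ together with the fact that, by Proposition~\ref{prop:HPE}, the sequence $\{\|\vx_k - \vx^*\|\}_{k\ge 0}$ is non-increasing. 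Hence for $i \le t$ we have $\|\vx_t - \vx^*\| \le \|\vx_i - \vx^*\|$, so $\|\vx_t - \vx_i\| \le 2\|\vx_i - \vx^*\|$, yielding $\|\mH_t - \mH_i\| \le 2 L_2 \|\vx_i - \vx^*\|$. Combining both bounds gives the conclusion (even in the stronger form with $\min$).

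\textbf{Main obstacle.} There is no substantive obstacle here; the lemma is essentially a consequence of the triangle inequality applied twice plus monotonicity of the distance to the optimum. The only point to be careful about is the direction of the monotonicity: I need the lower-index iterate $\vx_i$ to be at least as far from $\vx^*$ as $\vx_t$, which is exactly what Proposition~\ref{prop:HPE} provides since the contraction factor $(1+2\eta_t\mu)^{-1} \le 1$. A minor notational subtlety to flag in the write-up is that the bound is stated for ``any $i \ge 0$,'' but in its use inside $\bar{\mH}_t = \frac{1}{t+1}\sum_{i=0}^t \mH_i$ only the range $0 \le i \le t$ appears, which is where the monotonicity argument applies.
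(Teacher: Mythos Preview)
Your proposal is correct and matches the paper's proof essentially step for step: the paper also obtains the first inequality by writing $\bar{\mH}_t$ as an average and applying the triangle inequality (which it phrases as Jensen's inequality), and it obtains the second by invoking Assumption~\ref{assum:bounded_H_diff} for the $M_1$ bound and Assumption~\ref{assum:lips_hess} together with the triangle inequality and the monotonicity from Proposition~\ref{prop:HPE} for the $2L_2\|\vx_i-\vx^*\|$ bound. Your observation that the stronger $\min$ bound actually holds is a valid remark.
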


The analysis of the bias term is more complicated. Specifically, to obtain the best result, we break the sum in Lemma~\ref{lem:bias} into two parts, $\frac{1}{t}\sum_{i=0}^{\calI-1} \|\mH_t-\mH_i\|$ and $\frac{1}{t}\sum_{i=\calI}^{t} \|\mH_t-\mH_i\|$, where $\calI$ is an integer to be specified later. The first part corresponds to the bias from stale Hessian information and converges to zero at $\bigO(M_1 \calI/t)$, as shown by the first bound in Lemma~\ref{lem:bias}. The second part is the bias from recent Hessian information when the iterates are near the optimal solution $\vx^*$. Using the second bound in Lemma~\ref{lem:bias}, we show this part contributes less to the total bias and is dominated by the first part.
Thus, we can conclude that $\|\mH_t-\bar{\mH}_t\| = \bigO(\frac{M_1 \calI}{t+1})$.  

Based on the previous discussions, it is evident that the terms contributing to the upper bound of $\mathcal{E}_t$ all converge to zero, albeit at different rates. Furthermore, the linear approximation error and bias term display distinct global and local convergence patterns, depending on the distance $\|\vx_t-\vx^*\|$. Hence, this necessitates a multi-phase convergence analysis, which we undertake in the following section.

\subsection{Convergence analysis}
\label{subsec:convergence_uniform}

Similar to \cite{na2022hessian}, we consider four convergence phases with three transitions points $\calT_1$, $\calT_2$, and $\calT_3$, whose expressions will be specified later. Due to space limitations, in the following we provide an overview of the four phases and relegate the details to Appendix~\ref{appen:uniform_averaging}. 

\myalert{Warm-up phase $0 \leq t < \calT_1$.}
At the beginning of the algorithm, the averaged Hessian estimate is dominated by stochastic noise and provides little useful information for convergence. Thus, there are generally no guarantees on the convergence rate for $0 \leq t < \calT_1$. However, due to the line search scheme, Proposition~\ref{prop:HPE} ensures that the distance to $\vx^*$ is non-increasing, i.e., $\|\vx_{t+1}-\vx^*\| \leq \|\vx_t-\vx^*\|$ for all $t \geq 0$. During the warm-up phase, the averaged Hessian noise $\|\bar{\mE}_t\|$, which contributes most to the approximation error $\mathcal{E}_t$, is gradually suppressed. Once the averaged Hessian noise is sufficiently concentrated, Algorithm~\ref{alg:stochastic_NPE} transitions to the linear convergence phase, denoted by $\calT_1$.

\myalert{Linear convergence phase $\calT_1 \leq t < \calT_2$.}
After $\calT_1$ iterations, 
Algorithm~\ref{alg:stochastic_NPE} starts converging linearly to the optimal solution $\vx^*$. 
Moreover, during this phase, all the three errors discussed in Section~\ref{subsec:convergence_analysis} continue to decrease. Specifically, Lemma~\ref{lem:linear_approx} shows the linear approximation error is bounded by $\bigO(\|\vx_t-\vx^*\|)$, which converges to zero at a linear rate.  Furthermore, Lemma~\ref{lem:stochastic_error} implies that the averaged Hessian error $\|\bar{\mE}_t\|$ diminishes at a rate of $ \tilde{\bigO}(\frac{\Upsilon_E}{\sqrt{t}})$. Finally, regarding the bias term, it can be shown $\|\mH_t-\bar{\mH}_t\| = \bigO(\frac{M_1 \calI}{t})$ following the discussions after Lemma~\ref{lem:bias}. Thus, once all the three errors are sufficiently small, Algorithm~\ref{alg:stochastic_NPE} moves to the superlinear phase, denoted by $\calT_2$.

\label{subsec:linear}

\myalert{Superlinear phases $\calT_2 \leq t < \calT_3$ and $\calT_3 \leq t < \calT_4$.}
After $\calT_2$ iterations, Algorithm~\ref{alg:stochastic_NPE} converges at a superlinear rate. 
Moreover, 
the superlinear rate is determined by the averaged noise $\|\bar{\mE}_t\|$, which decays at the rate of $\tilde{\bigO}( \frac{\Upsilon_E}{\sqrt{t}})$, and the bias of our averaged Hessian estimate $\tilde{\mH}_t$, which decays at the rate of ${\bigO}(\frac{M_1 \calI}{t})$.
 Hence, as the number of iterations $t$ increases, the averaged noise will dominate and the algorithm transitions from the initial superlinear rate to the final superlinear rate. 

We summarize our convergence guarantees in the following theorem and the proofs are in Appendix~\ref{appen:uniform_averaging}. 

\begin{theorem}\label{thm:uniform_main_thm}
    {Suppose  %
    Assumptions~\ref{assum_str_cvx}-\ref{assum:PSD_hess} hold 
    and the weights for Hessian averaging in SNPE are uniform, and define $C:=\frac{1}{2\beta\sqrt{1-\alpha^2}}+5$. Then, the followings hold:}
        \begin{enumerate}[(a),itemsep=-1mm]
        \vspace{-3mm}
            \item \textbf{Warm-up phase:} If $0\leq t < \calT_1$, then  $\|\vx_{t+1}-\vx^*\| \leq \|\vx_t-\vx^*\|$, where
            $\calT_1= \tilde{\bigO}( \frac{\Upsilon^2}{\kappa^2})$. 
            \item \textbf{Linear convergence phase:} If $\calT_1 \leq t < \calT_2 $, then
            $
             \|\vx_{t+1}-\vx^*\|^2 \leq \|\vx_{t}-\vx^*\|^2 (1+ \frac{2{\alpha} \beta}{3 \kappa} )^{-1}, 
            $
            where $\calT_2 = \tilde{\bigO}(\max\{\frac{\Upsilon^2}{\kappa} + \kappa^2, \Upsilon^2\}) = \tilde{\bigO}(\Upsilon^2 + \kappa^2)$. 
            \item \textbf{Initial superlinear phase:} For $\calT_2 \leq t < \calT_3 $, we have 
         $  \|\vx_{t+1}-\vx^*\| \leq 
                C\rho^{(1)}_t \|\vx_{t}-\vx^*\|$,
            where $\rho_t^{(1)} = \frac{6\kappa \calI}{\alpha\sqrt{2\beta}(t+1)} = \tilde{\bigO}\left( \frac{\Upsilon^2/\kappa+\kappa^2}{t}\right)$ with $\calI$ defined in \eqref{eq:def_I} and $\calT_3 = \tilde{\bigO}(\frac{(\Upsilon^2/\kappa +\kappa^2)^2}{\Upsilon^2})$. 
            \item \textbf{Final superlinear phase:} Finally, for $t \geq \calT_3 $, we have $
                \|\vx_{t+1}-\vx^*\| \leq 
                C\rho^{(2)}_t \|\vx_{t}-\vx^*\|$,
            where $\rho_t^{(2)} = \frac{8 \sqrt{2} \Upsilon }{\alpha\sqrt{\beta}}\sqrt{\frac{\log(d(t+1)/\delta)}{t+1}} = \bigO\Big(\Upsilon\sqrt{\frac{ \log (t)}{{t}}}\Big)$.
        \end{enumerate}
    \end{theorem}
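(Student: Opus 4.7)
The plan is a four-phase induction that, at each phase, derives a lower bound on $\eta_t$ via Lemma~\ref{lem:step_size_bnd} and plugs it into Proposition~\ref{prop:HPE} to obtain the claimed contraction. The lower bound on $\eta_t$ is inversely tied to the approximation error $\mathcal{E}_t = \|\vg(\tilde{\vx}_t) - \vg(\vx_t) - \tilde{\mH}_t(\tilde{\vx}_t - \vx_t)\|/\|\tilde{\vx}_t - \vx_t\|$, and throughout I would control $\mathcal{E}_t$ using the three-term decomposition~\eqref{eq:error_decomp}: the linear-approximation error (Lemma~\ref{lem:linear_approx}), the Hessian bias $\|\mH_t - \bar{\mH}_t\|$ (Lemma~\ref{lem:bias}), and the averaged stochastic noise $\|\bar{\mE}_t\|$ (Lemma~\ref{lem:stochastic_error}). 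The four phases correspond to which of these three terms is dominant.

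Part~(a) follows immediately: $\eta_t > 0$, so Proposition~\ref{prop:HPE} gives $\|\vx_{t+1}-\vx^*\| \le \|\vx_t-\vx^*\|$. The threshold $\calT_1$ is fixed by requiring Lemma~\ref{lem:stochastic_error} to force $\|\bar{\mE}_t\| = \bigO(M_1)$, which matches the trivial $M_1$-bounds on the other two terms of~\eqref{eq:error_decomp}; solving $\tilde{\bigO}(\Upsilon_E/\sqrt{\calT_1}) \lesssim M_1$ gives $\calT_1 = \tilde{\bigO}(\Upsilon^2/\kappa^2)$. With $\mathcal{E}_t = \bigO(M_1)$ in hand, the \emph{first} bound of Lemma~\ref{lem:step_size_bnd} yields $\eta_t \ge \Omega(1/M_1) = \Omega(1/(\mu\kappa))$, and Proposition~\ref{prop:HPE} delivers the $(1+\Theta(1/\kappa))^{-1}$ rate of Part~(b). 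This phase runs until both the linear-approximation bound $\bigO(L_2\|\vx_t-\vx^*\|)$ from Lemma~\ref{lem:linear_approx} and the noise $\|\bar{\mE}_t\|$ drop below the scale $\Theta(\mu\sqrt{\kappa})$ needed to unlock the stronger second bound of Lemma~\ref{lem:step_size_bnd}; the former takes $\tilde{\bigO}(\kappa)$ further linear-rate contractions, while the latter needs $\Omega(\Upsilon^2/\kappa)$ iterations, producing the stated $\calT_2 = \tilde{\bigO}(\Upsilon^2+\kappa^2)$.

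For Parts~(c) and~(d), I would switch to the \emph{second} bound of Lemma~\ref{lem:step_size_bnd}, which gives $\eta_t \ge 2\alpha^2\beta\mu/\mathcal{E}_t^2$, and hence, via Proposition~\ref{prop:HPE}, a superlinear contraction factor of order $\mathcal{E}_t/(\alpha\mu\sqrt{2\beta})$. The key ingredient is the bias: following the hint after Lemma~\ref{lem:bias}, I would split $\frac{1}{t+1}\sum_{i=0}^{t}\|\mH_t-\mH_i\|$ at an index $\calI$, using $\|\mH_t-\mH_i\|\le M_1$ for the stale entries $i<\calI$ and $\|\mH_t-\mH_i\|\le 2L_2\|\vx_i-\vx^*\|$ for the recent entries, and choose $\calI$ so that under the inductive hypothesis on past contractions the recent-entries sum is dominated by the stale-entries sum $M_1\calI/(t+1)$. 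This yields $\|\mH_t-\bar{\mH}_t\| = \tilde{\bigO}(M_1\calI/t)$; combining with the linear-approximation term and the noise term and substituting into the contraction factor produces $\rho_t^{(1)} = \tilde{\bigO}(\kappa\calI/t)$ for Part~(c) while the bias dominates, and $\rho_t^{(2)} = \bigO(\Upsilon\sqrt{\log(t)/t})$ for Part~(d) once the noise takes over at $\calT_3 = \tilde{\bigO}(\kappa^4/\Upsilon^2)$.

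The hardest step is Part~(c), which is not self-contained: bounding the recent-Hessian piece of the bias requires an a priori contraction estimate at all past iterates $i \ge \calI$, which is precisely the estimate we are trying to establish. I would handle this by a careful forward induction on $t$, verifying that under the candidate bound $\|\vx_{t+1}-\vx^*\| \le C\rho_t^{(1)}\|\vx_t-\vx^*\|$ the recent-Hessian contribution stays subordinate to the stale-Hessian contribution, closing the induction; I expect $\calI$ to be polylogarithmic in $t$. Additional book-keeping is needed at the boundary where Lemma~\ref{lem:step_size_bnd} transitions from its first to its second bound and to ensure $C\rho_t^{(1)} < 1$ uniformly over the phase so the induction does not degrade.
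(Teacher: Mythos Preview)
Your overall plan matches the paper's: a four-phase analysis driven by Proposition~\ref{prop:HPE} and Lemma~\ref{lem:step_size_bnd}, with $\mathcal{E}_t$ controlled via the three-term decomposition~\eqref{eq:error_decomp} and the bias split at an index $\calI$. Two points, however, deserve correction.

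First, $\calI$ is not chosen adaptively and is not polylogarithmic in $t$: it is the \emph{fixed} iteration count~\eqref{eq:def_I} at which the linear-phase contraction of Part~(b) first forces $\|\vx_t-\vx^*\| \le \nu\mu/L_2$, so $\calI = \tilde{\bigO}(\Upsilon^2/\kappa^2 + \kappa)$ independently of $t$. (Relatedly, the threshold at which the second bound of Lemma~\ref{lem:step_size_bnd} takes over is $\mathcal{E}_t = \bigO(\mu)$, not $\bigO(\mu\sqrt{\kappa})$.)

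Second, and more substantively, you have misidentified which term creates the self-referential loop. The recent-Hessian piece $\frac{1}{t+1}\sum_{i\ge\calI}\|\mH_t-\mH_i\| \le \frac{2L_2}{t+1}\sum_{i\ge\calI}\|\vx_i-\vx^*\|$ needs only the \emph{linear} rate of Part~(b), which already holds for all $i\ge\calI\ge\calT_1$: the sum is a geometric series with ratio $(1+2\alpha\beta/(3\kappa))^{-1/2}$ started at $\|\vx_\calI-\vx^*\|\le\nu\mu/L_2$, giving $\bigO(\mu\calI/(t+1))$, which is dominated by the stale piece $M_1\calI/(t+1)$. No appeal to the superlinear hypothesis is required here. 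The genuine circularity lies in the \emph{first} term of~\eqref{eq:error_decomp}, the linear-approximation error $\bigO(L_2\|\vx_t-\vx^*\|)$ from Lemma~\ref{lem:linear_approx}: to make its contribution to $1/\sqrt{2\eta_t\mu}$ of order $\rho_t$ one needs $L_2\|\vx_t-\vx^*\|/\mu = \bigO(\rho_t)$, which is precisely the superlinear estimate being proved. The paper closes this loop by inducting on the pair of inequalities $\frac{L_2\|\vx_t-\vx^*\|}{2\alpha\sqrt{\beta}\mu}\le\rho_t$ and $\frac{1}{\sqrt{2\mu\eta_t}}\le C\rho_t$ simultaneously; the inductive step reduces to checking $C\rho_s^2/\sqrt{2}\le\rho_{s+1}$, which holds once $\rho_s$ is below an absolute constant (this is what fixes the parameter $\nu$ and the transition point $\calT_2$). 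Your proposed induction on the bias would also close, but it couples the two difficulties together and makes the constants and the base case harder to track; decoupling the bias (via the linear rate) from the linear-approximation term (via the superlinear induction) is the cleaner route.
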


    \myalert{Comparison with \cite{na2022hessian}}. 
As shown in Table~\ref{tab:comparison}, our method in Algorithm~\ref{alg:stochastic_NPE} with uniform averaging achieves the same final superlinear convergence rate as the stochastic Newton method in \cite{na2022hessian}. However, it transitions to the linear and superlinear phases much earlier. Specifically, the initial transition point $\calT_1$ is improved by a factor of $\kappa^2$, and our linear rate in Lemma~\ref{lem:linear_phase} is faster. This reduces the iterations needed to reach the local neighborhood, cutting the time to reach $\calT_2$ and $\calT_3$ by factors of $\kappa$ and $\kappa^2$.

\section{Analysis of weighted Hessian averaging}
\label{sec:weighted_averaging}

Previously, we showed Algorithm~\ref{alg:stochastic_NPE} with uniform averaging eventually achieves superlinear convergence. However, as per Theorem~\ref{thm:uniform_main_thm}, it requires $\tilde{\bigO}(\frac{\kappa^4}{\Upsilon^2})$ iterations to reach this rate. To achieve a faster transition, we follow \cite{na2022hessian} and use Hessian averaging with a general weight sequence $\{w_t\}$. We show this method also outperforms the stochastic Newton method in \cite{na2022hessian}. Specifically, we set $w_t = w(t)$ for all integer  $t \geq 0$, where $w(\cdot):\reals \rightarrow \reals$ satisfies certain regularity conditions as in \cite[Assumption 3]{na2022hessian}.

\begin{assumption}
\label{assum:w}
     (i) $w(\cdot)$ is twice differentiable; (ii) $w(-1) = 0$, $w(t)>0$, $\forall t \geq 0$; (iii) $w'(-1) \geq 0$; 
    (iv) $w''(t) \geq 0$, $\forall t \geq -1$;  (v) $\max\left\{\frac{w(t+1)}{w(t)}, \frac{w'(t+1)}{w'(t)} \right\}\leq \Psi$, $\forall t \geq 0$ for some $\Psi \geq 1$. 
\end{assumption}
Choosing $w(t) = t^p$ for any $p \geq 1$ satisfies Assumption~\ref{assum:w}. Additionally, as discussed in \cite{na2022hessian}, a suitable choice is $w(t) = (t+1)^{\log(t+4)}$, allowing us to achieve the optimal transition to the superlinear rate. 
Since the analysis in this section closely resembles that in Section~\ref{sec:uniform_averaging} on uniform averaging, we will only present the final result here for brevity. The four stages of convergence are detailed in the following theorem, with intermediate lemmas and proofs in the appendix. To simplify our bounds, we report results for non-uniform averaging with $w(t)=(t+1)^{\log(t+4)}$.

\begin{theorem}\label{thm:non_uniform_main_thm}
Suppose  %
Assumptions~\ref{assum_str_cvx}-\ref{assum:PSD_hess} hold 
and the weights for Hessian averaging in SNPE are defined as $w(t)=(t+1)^{\log(t+4)}$, and define $C':=(\frac{1}{10\beta\sqrt{2(1-\alpha^2)}}+\frac{1}{\sqrt{2}})$. Then, the following hold:

    \begin{enumerate}[(a), itemsep = -1mm]
    \vspace{-3mm}
        \item \textbf{Warm-up phase:} If $0\leq t < \calU_1$, then  $\|\vx_{t+1}-\vx^*\| \leq \|\vx_t-\vx^*\|$, where
        $\calU_1= \tilde{\bigO}( \frac{\Upsilon^2}{\kappa^2})$. 
        \item \textbf{Linear convergence phase:} If $\ \calU_1 \leq t < \calU_2 $, then 
        $
         \|\vx_{t+1}-\vx^*\|^2 \leq \|\vx_{t}-\vx^*\|^2 (1+ \frac{2{\alpha} \beta}{3 \kappa} )^{-1}, 
        $
        where $\calU_2 = \tilde{\bigO}(\max\{\frac{\Upsilon^2}{\kappa^2} + \kappa, \Upsilon^2\}) = \tilde{\bigO}(\Upsilon^2 + \kappa)$. 
        \item \textbf{Initial superlinear phase:} If $ \ \calU_2 \leq t < \calU_3 $, then 
     $  \|\vx_{t+1}-\vx^*\| \leq 
            C'\theta^{(1)}_t \|\vx_{t}-\vx^*\|$,
        where $\theta_t^{(1)} \!=\! \frac{5 \kappa w(\calJ)}{\alpha\sqrt{2\beta}w(t)} = \tilde{\bigO}\big( \nicefrac{\kappa(\Upsilon^2+\kappa)^{\log(\Upsilon^2+\kappa)}}{t^{\log t}}\big)$ with $\calJ$ defined in \eqref{eq:def_J} and $\calU_3\! =\! \tilde{O}(\Upsilon^2 \!+\!\kappa)$. 
        \item \textbf{Final superlinear phase:} Finally, if $t \geq \calU_3 $, then $
            \|\vx_{t+1}-\vx^*\| \leq 
            C'\theta^{(2)}_t \|\vx_{t}-\vx^*\|$,
        where $\theta_t^{(2)} \!=\! \frac{8 \sqrt{2} \Upsilon }{\alpha\sqrt{\beta}}\sqrt{\frac{w'(t)\log(d\frac{t+1}{\delta})}{w(t)}} \!=\! \bigO\left(\frac{\Upsilon \log (t)}{\sqrt{t}}\right)$.
    \end{enumerate}
\end{theorem}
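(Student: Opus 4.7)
The plan is to mirror the four-phase structure of Theorem~\ref{thm:uniform_main_thm}, replacing the uniform-averaging bounds on the stochastic error and the bias with weighted-averaging counterparts, and then specializing to $w(t)=(t+1)^{\log(t+4)}$. For every phase the pipeline is the same: bound the normalized linearization error $\mathcal{E}_t$ from \eqref{eq:error_decomp}, convert that bound into a lower bound on $\eta_t$ via Lemma~\ref{lem:step_size_bnd}, and then feed $\eta_t$ into Proposition~\ref{prop:HPE} to obtain the per-iteration contraction $(1+2\eta_t\mu)^{-1/2}$. Each transition point $\calU_k$ is defined as the first iteration at which the relevant contributions to $\mathcal{E}_t$ have shrunk enough to unlock the next step-size regime.

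The two new ingredients are weighted analogues of Lemmas~\ref{lem:stochastic_error} and~\ref{lem:bias}. With weights $z_{i,t}=(w_i-w_{i-1})/w_t$, a matrix Freedman/Bernstein inequality for sub-exponential martingale differences, applied as in \cite{na2022hessian}, gives with high probability
\[
\|\bar{\mE}_t\| \;\leq\; 8\,\Upsilon_E \sqrt{\tfrac{w'(t)}{w(t)}\,\log\tfrac{d(t+1)}{\delta}},
\]
because Assumption~\ref{assum:w} yields $\sum_{i}z_{i,t}^2\leq z_{t,t}\cdot\sum_i z_{i,t}=z_{t,t}\leq \Psi\, w'(t)/w(t)$. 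For the bias, the triangle inequality gives $\|\mH_t-\bar\mH_t\|\leq \sum_{i=0}^t z_{i,t}\|\mH_t-\mH_i\|$; splitting the sum at an index $\calJ$ to be chosen later, the stale portion contributes at most $M_1\,w(\calJ)/w(t)$ (since $\sum_{i\leq\calJ}z_{i,t}=w(\calJ)/w(t)$ and $\|\mH_t-\mH_i\|\leq M_1$), while the recent portion is controlled via $\|\mH_t-\mH_i\|\leq 2L_2\max\{\|\vx_t-\vx^*\|,\|\vx_i-\vx^*\|\}$ once the iterates have entered a local region.

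Combining these with Lemma~\ref{lem:linear_approx} produces
\[
\mathcal{E}_t \;\leq\; \tfrac{L_2\|\vx_t-\vx^*\|}{2\beta\sqrt{1-\alpha^2}} \;+\; \tfrac{M_1 w(\calJ)}{w(t)} \;+\; 8\,\Upsilon_E\sqrt{\tfrac{w'(t)\log(d(t+1)/\delta)}{w(t)}},
\]
up to a recent-bias term that is negligible once the linear phase is entered. Lemma~\ref{lem:step_size_bnd} turns this into $\eta_t \gtrsim \min\{1/\mathcal{E}_t,\,\mu/\mathcal{E}_t^2\}$, and Proposition~\ref{prop:HPE} converts that into the desired contraction. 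The warm-up phase only requires the averaged noise to be $\bigO(\mu)$ so that $\tilde\mH_t$ is a meaningful preconditioner; solving $\Upsilon_E\sqrt{w'(t)/w(t)}\lesssim\mu$ for the chosen $w$ yields $\calU_1=\tilde{\bigO}(\Upsilon^2/\kappa^2)$. The linear phase is entered as soon as $\mathcal{E}_t = \bigO(M_1)$, which also requires the bias term $M_1 w(\calJ)/w(t)$ with $\calJ=\calU_1$ to be $\bigO(M_1)$; the super-polynomial growth of $w$ drives $\calU_2$ down to $\tilde{\bigO}(\Upsilon^2+\kappa)$, a factor of $\kappa$ smaller than in the uniform case.

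The sharpest part of the argument is choosing $\calJ$ for the two superlinear phases. One picks $\calJ$ as the first iteration at which the linear-phase contraction yields $\|\vx_\calJ-\vx^*\|=\bigO(1/\kappa)$; the recent-bias contribution is then negligible and $\mathcal{E}_t$ is dominated by $M_1 w(\calJ)/w(t)$, giving $\theta_t^{(1)}=\tilde{\bigO}(\kappa w(\calJ)/w(t))$. Plugging in $w(t)=(t+1)^{\log(t+4)}$ together with $\calJ=\tilde{\bigO}(\Upsilon^2+\kappa)$ then produces the $t^{-\log t}$ expression stated in the theorem. Once $t\geq \calU_3$, the averaged stochastic error dominates $\mathcal{E}_t$, and substituting the concentration bound into Lemma~\ref{lem:step_size_bnd} gives $\theta_t^{(2)}=\bigO(\Upsilon \log t/\sqrt{t})$. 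The main obstacle will be matching constants across phases so that the successive transition points $\calU_1,\calU_2,\calU_3$ line up cleanly with the chosen $\calJ$ and the specific weight sequence; conceptually no new phenomena arise beyond those already handled in the uniform-averaging proof of Theorem~\ref{thm:uniform_main_thm}.
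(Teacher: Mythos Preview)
Your approach is essentially that of the paper: replace Lemmas~\ref{lem:stochastic_error} and~\ref{lem:bias} by their weighted analogues, split the bias at an index $\calJ$, and then re-run the four-phase induction on $1/\sqrt{\mu\eta_t}$ exactly as in the uniform case. One concrete slip: the warm-up phase ends when the averaged noise drops to $\bigO(M_1)$, not $\bigO(\mu)$. Solving $\Upsilon_E\sqrt{w'(t)/w(t)}\lesssim \mu$ gives $t=\tilde\bigO(\Upsilon_E^2/\mu^2)=\tilde\bigO(\Upsilon^2)$, not the claimed $\tilde\bigO(\Upsilon^2/\kappa^2)$; the latter comes precisely from requiring $\mathcal{E}_t=\bigO(M_1)$ so that $\eta_t\gtrsim 1/M_1$ and Proposition~\ref{prop:HPE} delivers the contraction factor $(1+2\alpha\beta/(3\kappa))^{-1}$. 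The $\bigO(\mu)$ threshold on the noise is relevant, but only later, when defining $\calJ$: in the paper $\calJ$ is the maximum of (i) the iteration at which the iterate enters the local region $\|\vx_t-\vx^*\|\leq \nu\mu/L_2$ (not $\bigO(1/\kappa)$ as you wrote) and (ii) the iteration at which $\|\bar\mE_t\|\leq \nu\mu$. Both conditions are needed so that all three terms in your displayed bound on $\mathcal{E}_t$ are $\bigO(\mu)$ for $t\geq\calJ$, which is what makes the superlinear induction close. With these two fixes your outline matches the paper's proof.
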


\vspace{-1mm}
In the weighted averaging case, similar to the uniform averaging scenario, we observe four distinct phases of convergence. The warm-up phase for SNPE, during which the distance to the optimal solution does not increase, has the same duration as in the uniform averaging case but is shorter than the warm-up phase for the stochastic Newton method in \citep{na2022hessian} by a factor of $1/\kappa^2$. The linear convergence rates of both uniform and weighted Hessian averaging methods are $1-\kappa^{-1}$, improving over the $1-\kappa^{-2}$ rate achieved by the stochastic Newton method in \citep{na2022hessian}. The number of iterations to reach the initial superlinear phase is $\tilde{\bigO}(\Upsilon^2+\kappa)$, smaller than the $\tilde{\bigO}{(\kappa^2)}$ needed for uniform averaging in SNPE when we focus on the regime where $\Upsilon = \mathcal{O}(\sqrt{\kappa})$. The non-uniform averaging method in \citep{na2022hessian} requires $\kappa^2$ iterations to achieve the initial superlinear phase, whereas the non-uniform SNPE achieves an initial superlinear rate of $\mathcal{O}(\kappa^{\log(\kappa)+1}/t)^t$, improving over the  rate of $\mathcal{O}(\kappa^{4\log(\kappa)+1}/{t^{\log (t)}})^t$ in \citep{na2022hessian}. Finally, while the ultimate superlinear rates in all cases are comparable at approximately $\tilde{\mathcal{O}}((1/\sqrt{t})^t)$, the non-uniform version of SNPE requires $\tilde{\bigO}(\Upsilon^2+\kappa)$ iterations to attain this fast rate, whereas \citep{na2022hessian}'s non-uniform version requires $\tilde{\bigO}{(\kappa^2)}$ iterations, which is less favorable.

\begin{remark}
As discussed in \cite[Example 1]{na2022hessian}, with a subsampling size of $s$, we have $\Upsilon = \tilde{\bigO}({\kappa}/{s})$ for subsampled Newton. %
This implies that when $s = \tilde{\Omega}(\sqrt{\kappa})$, %
we achieve $\Upsilon = \bigO(\sqrt{\kappa})$. 
\end{remark}

\section{Discussion and complexity comparison}\label{sec:discussion}

In this section, we compare the complexity of our method with accelerated gradient descent (AGD), damped Newton, and stochastic Newton \cite{na2022hessian} methods. The iteration complexities of these methods are summarized in Table~\ref{tab:iteration_complexity}, which we use to establish their overall complexities. 
Note that since both stochastic Newton and our proposed SNPE method achieve superlinear convergence, their complexities depend on the target accuracy $\epsilon$ in the form $\bigO(\frac{\log(1/\epsilon)}{\log\log(1/\epsilon)})$, which is provably better than the complexity of AGD by at least a factor of $\log\log(\epsilon^{-1})$. Further details are provided in Appendix~\ref{appen:complexity}.
To better compare them, we focus on the finite-sum problem with $n$ functions: $\min_{x\in \mathbb{R}^d}\sum_{i=1}^n f_i(x)$. Let $\epsilon$ be the target accuracy, $\kappa$ the condition number, and $\Upsilon$ the noise level in Assumption~\ref{assum:subexp_noise}. In this case, computing the exact gradient and Hessian costs $\bigO(nd)$ and $\bigO(nd^2)$, respectively. Thus, the per-iteration cost for AGD is $\bigO(nd)$.
Each iteration of damped Newton's method requires computing the full Hessian and solving a linear system, resulting in a total per-iteration cost of $\bigO(nd^2 + d^3)$. For both stochastic Newton in \cite{na2022hessian} and our SNPE method, the per-iteration cost depends on how the stochastic Hessian is constructed. For example, Subsampled Newton constructs the Hessian estimate with a cost of $\bigO(s d^2)$, where $s$ denotes the sample size. Newton Sketch has a similar computation cost (see \cite{pilanci2017newton}). Additionally, it takes $\bigO(nd)$ to compute the full gradient and $\bigO(d^3)$ to solve the linear system. Since the sample/sketch size $s$ is typically chosen as $s = \bigO(d)$, the total per-iteration cost is $\bigO(nd + d^3)$.

\vspace{-1mm}

\begin{itemize}[left=3pt,itemsep=0pt]
    \item Compared to AGD, SNPE achieves better iteration complexity. Specifically, when the noise level $\Upsilon$ and target accuracy $\epsilon$ are relatively small ($\Upsilon = \bigO(\sqrt{\kappa})$ and $\log \frac{1}{\epsilon} = \Omega(\sqrt{\kappa})$), SNPE converges in fewer iterations. Additionally, when $n \geq d^2$ (indicating many samples), the per-iteration costs of both methods are $\bigO(nd)$, giving our method a better overall complexity.

    \item Compared to damped Newton's method, our method's iteration complexity depends better on the condition number $\kappa$, while damped Newton's depends better on $\epsilon$. However, when $n \geq d^2$, the per-iteration cost of damped Newton is $\bigO(nd^2)$, significantly more than our method's $\bigO(nd)$.

    \item Compared to the stochastic Newton method in \cite{na2022hessian}, the per-iteration costs of both methods are similar. However, Table~\ref{tab:iteration_complexity} shows that our iteration complexity is strictly better. Specifically, when the noise level is relatively small compared to the condition number, i.e., $\Upsilon = \bigO(\sqrt{\kappa})$, the complexity of SNPE improves by an additional factor of $\kappa$ over the stochastic Newton method.
\end{itemize}

\begin{table}[t!]\small
\renewcommand{\arraystretch}{1.0}
\caption{Comparisons in terms of overall iteration complexity to find an $\epsilon$-accurate solution.}\label{tab:iteration_complexity}
\centering
\begin{tabular}{l l l l l}
\toprule
{Methods} & {\!\!AGD} &  \!\! Damped Newton  & \!\!Stochastic Newton~\cite{na2022hessian} & \!\!SNPE (\textbf{Ours}) \\ 
\midrule
Iteration  & \!\!$\bigO(\sqrt{\kappa} \log({1}/{\epsilon}))$   & \!\!$\bigO(\kappa^2 \!+ \!\log\log(1/\epsilon))$ & \!\!$\bigO(\Upsilon^2\! +\! \kappa^2 \!+\! {\frac{\log(1/\epsilon)}{\log\log(1/\epsilon)}})$ & \!\!$\bigO(\Upsilon^2 \!+\! \kappa \!+\! {\frac{\log(1/\epsilon)}{\log\log(1/\epsilon)}})$\!\!\\ 
\bottomrule
\end{tabular}
\end{table}

\vspace{-0.5mm}
\section{Numerical experiments}\label{sec:numerical}
\vspace{-0.5mm}

While our focus is on improving theoretical guarantees, we also provide simple experiments to showcase our method's improvements. All simulations are implemented on a Windows PC with an AMD processor and 16GB Memory. We consider minimizing the regularized log-sum-exp objective $f(x) = \rho \log (\sum_{i=1}^n \exp(\frac{\va_i^\top \vx-b_i}{\rho})) + \frac{\lambda}{2}\|\vx\|^2$, a common test function for second-order methods~\cite{mishchenko2023regularized,doikov2024super} due to its high ill-conditioning. Here, $\lambda>0$ is a regularization parameter, $\rho>0$ is a smoothing parameter, and the entries of the vectors $\va_1,\dots, \va_n\in \reals^d$ and $\vb \in \reals^d$ are randomly generated from the standard normal distribution and the uniform distribution over $[0,1]$, respectively. 
In our experiments, the regularization parameter $\lambda$ is $10^{-3}$, the dimension $d$ is 500, and the number of samples $n$ is chosen from 50,000, 10,000, and 150,000, respectively.

We compare our SNPE method with the stochastic Newton method in \cite{na2022hessian}, using both uniform Hessian averaging (Section~\ref{sec:uniform_averaging}) and weighted averaging (Section~\ref{sec:weighted_averaging}).  In addition, we evaluate it against accelerated gradient descent (AGD), damped Newton's method, and Newton Proximal Extragradient (NPE), which corresponds to our SNPE method with the exact Hessian. For the stochastic Hessian estimate, we use a subsampling strategy with a subsampling size of $s = 500$. Empirically, we found that the extragradient step (Line~\ref{line:extragradient} in Algorithm~\ref{alg:stochastic_NPE}) tends to slow down convergence. To address this, we consider a variant of our method without the extragradient step, modifying Line~\ref{line:extragradient} to $\vx_{k+1}=\hat{\vx}_k$. In Figure~\ref{fig:comparison} of the Appendix, we further explore the effect of the extragradient step. We also note that similar observations were made in \cite{carmon2022optimal}, where the simple iteration $\mathbf{x}_{t+1}=\mathbf{x}_t-\eta_t(\mathbf{I}+\eta_t \mathbf{H}_t)^{-1}\mathbf{g}_t$ outperformed ``accelerated'' second-order methods. From Figures~\ref{fig:iteration} and~\ref{fig:time}, we have the following observations: 

 \textbf{Comparison with stochastic Newton.} In all cases, our SNPE method outperforms stochastic Newton in both the number of iterations and runtime, due to the problem's highly ill-conditioned nature and our method's better dependence on the condition number.
 
 \textbf{Comparison with AGD.} From Figure~\ref{fig:iteration}, we observe that our SNPE method, with either uniform or weighted averaging, requires far fewer iterations to converge than AGD due to the use of second-order information. Consequently, while SNPE has a higher per-iteration cost than AGD,  it converges faster overall in terms of runtime, as demonstrated in Figure~\ref{fig:time}.

\textbf{Comparison with the damped Newton's method and NPE}. As expected, since both damped Newton and NPE use exact Hessian, Figure~\ref{fig:iteration} shows that they exhibit superlinear convergence and converge in fewer iterations than the other algorithms. However, since the exact Hessian matrix is expensive to compute, they incur a high per-iteration computational cost and overall take more time than our proposed SNPE method to converge (see Figure~\ref{fig:time}). Moreover, the gap between these two methods and SNPE widens as the number of samples~$n$ increases, demonstrating the advantage of our method in the large data regime.

\begin{figure}
  \centering
  \subfloat[$n = 50,000$, $d = 500$]{\includegraphics[width=0.31\textwidth]{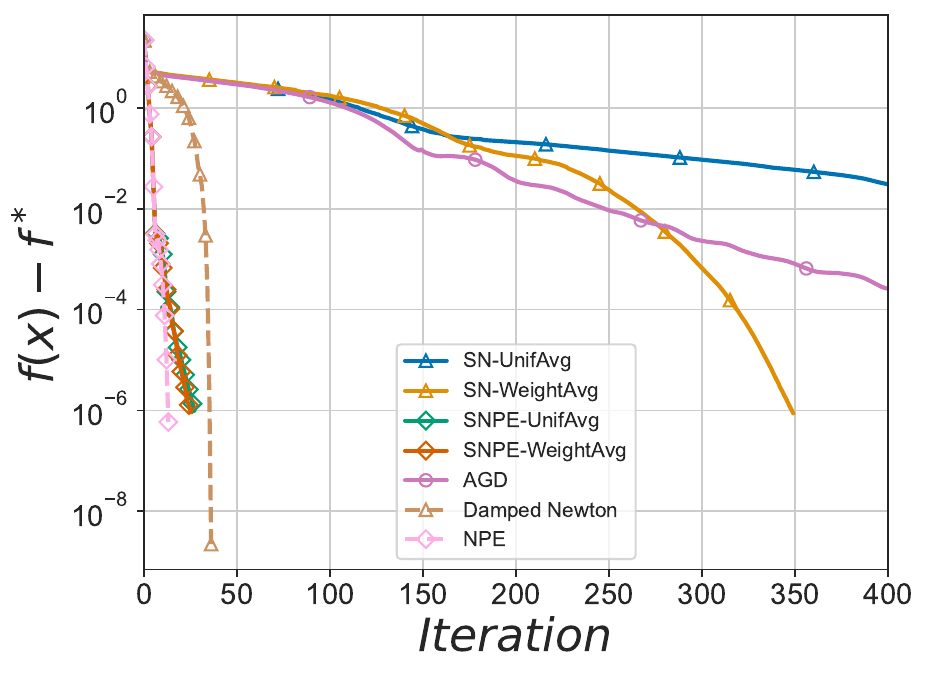}}
  \quad
  \subfloat[$n = 100,000$, $d = 500$]{\includegraphics[width=0.31\textwidth]{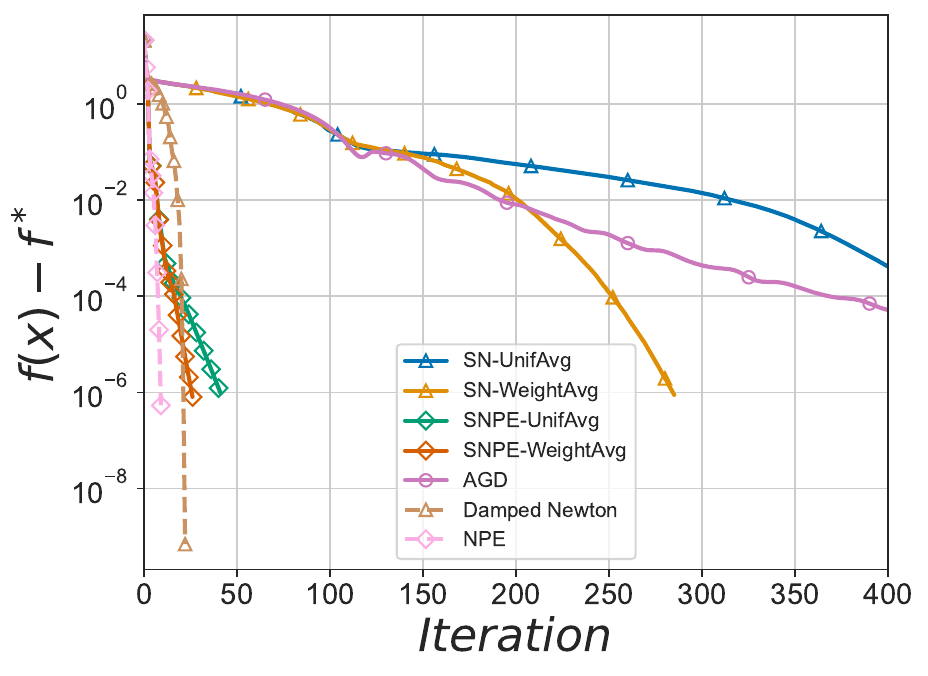}}
  \quad
  \subfloat[$n = 150,000$, $d = 500$]{\includegraphics[width=0.31\textwidth]{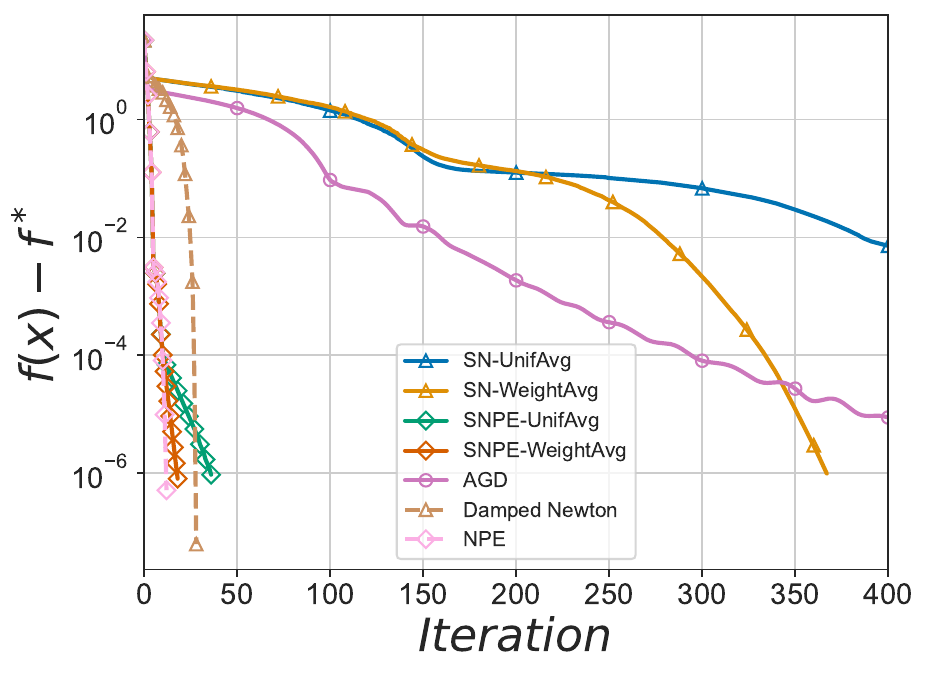}}
  
  \caption{Iteration complexity comparison for minimizing log-sum-exp on a synthetic dataset.}
  \label{fig:iteration}
\end{figure}
\vspace{-2mm}
\begin{figure}
  \centering

  \subfloat[$n = 50,000$, $d = 500$]{\includegraphics[width=0.31\textwidth]{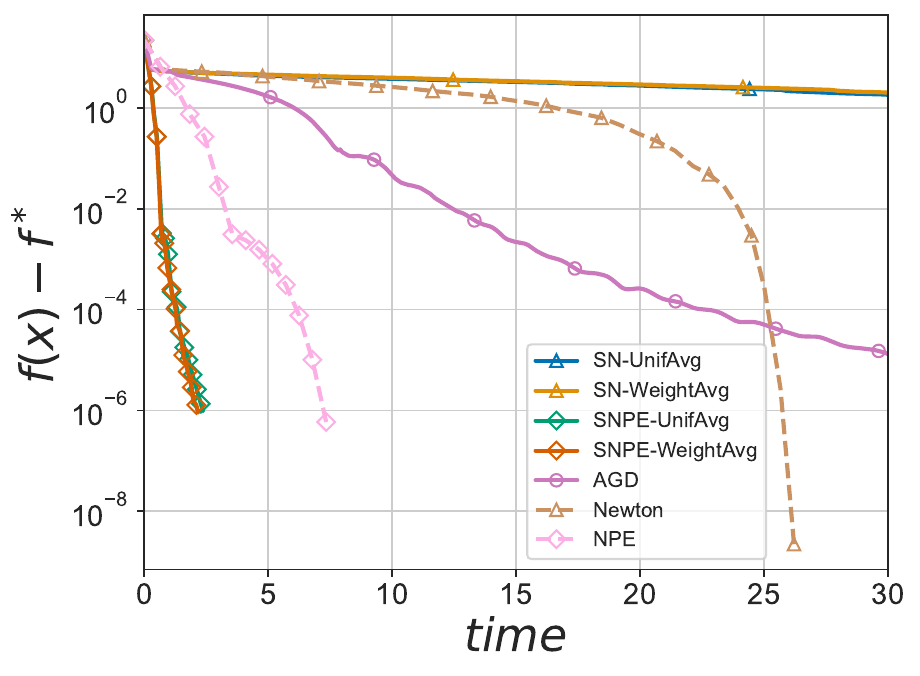}}
  \quad
  \subfloat[$n = 100,000$, $d = 500$]{\includegraphics[width=0.31\textwidth]{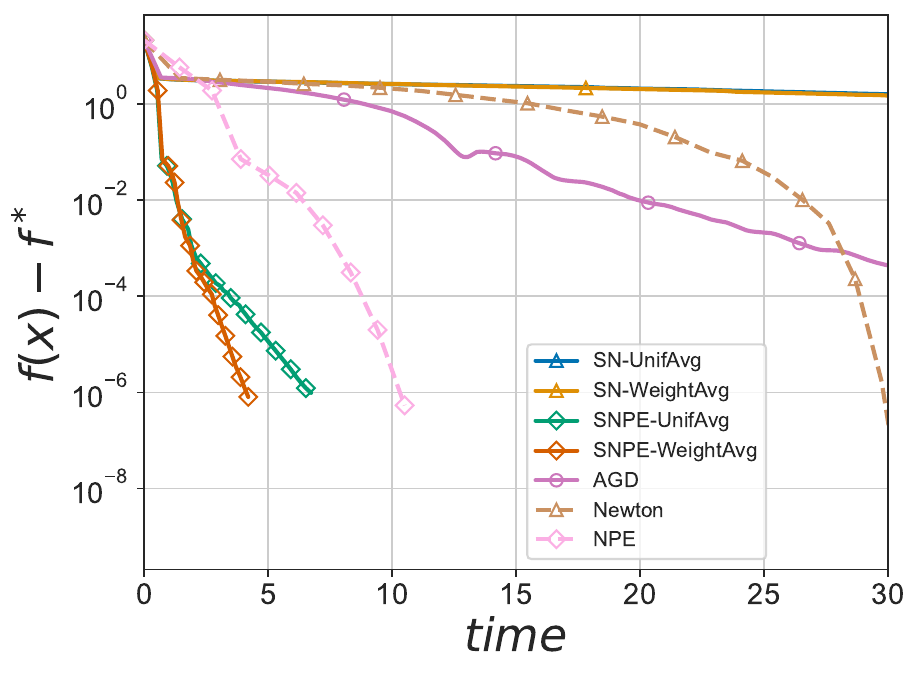}}
  \quad
  \subfloat[$n = 150,000$, $d = 500$]{\includegraphics[width=0.31\textwidth]{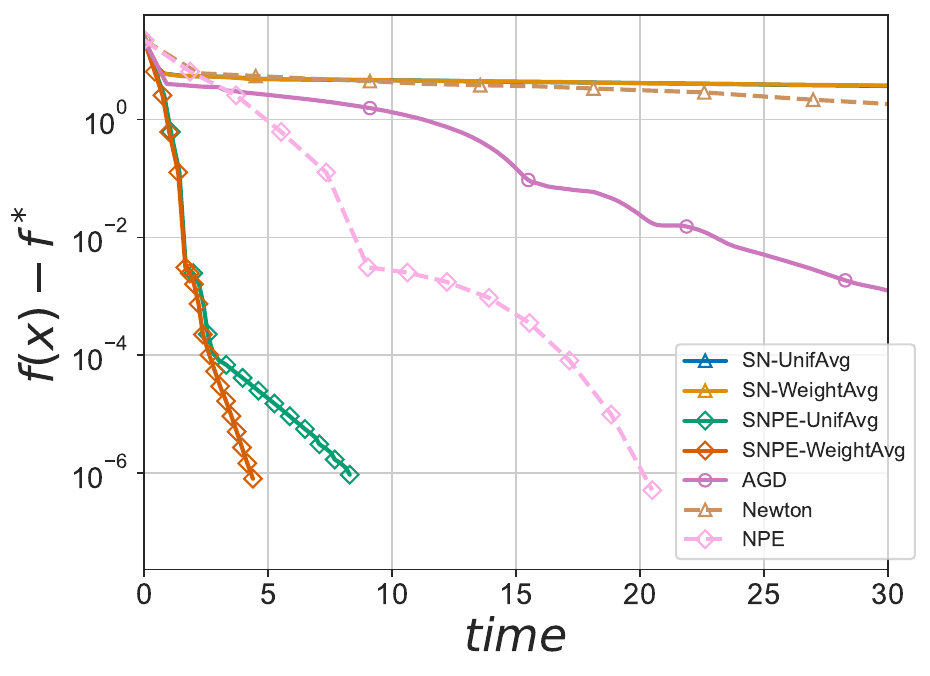}}
  
  \caption{Runtime comparison for minimizing log-sum-exp on a synthetic dataset.}
  \label{fig:time}
\end{figure}

\section{Conclusions and limitations}\label{sec:conclusion}
\vspace{-1mm}
We introduced a stochastic variant of the Newton Proximal Extragradient method (SNPE) for minimizing a strongly convex and smooth function with access to a noisy Hessian. Our contributions include establishing convergence guarantees under two Hessian averaging schemes: uniform and non-uniform. We characterized the computational complexity in both cases and demonstrated that SNPE outperforms the best-known results for the considered problem. %
A limitation of our theory is the assumption of strong convexity. Extending the theory to the convex setting would make it more general. This extension is left for future work due to space limitations.

\section*{Acknowledgments}
The research of R. Jiang and A. Mokhtari is supported in part by the NSF Grant 2007668, the NSF AI Institute for Foundations of Machine Learning (IFML), and the Wireless Networking and Communications Group (WNCG) Industrial Affiliates Program at UT Austin. The research of M.~Derezi\'nski was partially supported by NSF grant CCF-2338655.

\bibliographystyle{unsrt}
\bibliography{ref}

\newpage
\appendix

\section*{Appendix}

\section{Missing proofs in Section~\ref{sec:stochastic_NPE}}
\subsection{Proof of Proposition~\ref{prop:HPE}}
\label{appen:HPE}

In this section, we prove Proposition~\ref{prop:HPE}. In fact, using the same proof, we can also show an additional result that upper bounds $\|\vx_t - \hat{\vx}_t\|$, which will useful in the proof of Lemma~\ref{lem:linear_approx}. Therefore, we present the full version below for completeness. 

\begin{proposition}[Full version of Proposition~\ref{prop:HPE}]
\label{prop:HPE_full}
    Let $\{\vx_t\}_{t\geq 0}$ and $\{\hat{\vx}_t\}_{t\geq 0}$ be the iterates generated by Algorithm~\ref{alg:stochastic_NPE}. Then for any $t \geq 0$, we have $\|\vx_{k+1}-\vx^*\|^2 \leq  \|\vx_k-\vx^*\|^2 (1+2\eta_k\mu)^{-1}$ and $\|\vx_t-\hat{\vx}_t\| \leq {\frac{1}{\sqrt{1-\alpha^2}}}\|\vx_t-\vx^*\|$. 
\end{proposition}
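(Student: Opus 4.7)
The plan is to prove both bounds simultaneously from a single Lyapunov-type calculation. Define the residual $\ve_t := \hat{\vx}_t - \vx_t + \eta_t \nabla f(\hat{\vx}_t)$, so the inexact proximal condition~\eqref{eq:HPE_1} reads $\|\ve_t\|^2 \le \alpha^2 \gamma_t \|\hat{\vx}_t-\vx_t\|^2$. First I would substitute $\eta_t \nabla f(\hat{\vx}_t) = \ve_t + \vx_t - \hat{\vx}_t$ into the extragradient step~\eqref{eq:HPE_2} and simplify: this should collapse to the clean identity $\vx_{t+1} = \hat{\vx}_t - \ve_t/\gamma_t$, so that $\vx_{t+1}-\vx^* = (\hat{\vx}_t - \vx^*) - \ve_t/\gamma_t$.

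Next I would expand the squared norm
\[
\|\vx_{t+1}-\vx^*\|^2 = \|\hat{\vx}_t-\vx^*\|^2 - \tfrac{2}{\gamma_t}\langle \ve_t, \hat{\vx}_t - \vx^*\rangle + \tfrac{1}{\gamma_t^2}\|\ve_t\|^2,
\]
and control the inner product. Writing $\ve_t = (\hat{\vx}_t - \vx_t) + \eta_t\nabla f(\hat{\vx}_t)$, the polarization identity handles the first piece, and $\mu$-strong convexity combined with $\nabla f(\vx^*)=0$ gives $\langle \nabla f(\hat{\vx}_t),\hat{\vx}_t-\vx^*\rangle \ge \mu \|\hat{\vx}_t-\vx^*\|^2$ (this is the standard consequence of adding the two strong-convexity inequalities between $\hat{\vx}_t$ and $\vx^*$). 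Putting these together and using $\gamma_t = 1+2\eta_t\mu$, the $\|\hat{\vx}_t-\vx^*\|^2$ terms cancel and I expect to arrive at
\[
\|\vx_{t+1}-\vx^*\|^2 \le \tfrac{1}{\gamma_t}\bigl(\|\vx_t-\vx^*\|^2 - \|\hat{\vx}_t-\vx_t\|^2\bigr) + \tfrac{1}{\gamma_t^2}\|\ve_t\|^2.
\]

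Finally, I would bound $\|\ve_t\|^2 \le \alpha^2\gamma_t\|\hat{\vx}_t-\vx_t\|^2$ to obtain
\[
\|\vx_{t+1}-\vx^*\|^2 \le \tfrac{1}{\gamma_t}\|\vx_t-\vx^*\|^2 - \tfrac{1-\alpha^2}{\gamma_t}\|\hat{\vx}_t-\vx_t\|^2.
\]
The first claim of the proposition follows by dropping the nonnegative term $-\frac{1-\alpha^2}{\gamma_t}\|\hat{\vx}_t-\vx_t\|^2$. For the second claim, since the left-hand side is nonnegative, rearranging gives $(1-\alpha^2)\|\hat{\vx}_t-\vx_t\|^2 \le \|\vx_t-\vx^*\|^2$, which is precisely the advertised bound on $\|\vx_t-\hat{\vx}_t\|$.

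The main obstacle is algebraic rather than conceptual: carrying out the cancellation between the $\gamma_t\|\hat{\vx}_t-\vx^*\|^2$ term arising from strong convexity plus polarization and the $\|\hat{\vx}_t-\vx^*\|^2$ term from the initial expansion, and checking that the coefficient of $\|\hat{\vx}_t-\vx_t\|^2$ ends up as the promised $(1-\alpha^2)/\gamma_t$ after using the prox tolerance. Once this single inequality is established, both conclusions fall out essentially for free, which is why the two statements naturally belong in a unified proposition.
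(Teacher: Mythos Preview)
Your proof is correct and arrives at exactly the same key inequality as the paper, namely
\[
\gamma_t\|\vx_{t+1}-\vx^*\|^2 \le \|\vx_t-\vx^*\|^2 - (1-\alpha^2)\|\hat{\vx}_t-\vx_t\|^2,
\]
but you get there by a genuinely different and more direct route. The paper never observes the clean identity $\vx_{t+1}=\hat{\vx}_t-\ve_t/\gamma_t$; instead it works variationally with $\eta_t\langle\nabla f(\hat{\vx}_t),\hat{\vx}_t-\vx\rangle$ for a generic $\vx$, applies the three-point equality twice (once with $\vx=\vx_{t+1}$ and once with $\vx=\vx^*$), and handles the residual via Cauchy--Schwarz plus Young's inequality rather than keeping $\|\ve_t\|^2/\gamma_t^2$ exact until the end. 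Your approach is shorter and makes the role of the extragradient weights $1/\gamma_t$ completely transparent; the paper's approach is closer in spirit to the classical HPE analyses (e.g., \cite{monteiro2010complexity,jiang2023online}) and would extend more readily if one wanted, say, a bound on $f(\hat{\vx}_t)-f(\vx^*)$ or to handle a composite objective, since the intermediate inequality~\eqref{eq:intermediate_1} holds for arbitrary $\vx$.
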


\begin{proof}
Our proof is inspired by the approach in \citep[Proposition 1]{jiang2023online}. %
For any $\vx\in \reals^d$, we first write 
\begin{equation}\label{eq:inexact_pp_decomp}
  \eta_t \langle \nabla f(\hat{\vx}_t), \hat{\vx}_t-\vx \rangle =  \langle \hat{\vx}_t-\vx_t+\eta_t\nabla f(\hat{\vx}_t), \hat{\vx}_t-\vx \rangle
  +\langle \vx_t-\hat{\vx}_t, \hat{\vx}_t-\vx\rangle. %
\end{equation}
To begin with, we bound the first term in \eqref{eq:inexact_pp_decomp} by 
\begin{align}
  \langle \hat{\vx}_t-\vx_t+\eta_t\nabla f(\hat{\vx}_t), \hat{\vx}_t-\vx \rangle &\leq \|\hat{\vx}_t-\vx_t+\eta_t\nabla f(\hat{\vx}_t)\|\|\hat{\vx}_t-\vx\| \nonumber\\
  &\leq  \alpha\sqrt{1+2\eta_t\mu}\|\hat{\vx}_t-\vx_t\|\|\hat{\vx}_t-\vx\| \nonumber\\
  &\leq \frac{\alpha^2}{2}\|\hat{\vx}_t-\vx_t\|^2+\frac{1+2\eta_t\mu}{2}\|\hat{\vx}_t-\vx\|^2,\label{eq:triangle_ipp}
\end{align}
where the first inequality is due to Cauchy-Schwarz inequality, the second inequality is due to the condition in \eqref{eq:HPE_1}, and the last inequality is due to Young's inequality. 
Moreover, for the second term in \eqref{eq:inexact_pp_decomp}, we use the three-point equality to get 
\begin{equation}\label{eq:three_point}
  \langle \vx_t-\hat{\vx}_t, \hat{\vx}_t-\vx\rangle = \frac{1}{2}\|\vx_t-\vx\|^2-\frac{1}{2}\|\vx_t-\hat{\vx}_t\|^2-\frac{1}{2}\|\hat{\vx}_t-\vx\|^2.
\end{equation}
By combining \eqref{eq:inexact_pp_decomp}, \eqref{eq:triangle_ipp} and \eqref{eq:three_point}, we obtain that  
\begin{equation}\label{eq:intermediate_1}
  \eta_t \langle \nabla f(\hat{\vx}_t), \hat{\vx}_t-\vx \rangle \leq \frac{1}{2}\|\vx_t-\vx\|^2-\frac{1-\alpha^2}{2}\|\vx_t-\hat{\vx}_t\|^2 + \eta_t \mu \|\hat{\vx}_t-\vx\|^2.
\end{equation}
Moreover, it follows from the update rule in \eqref{eq:HPE_2} that $\eta_t \nabla f(\hat{\vx}_t) = \vx_t-\vx_{t+1}+2\eta_t\mu (\hat{\vx}_t-\vx_{t+1})$. This implies that, for any $\vx\in \reals^d$, 
\begin{align}
  & \phantom{{}={}}\eta_t \langle \nabla f(\hat{\vx}_t), \vx_{t+1}-\vx \rangle \\
  &= \langle \vx_t-\vx_{t+1}, \vx_{t+1}-\vx \rangle+2\eta_t\mu \langle \hat{\vx}_t-\vx_{t+1}, \vx_{t+1}-\vx\rangle \nonumber\\
  & = \frac{\|\vx_{t}-\vx\|^2}{2}-\frac{\|\vx_t-\vx_{t+1}\|^2}{2}-\frac{1+2\eta_t\mu}{2}\|\vx_{t+1}-\vx\|^2 + {\eta_t\mu}\|\hat{\vx}_{t}-\vx\|^2-{\eta_t\mu}\|\hat{\vx}_t-\vx_{t+1}\|^2, \label{eq:intermediate_2}
\end{align}
where we applied the three-point equality twice in the last equality.  
Thus, by combining \eqref{eq:intermediate_1} with $\vx = \vx_{t+1}$ and \eqref{eq:intermediate_2} with $\vx=\vx^*$, we get
\begin{equation}\label{eq:linearized_loss}
  \begin{aligned}
    & \phantom{{}={}}\eta_t \langle \nabla f(\hat{\vx}_t),\hat{\vx}_t - \vx^* \rangle \\ 
    &= \eta_t \langle \nabla f(\hat{\vx}_t),{\vx}_{t+1} - \vx^* \rangle 
    +\eta_t \langle \nabla f(\hat{\vx}_t), \hat{\vx}_{t} - \vx_{t+1} \rangle \\
    &\leq \frac{\|\vx_{t}-\vx^*\|^2}{2}-\bcancel{\frac{\|\vx_t-\vx_{t+1}\|^2}{2}}-\frac{1+2\eta_t\mu}{2}\|\vx_{t+1}-\vx^*\|^2 + {\eta_t\mu}\|\hat{\vx}_{t}-\vx^*\|^2-\bcancel{{\eta_t\mu}\|\hat{\vx}_t-\vx_{t+1}\|^2} \\
    &\phantom{{}={}} + \bcancel{\frac{\|\vx_t-\vx_{t+1}\|^2}{2}}-\frac{1-\alpha^2}{2}\|\vx_t-\hat{\vx}_t\|^2+ \bcancel{\eta_t \mu \|\hat{\vx}_t-\vx_{t+1}\|^2}. %
  \end{aligned}
\end{equation}
Since $\nabla f(\vx^*)=0$ and $f$ is $\mu$-strongly convex, we further have 
\begin{equation}\label{eq:strong_monotone}
\langle \nabla f(\hat{\vx}_t),\hat{\vx}_t - \vx^* \rangle = \langle \nabla f(\hat{\vx}_t)-\nabla f({\vx^*}),\hat{\vx}_t - \vx^* \rangle \geq \mu\|\hat{\vx}_t - \vx^*\|^2.
\end{equation}
Combining \eqref{eq:linearized_loss} and \eqref{eq:strong_monotone} and rearranging the terms, we obtain that 
\begin{equation}
  \frac{1+2\eta_t\mu}{2}\|\vx_{t+1}-\vx^*\|^2 
  \leq \frac{1}{2}\|\vx_{t}-\vx^*\|^2-\frac{1-\alpha^2}{2}\|\vx_t-\hat{\vx}_t\|^2. \label{eq:intermediate_3}
\end{equation}
Since $\alpha<1$, the last term in \eqref{eq:intermediate_3} is negative and we immediately obtain that $\|\vx_{t+1} - \vx^*\|^2 \leq \|\vx_t-\vx^*\|^2(1+2\eta_t \mu)^{-1}$ . Moreover, since $\frac{1+2\eta_t\mu}{2}\|\vx_{t+1}-\vx^*\|^2 \geq 0$, it follows that $\frac{1-\alpha^2}{2}\|\vx_t-\hat{\vx}_t\|^2 \leq \frac{1}{2}\|\vx_{t}-\vx^*\|^2$, which leads to $\|\vx_t-\hat{\vx}_t\| \leq {\frac{1}{\sqrt{1-\alpha^2}}}\|\vx_t-\vx^*\|$. The proof is complete.
\end{proof}

\subsection{Proof of Lemma~\ref{lem:step_size_bnd}}
\label{appen:step_size_bnd}
Recall that in our backtracking line search scheme in Algorithm~\ref{alg:ls}, the step size $\eta_t$ starts from $\sigma_t$ and keeps backtracking until the condition in \eqref{eq:HPE_1} is satisfied. Hence, by the definition of $\calB$, it immediately follows that $\eta_t = \sigma_t$ if $t \notin \calB$. 
Moreover, if $t \in \calB$, then the step size $\tilde{\eta}_t = \eta_t/\beta$ and the corresponding iterate $\tilde{\vx}_t$ must have failed the condition in \eqref{eq:HPE_1} (Otherwise, our line search scheme would have accepted the step size $\tilde{\eta}_t$ instead). This implies that 
\begin{equation}\label{eq:LS_violated}
    \|\tilde{\vx}_t - \vx_t + \tilde{\eta}_t \nabla f(\tilde{\vx}_t) \| >  {\alpha\sqrt{1+2\tilde{\eta}_t \mu}}\|\tilde{\vx}_t - \vx_t\|. 
\end{equation}
Since $\tilde{\vx}_t = \vx_t - \tilde{\eta}_t (\mI + \tilde{\eta}_t \tilde{\mH}_t)^{-1} \nabla f(\vx_t)$, we have
\begin{equation*}
    \tilde{\vx}_t - \vx_t + \tilde{\eta}_t \tilde{\mH}_t(\tilde{\vx}_t - \vx_t) = -\tilde{\eta}_t \nabla f({\vx}_t) \quad \Leftrightarrow \quad \tilde{\vx}_t - \vx_t = -\tilde{\eta}_t \bigl(\nabla f({\vx}_t)+\tilde{\mH}_t(\tilde{\vx}_t - \vx_t)\bigr).
\end{equation*}
and hence the left-hand side in \eqref{eq:LS_violated} equals to $\tilde{\eta}_t\|\nabla f(\tilde{\vx}_t) - \nabla f(\vx_t) - \tilde{\mH}_t(\tilde{\vx}_t - \vx_t)\|$. Thus, we obtain from \eqref{eq:LS_violated} that 
\begin{equation}\label{eq:lower_bound_tilde_eta}
    \tilde{\eta}_t > \frac{\alpha\sqrt{1+2\tilde{\eta}_t \mu }\|\tilde{\vx}_t - \vx_t\|}{\|\nabla f(\tilde{\vx}_t) - \nabla f(\vx_t) - \tilde{\mH}_t(\tilde{\vx}_t - \vx_t)\|}, 
\end{equation}
By substituting $\eta_t = \beta \tilde{\eta}_t$, we further have 
\begin{equation*}
    \eta_t > \frac{\alpha\beta\sqrt{1+2{\eta}_t \mu/\beta}\|\tilde{\vx}_t - \vx_t\|}{\|\nabla f(\tilde{\vx}_t) - \nabla f(\vx_t) - \tilde{\mH}_t(\tilde{\vx}_t - \vx_t)\|}. 
\end{equation*}
Using the fact that $1+2{\eta}_t \mu/\beta \geq 1$, we get 
\begin{equation}\label{eq:step_size_bnd_1}
    \eta_t > \frac{{\alpha }\beta\|\tilde{\vx}_t - \vx_t\|}{\|\nabla f(\tilde{\vx}_t) - \nabla f(\vx_t) - \tilde{\mH}_t(\tilde{\vx}_t - \vx_t)\|}.
\end{equation} 
Moreover, using the fact that $1+2{\eta}_t \mu/\beta \geq 2{\eta}_t \mu/\beta$, we can also conclude that   
\begin{equation}\label{eq:step_size_bnd_2}
    \eta_t > \frac{\alpha\beta\sqrt{2{\eta}_t \mu/\beta}\|\tilde{\vx}_t - \vx_t\|}{\|\nabla f(\tilde{\vx}_t) - \nabla f(\vx_t) - \tilde{\mH}_t(\tilde{\vx}_t - \vx_t)\|} \quad \Rightarrow \quad \eta_t > \frac{2\alpha^2 \beta \mu \|\tilde{\vx}_t - \vx_t\|^2}{\|\nabla f(\tilde{\vx}_t) - \nabla f(\vx_t) - \tilde{\mH}_t(\tilde{\vx}_t - \vx_t)\|^2}.
\end{equation} 
By combining \eqref{eq:step_size_bnd_1} and \eqref{eq:step_size_bnd_2}, we obtain the lower bound in Lemma~\ref{lem:step_size_bnd}. 

Finally, when $\tilde{\mH}_t \succeq 0$, it holds that $\mI + \tilde{\eta}_t \tilde{\mH}_t \succeq \mI + {\eta}_t \tilde{\mH}_t \succeq 0$ and thus $(\mI + {\eta}_t \tilde{\mH}_t)^{-1}  \succeq (\mI + \tilde{\eta}_t \tilde{\mH}_t)^{-1} \succeq 0$. This further implies that $\|(\mI + {\eta}_t \tilde{\mH}_t)^{-1} \nabla f(\vx_t)\| \geq \|(\mI + \tilde{\eta}_t \tilde{\mH}_t)^{-1} \nabla f(\vx_t)\|$. Hence, we can conclude that 
\begin{equation*}
    \|\tilde{\vx}_t - \vx_t\| = \tilde{\eta}_t\|(\mI + \tilde{\eta}_t \tilde{\mH}_t)^{-1} \nabla f(\vx_t)\| \leq \frac{{\eta}_t}{\beta}\|(\mI + {\eta}_t \tilde{\mH}_t)^{-1} \nabla f(\vx_t)\| \leq \frac{1}{\beta} \|\hat{\vx}_t-\vx_t\|.
\end{equation*}
This completes the proof. 

\subsection{Extension  to inexact linear solving}\label{appen:inexact}

{In this section, we extend our convergence results to the case where the linear system in \eqref{eq:regularized_Newton} is solved inexactly, i.e., we find $\hat{\vx}_t$ such that  
\begin{equation}\label{eq:inexact_linear_system}
    \|(\mI+\eta _t \tilde{\mH}_t)(\hat{\vx}_t-\vx_t) + \eta_t \nabla f(\vx_t)\| \leq \frac{\alpha}{2}\|\hat{\vx}_t-\vx_t\|.
\end{equation}
In this case, since the proof of Proposition~\ref{prop:HPE_full} does not rely on the update rule in \eqref{eq:regularized_Newton}, Proposition~\ref{prop:HPE_full} continues to hold. However, we need to modify the proof of Lemma~\ref{lem:step_size_bnd} and replace it by the following lemma. We note that the two results differ only by an absolute constant. 
\begin{lemma}[Extension to Lemma~\ref{lem:step_size_bnd}]\label{lem:step_size_bnd_modified}
    For $t \notin \calB$, we have $\eta_t = \sigma_t$. For $t \in \calB$, let $\tilde{\eta}_t = \eta_t/\beta$ and $\tilde{\vx}_t$ be the corresponding iterate rejected by our line search scheme.
    Then, $\|\tilde{\vx}_t - \vx_t\| \leq \frac{3}{\beta} \|\hat{\vx}_t - \vx_t\|$. Moreover,   
       \begin{equation*}
           \eta_t \geq \max\biggl\{\frac{{\alpha }\beta\|\tilde{\vx}_t - \vx_t\|}{2\|\vg(\tilde{\vx}_t) - \vg(\vx_t) - \tilde{\mH}_t(\tilde{\vx}_t - \vx_t)\|}, 
           \frac{\alpha^2 \beta \mu \|\tilde{\vx}_t - \vx_t\|^2}{2\|\vg(\tilde{\vx}_t) - \vg(\vx_t) - \tilde{\mH}_t(\tilde{\vx}_t - \vx_t)\|^2}
           \biggr\}.
       \end{equation*}
\end{lemma}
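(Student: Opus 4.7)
The plan is to closely follow the proof of Lemma~\ref{lem:step_size_bnd}, isolating the error from the inexact solve as a residual term that propagates controllably through each step. For $t \notin \calB$, the equality $\eta_t=\sigma_t$ still holds by definition of $\calB$. For $t \in \calB$, I would introduce the residual
\begin{equation*}
\vr_t \;:=\; (\mI+\tilde{\eta}_t\tilde{\mH}_t)(\tilde{\vx}_t-\vx_t)+\tilde{\eta}_t\vg(\vx_t),
\end{equation*}
which by \eqref{eq:inexact_linear_system} applied at step size $\tilde{\eta}_t$ is bounded by $\|\vr_t\| \leq \tfrac{\alpha}{2}\|\tilde{\vx}_t-\vx_t\|$. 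Substituting the identity $\tilde{\vx}_t-\vx_t+\tilde{\eta}_t\vg(\tilde{\vx}_t) = \tilde{\eta}_t[\vg(\tilde{\vx}_t)-\vg(\vx_t)-\tilde{\mH}_t(\tilde{\vx}_t-\vx_t)]+\vr_t$ into the rejected line-search inequality and applying the triangle inequality yields
\begin{equation*}
\tilde{\eta}_t\,\|\vg(\tilde{\vx}_t)-\vg(\vx_t)-\tilde{\mH}_t(\tilde{\vx}_t-\vx_t)\| \;>\; \bigl(\alpha\sqrt{1+2\tilde{\eta}_t\mu}-\tfrac{\alpha}{2}\bigr)\|\tilde{\vx}_t-\vx_t\|.
\end{equation*}
Both step-size lower bounds then follow from this single estimate: the first uses $\sqrt{1+2x}-\tfrac{1}{2} \geq \tfrac{1}{2}$, while the second uses the elementary inequality $\sqrt{1+2x}-\tfrac{1}{2} \geq \sqrt{x/2}$ for $x \geq 0$ (straightforward to verify by squaring twice). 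After substituting $\eta_t=\beta\tilde{\eta}_t$, each bound loses only a multiplicative factor of at most $2$ relative to Lemma~\ref{lem:step_size_bnd}, which accounts for the constants in the statement.

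For the comparison $\|\tilde{\vx}_t-\vx_t\| \leq (3/\beta)\|\hat{\vx}_t-\vx_t\|$, my plan is to apply \eqref{eq:inexact_linear_system} to both iterates. Using $\|(\mI+\eta\tilde{\mH}_t)^{-1}\|_{\op} \leq 1$ from Assumption~\ref{assum:PSD_hess} together with the triangle inequality produces the two-sided estimates
\begin{equation*}
\tfrac{2-\alpha}{2}\|\hat{\vx}_t-\vx_t\| \;\leq\; \eta_t\|(\mI+\eta_t\tilde{\mH}_t)^{-1}\vg(\vx_t)\| \;\leq\; \tfrac{2+\alpha}{2}\|\hat{\vx}_t-\vx_t\|,
\end{equation*}
and analogously for $\tilde{\vx}_t$ with $\tilde{\eta}_t$. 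Since $\tilde{\mH}_t \succeq 0$, the operator $(\mI+\eta\tilde{\mH}_t)^{-1}$ commutes with $\tilde{\mH}_t$ and acts as multiplication by $1/(1+\eta\lambda_i)$ on its eigenvectors, so $\|(\mI+\eta\tilde{\mH}_t)^{-1}\vv\|$ is non-increasing in $\eta \geq 0$ for any fixed $\vv$. Chaining these ingredients with $\tilde{\eta}_t=\eta_t/\beta$ gives $\|\tilde{\vx}_t-\vx_t\| \leq \tfrac{2+\alpha}{(2-\alpha)\beta}\|\hat{\vx}_t-\vx_t\|$, and the bound $\tfrac{2+\alpha}{2-\alpha} \leq 3$ for $\alpha \in (0,1)$ delivers the stated inequality.

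The main obstacle is this last chain. In the exact-solve setting of Lemma~\ref{lem:step_size_bnd}, relating $\|\tilde{\vx}_t-\vx_t\|$ and $\|\hat{\vx}_t-\vx_t\|$ was immediate from the identity $\tilde{\vx}_t-\vx_t = -\tilde{\eta}_t(\mI+\tilde{\eta}_t\tilde{\mH}_t)^{-1}\vg(\vx_t)$ and the operator inequality $(\mI+\tilde{\eta}_t\tilde{\mH}_t)^{-1} \preceq (\mI+\eta_t\tilde{\mH}_t)^{-1}$. In the inexact setting the same monotonicity is still available, but we pay the two-sided slack $(2+\alpha)/(2-\alpha)$ arising from the inexact condition; this is precisely the source of the constant $3$ in the statement. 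Aside from this bookkeeping, the remaining modifications amount to tracking a single $\alpha/2$ subtraction through the argument, costing only absolute constants — consistent with the claim that the two lemmas differ only by absolute factors.
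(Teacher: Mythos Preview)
Your proposal is correct and follows essentially the same route as the paper. Both arguments combine the rejected line-search inequality with the inexactness residual via the triangle inequality to obtain a lower bound on $\tilde{\eta}_t$ involving $\sqrt{1+2\tilde{\eta}_t\mu}$, then extract the two claimed bounds by elementary estimates; and both establish the norm comparison by sandwiching the inexact iterates against the exact solutions $\hat{\vx}_t^*,\tilde{\vx}_t^*$ of the linear systems and invoking the operator monotonicity $(\mI+\tilde{\eta}_t\tilde{\mH}_t)^{-1}\preceq(\mI+\eta_t\tilde{\mH}_t)^{-1}$. The only cosmetic differences are that the paper first simplifies $\alpha\sqrt{1+2x}-\tfrac{\alpha}{2}\geq \tfrac{\alpha}{2}\sqrt{1+2x}$ before reusing the Lemma~\ref{lem:step_size_bnd} endgame, whereas you keep the subtraction and apply $\sqrt{1+2x}-\tfrac{1}{2}\geq\sqrt{x/2}$ directly; and the paper bounds the sandwich factors crudely by $1/2$ and $3/2$ while you retain the sharper $(2\pm\alpha)/2$ before bounding $(2+\alpha)/(2-\alpha)\leq 3$.
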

}
\begin{proof}
    We follow a similar argument as in the proof of  Lemma~\ref{lem:step_size_bnd}. If $t \notin \calB$, it immediately follows that $\eta_t = \sigma_t$. Further, if $t\in \calB$, then $\tilde{\eta}_t$ and the corresponding iterate $\tilde{x}_t$ must fail to satisfy the condition in \eqref{eq:HPE_1}. This implies that 
    \begin{equation}\label{eq:LS_violated_modified}
        \|\tilde{\vx}_t - \vx_t + \tilde{\eta}_t \nabla f(\tilde{\vx}_t) \| >  {\alpha\sqrt{1+2\tilde{\eta}_t \mu}}\|\tilde{\vx}_t - \vx_t\|. 
    \end{equation}
Moreover, by our inexactness condition in \eqref{eq:inexact_linear_system}, $\tilde{\vx}_t$ satisfies 
\begin{equation*}
    \|(\mI+\tilde{\eta}_t \tilde{\mH}_t)(\tilde{\vx}_t-\vx_t) + \tilde{\eta}_t \nabla f(\vx_t)\| \leq \frac{\alpha}{2}\|\tilde{\vx}_t-\vx_t\|.
\end{equation*}
Hence, by using triangle inequality, we have 
\begin{align*}
    & \phantom{{}={}}\tilde{\eta}_t\|\nabla f(\tilde{\vx}_t) - \nabla f(\vx_t) - \tilde{\mH}_t(\tilde{\vx}_t - \vx_t)\| \\
    & =  \left\|\left(\tilde{\vx}_t - \vx_t + \tilde{\eta}_t \nabla f(\tilde{\vx}_t)\right) - \left((\mI+\tilde{\eta}_t \tilde{\mH}_t)(\tilde{\vx}_t-\vx_t) + \tilde{\eta}_t \nabla f(\vx_t)\right) \right\| \\
    & \geq \|\tilde{\vx}_t - \vx_t + \tilde{\eta}_t \nabla f(\tilde{\vx}_t)\| - \|(\mI+\tilde{\eta}_t \tilde{\mH}_t)(\tilde{\vx}_t-\vx_t) + \tilde{\eta}_t \nabla f(\vx_t)\| \\
    & \geq \frac{\alpha}{2}{\sqrt{1+2\tilde{\eta}_t \mu}}\|\tilde{\vx}_t - \vx_t\|. 
\end{align*}
Thus, we obtain that 
\begin{equation*}
    \tilde{\eta}_t > \frac{\alpha\sqrt{1+2\tilde{\eta}_t \mu }\|\tilde{\vx}_t - \vx_t\|}{2\|\nabla f(\tilde{\vx}_t) - \nabla f(\vx_t) - \tilde{\mH}_t(\tilde{\vx}_t - \vx_t)\|}, 
\end{equation*}
Combining this with \eqref{eq:lower_bound_tilde_eta}, we observe that these two bounds differ only by a constant factor of 2. Hence, the rest of the proof for the lower bound follows similarly as in Lemma~\ref{lem:step_size_bnd}. 

Next, we prove the inequality $\|\tilde{\vx}_t - \vx_t\| \leq \frac{1}{\beta} \|\hat{\vx}_t - \vx_t\|$. Define $\hat{\vx}^*_t =  \vx_t - {\eta}_t (\mI + {\eta}_t \tilde{\mH}_t)^{-1} \nabla f(\vx_t)$ and $\tilde{\vx}^*_t = \vx_t - \tilde{\eta}_t (\mI + \tilde{\eta}_t \tilde{\mH}_t)^{-1} \nabla f(\vx_t)$, i.e, they are the exact solutions to the corresponding linear systems. By the argument in the proof of Lemma~\ref{lem:step_size_bnd}, we have $\|\tilde{\vx}^*_t - \vx_t\| \leq \frac{1}{\beta}\|\hat{\vx}^*_t-\vx_t\|$.  
In the following, we first prove that 
\begin{equation}\label{eq:relating*}
    \frac{1}{2}\|\hat{\vx}_t - \vx_t\| \leq \|\hat{\vx}^*_t - \vx_t\| \leq \frac{3}{2}\|\hat{\vx}_t - \vx_t\| \quad \text{and} \quad  \frac{1}{2}\|\tilde{\vx}_t - \vx_t\| \leq \|\tilde{\vx}^*_t - \vx_t\| \leq \frac{3}{2}\|\tilde{\vx}_t - \vx_t\|. 
\end{equation}
It suffices to prove the first set of inequalities, since the second one follows similarly.  
To see this, note that the condition in \eqref{eq:inexact_linear_system} can be rewritten as 
\begin{equation*}
    \|(\mI+\eta _t \tilde{\mH}_t)(\hat{\vx}_t-\hat{\vx}^*_t)\| \leq \frac{\alpha}{2}\|\hat{\vx}_t-\vx_t\|. 
\end{equation*} 
Since $\tilde{\mH}_t \succeq 0$, this further implies that $ \|\hat{\vx}_t-\hat{\vx}^*_t\| \leq \|(\mI+\eta _t \tilde{\mH}_t)(\hat{\vx}_t-\hat{\vx}^*_t)\| \leq \frac{\alpha}{2}\|\hat{\vx}_t-\vx_t\|$. Hence, by the triangle inequality, we obtain that 
\begin{align*}
    \|\hat{\vx}^*_t - \vx_t\| \leq \|\hat{\vx}^*_t- \hat{\vx}_t\| + \|\hat{\vx}_t-\vx_t\| \leq \left(1+\frac{\alpha}{2}\right)\|\hat{\vx}_t-\vx_t\| \leq \frac{3}{2}\|\hat{\vx}_t-\vx_t\|, \\
    \|\hat{\vx}^*_t - \vx_t\| \geq \|\hat{\vx}^*_t- \hat{\vx}_t\| - \|\hat{\vx}_t-\vx_t\| \geq \left(1-\frac{\alpha}{2}\right)\|\hat{\vx}_t-\vx_t\| \geq \frac{1}{2} \|\hat{\vx}_t-\vx_t\|,
\end{align*}
which lead to \eqref{eq:relating*}. Finally, we conclude that $\|\tilde{\vx}_t-\vx_t\| \leq 2\|\tilde{\vx}^*_t - \vx_t\| \leq \frac{2}{\beta}\|\hat{\vx}^*_t-\vx_t\| \leq \frac{3}{\beta}\|\hat{\vx}_t-\vx_t\|$. This completes the proof. 
\end{proof}

\subsection{The total complexity of line search}\label{appen:LS}

{
    Let $l_t$ denote the number of line search steps in iteration $t$.
We first note that $\eta_t = \sigma_t \beta^{l_t-1}$ by our line search subroutine, which implies $l_t = \log_{1/\beta}(\sigma_t/\eta_t)+1$. 
Moreover, recall that $\sigma_t=\eta_{t-1}/\beta$ for $t\geq 1$. Hence, the total number of line search steps after $t$ iterations can be bounded by (cf.  \cite[Lemma 22]{jiang2023online}): $$\sum_{i=0}^{t-1} l_i = \sum_{i=0}^{t-1}\left[\log_{1/\beta} \left(\frac{\sigma_i}{\eta_i}\right)+1\right]=2t-1+\log\left(\frac{\sigma_0}{\eta_{t-1}}\right).$$ Moreover, it can be shown that $\eta_{t-1}\geq\alpha\beta/(3M_1)$ when $t=\tilde{\Omega}(\Upsilon^2/\kappa^2)$ in both the uniform averaging and the non-uniform averaging cases (cf. Corollaries~\ref{coro:1/M_1_bound} and~\ref{coro:1/M_1_bound_non}). This implies that the total number of line search steps can be bounded by $2t-1 + \log(3M_1\sigma_0/\alpha\beta)$.}

\subsection{Proof of Lemma~\ref{lem:linear_approx}}
\label{appen:proof_of_lem_linear_approx}
Recall that we use $\vg(\vx)$ and $\mH(\vx)$ to denote the gradient $\nabla f(\vx)$ and the Hessian $\nabla^2 f(\vx)$, respectively. We first consider the inequality in \eqref{eq:linear_approx_bound_1}. By the fundamental theorem of calculus, we can write 
\begin{equation*}
    \nabla f(\tilde{\vx}_t) - \nabla f(\vx_t) = \int_{0}^{1} \nabla^2 f(\vx_t + \tau (\tilde{\vx}_t-\vx_t)) (\tilde{\vx}_t-\vx_t)\; d\tau. 
\end{equation*}
Therefore, we can further use the triangle inequality to get  
    \begin{align}
        &\phantom{{}={}}\|\nabla f(\tilde{\vx}_t) - \nabla f(\vx_t) - \nabla^2 f(\vx_t)(\tilde{\vx}_t - \vx_t)\| \nonumber\\
         &= \left\|\int_{0}^{1} \left(\nabla^2 f(\vx_t + \tau (\tilde{\vx}_t-\vx_t)) - \nabla^2 f(\vx_t)\right) (\tilde{\vx}_t-\vx_t)\; d\tau\right\| \nonumber \\
        &\leq \int_{0}^{1} \left\|\left(\nabla^2 f(\vx_t + \tau (\tilde{\vx}_t-\vx_t)) - \nabla^2 f(\vx_t)\right) (\tilde{\vx}_t-\vx_t)\right\|\; d\tau \nonumber \\
        &\leq \int_{0}^{1} \left\|\nabla^2 f(\vx_t + \tau (\tilde{\vx}_t-\vx_t)) - \nabla^2 f(\vx_t) \right\| \left\|\tilde{\vx}_t-\vx_t\right\|\; d\tau.  \label{eq:intergral}
    \end{align} 
    Moreover, we have
    $\left\|\nabla^2 f(\vx_t + \tau (\tilde{\vx}_t-\vx_t)) - \nabla^2 f(\vx_t) \right\| \leq M_1$ for any $\tau \in [0,1]$ by Assumption~\ref{assum:bounded_H_diff}. Together with \eqref{eq:intergral}, this further implies that $\frac{\|\nabla f(\tilde{\vx}_t) - \nabla f(\vx_t) - \nabla^2 f(\vx_t)(\tilde{\vx}_t - \vx_t)\|}{\|\tilde{\vx}_t-\vx_t\|} \leq M_1$, which proves the first bound in~\eqref{eq:linear_approx_bound_1}.  
    
    Next, we consider the second bound in \eqref{eq:linear_approx_bound_1}. By Assumption~\ref{assum:lips_hess}, it follows from standard arguments that $\|\nabla f(\tilde{\vx}_t) - \nabla f(\vx_t) - \nabla^2 f(\vx_t)(\tilde{\vx}_t - \vx_t)\| \leq \frac{L_2}{2}\|\tilde{\vx}_t-\vx_t\|^2$ (e.g., see \citep[Lemma 1.2.4]{Nesterov2018}). This leads to 
    \begin{equation*}
        \frac{\|\nabla f(\tilde{\vx}_t) - \nabla f(\vx_t) - \nabla^2 f(\vx_t)(\tilde{\vx}_t - \vx_t)\|}{\|\tilde{\vx}_t-\vx_t\|} \leq \frac{L_2 \|\tilde{\vx}_t-\vx_t\|}{2} \leq \frac{L_2\|\hat{\vx}_t-\vx_t\|}{2 \beta} \leq \frac{L_2\|\vx_t-{\vx}^*\|}{2 \beta\sqrt{1-\alpha^2}},
    \end{equation*}
    where we used $\|\tilde{\vx}_t - \vx_t\| \leq \frac{1}{\beta} \|\hat{\vx}_t - \vx_t\|$ from Lemma~\ref{lem:step_size_bnd} and $\|\hat{\vx}_t - \vx_t\| \leq {\frac{1}{\sqrt{1-\alpha^2}}}\|\vx_t-\vx^*\|$ from Proposition~\ref{prop:HPE_full}. This completes the proof. 

\section{Missing Proofs in Section~\ref{sec:uniform_averaging}}
\label{appen:uniform_averaging}

\subsection{Proof of Lemma~\ref{lem:bias}}
\label{appen:bias}
Recall that in the case of uniform averaging, we have $\bar{\mH}_t = \frac{1}{t+1} \sum_{i=0}^t {\mH}_i$. Hence, it follows from Jensen's inequality that $\|\bar{\mH}_t - \mH_t\| \leq \frac{1}{t+1} \sum_{i=0}^t \|\mH_i - \mH_t\|$. To prove the second claim, note that Assumption~\ref{assum:bounded_H_diff} directly implies $\|\mH_i - \mH_t\| \leq M_1$. Moreover, we can use Assumption~\ref{assum:lips_hess} and the triangle inequality to bound $\|\mH_t - \mH_i\| \leq L_2 \|\vx_t - \vx_i\| \leq L_2(\|\vx_t-\vx^*\|+\|\vx_i-\vx^*\|)$. Since $i \leq t$ and $\|\vx_t-\vx^*\|$ is non-increasing in $t$ by Proposition~\ref{prop:HPE}, we further have $\|\vx_t - \vx^*\| \leq \|\vx_i-\vx^*\|$, which proves $\|\mH_t - \mH_i\| \leq 2L_2 \|\vx_i-\vx^*\|$. 

\subsection{Proof of Theorem~\ref{thm:uniform_main_thm}}

{\myalert{Warm-up phase.} To determine the transition point $\calT_1$, recall that both the linear approximation error and the bias term can be bounded by $M_1$ according to Lemmas~\ref{lem:linear_approx} and~\ref{lem:bias}. Since Lemma~\ref{lem:stochastic_error} shows that $\|\bar{\mE}_t\| = \tilde{\bigO}(\Upsilon_E/\sqrt{t})$, we will have $\|\bar{\mE}_t\| \leq M_1$ when $t = \tilde{\Omega}(\Upsilon_E^2/M^2_1) = \tilde{\Omega}(\Upsilon^2/\kappa^2)$. 
More specifically, the transition point is given by 
\begin{equation}\label{eq:def_T_1}
    \calT_1 = \max \Bigl\{ 
        \frac{256\Upsilon^2}{\kappa^2}\log \frac{8d\Upsilon}{\kappa \delta}, 4\log\frac{d}{\delta}, 
    \log_{\frac{1}{\beta}} \frac{{\alpha\beta}}{3M_1 \sigma_0} \Bigr\},
\end{equation}
where $\delta \in (0,1)$ satisfies $d/\delta \geq e$, $\alpha,\beta \in (0,1)$ are line-search parameters, and $\sigma_0$ is the initial step size. }

{\myalert{Linear convergence phase}. In the following lemma, we prove the linear convergence of Algorithm~\ref{alg:stochastic_NPE} with uniform averaging.
\begin{lemma}\label{lem:linear_phase}
    Assume that $\beta \leq 1/2$ and recall the definition of $\calT_1$ in \eqref{eq:def_T_1}.
    For any $t \geq \calT_1$, we have 
    \begin{equation*}
        \|\vx_{t+1}-\vx^*\|^2 \leq \|\vx_{t}-\vx^*\|^2 \left(1+ \frac{2{\alpha} \beta}{3 \kappa} \right)^{-1}, 
    \end{equation*}
    where $\kappa \triangleq M_1/\mu$ is the condition number. 
\end{lemma}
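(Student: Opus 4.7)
By Proposition~\ref{prop:HPE}, $\|\vx_{t+1}-\vx^*\|^2 \leq \|\vx_t-\vx^*\|^2/(1+2\eta_t\mu)$, so the target rate reduces to showing the uniform step-size lower bound $\eta_t \geq \alpha\beta/(3M_1)$ for every $t \geq \calT_1$, which then gives $1+2\eta_t\mu \geq 1+2\alpha\beta/(3\kappa)$. The approach is first to bound the approximation error $\mathcal{E}_t$ uniformly via the decomposition in~\eqref{eq:error_decomp}: Lemma~\ref{lem:linear_approx} bounds the linear-approximation term by $M_1$, Lemma~\ref{lem:bias} bounds the bias $\|\mH_t-\bar{\mH}_t\|$ by $M_1$ using $\|\mH_t-\mH_i\| \leq M_1$, and Lemma~\ref{lem:stochastic_error} together with the first two terms in~\eqref{eq:def_T_1} ensures $\|\bar{\mE}_t\| \leq M_1$ for $t \geq \calT_1$. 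Summing yields $\mathcal{E}_t \leq 3M_1$ throughout the linear phase.

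With this error bound in hand, Lemma~\ref{lem:step_size_bnd} immediately gives $\eta_t \geq \alpha\beta/\mathcal{E}_t \geq \alpha\beta/(3M_1)$ whenever $t \in \calB$. For $t \notin \calB$ I use the warm-start rule $\sigma_{t+1}=\eta_t/\beta$ together with induction: if the most recent backtracking index $s \in [\calT_1, t]$ exists, then $\eta_s \geq \alpha\beta/(3M_1)$ and the factors of $1/\beta > 1$ accumulated during the subsequent no-backtracking steps can only raise $\eta_t$ further; if no backtracking has occurred up to $t$, then $\eta_t = \sigma_0/\beta^t$, and the third term $\log_{1/\beta}(\alpha\beta/(3M_1\sigma_0))$ in~\eqref{eq:def_T_1} is chosen precisely so that $\sigma_0/\beta^t \geq \alpha\beta/(3M_1)$ once $t \geq \calT_1$. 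The main obstacle is the intermediate case in which the most recent backtracking occurs inside the warm-up window $s \in [0,\calT_1)$, where the concentration in Lemma~\ref{lem:stochastic_error} is not yet valid and Lemma~\ref{lem:step_size_bnd} alone does not guarantee $\eta_s \geq \alpha\beta/(3M_1)$; my plan there is to trace $\eta_t$ back to this $s$ and combine the deterministic line-search lower bound at $s$ with the geometric amplification of $1/\beta$ per subsequent no-backtracking iteration (the assumption $\beta \leq 1/2$ keeping the constants favorable) to certify that the step size has re-entered the target regime by time $\calT_1$. Plugging $\eta_t \geq \alpha\beta/(3M_1)$ into Proposition~\ref{prop:HPE} then delivers the claimed per-iteration linear rate $(1 + 2\alpha\beta/(3\kappa))^{-1}$.
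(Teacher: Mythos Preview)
Your plan is correct and matches the paper's approach: reduce to a step-size lower bound $\eta_t \geq \alpha\beta/(3M_1)$ via Proposition~\ref{prop:HPE}, control $\mathcal{E}_t \leq 2M_1 + \|\bar{\mE}_t\|$ through Lemmas~\ref{lem:linear_approx} and~\ref{lem:bias}, and invoke Lemma~\ref{lem:stochastic_error} for the noise term. The paper's organization of the induction is slightly cleaner than your case split: it proves, for \emph{all} $t\geq 0$, the unified bound
\[
\eta_t \;\geq\; \min\Bigl\{\frac{\alpha\beta}{2M_1+\phi(t)},\;\frac{\sigma_0}{\beta^t}\Bigr\},\qquad \phi(t)\triangleq 8\Upsilon_E\sqrt{\tfrac{\log(d(t+1)/\delta)}{t+1}},
\]
carrying the time-varying noise envelope $\phi(t)$ through the recursion. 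In the no-backtracking step this uses $\phi(s)\leq \phi(s+1)/\beta$, which follows from $\phi(s)/\phi(s+1)\leq \sqrt{(s+2)/(s+1)}\leq 2\leq 1/\beta$; this is precisely where $\beta\leq 1/2$ enters.

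Your ``intermediate case'' (last backtrack at $s<\calT_1$) needs exactly this estimate to become quantitative: iterating $\phi(s)\beta\leq \phi(s+1)$ gives $\phi(s)\beta^{t-s}\leq \phi(t)\leq M_1$ for $t\geq\calT_1$, so that $\eta_t=\eta_s/\beta^{t-s}\geq \alpha\beta/((2M_1+\phi(s))\beta^{t-s})\geq \alpha\beta/(3M_1)$. Your proposal gestures at this with ``$\beta\leq 1/2$ keeping the constants favorable,'' but you should state the inequality $\phi(s)/\phi(s+1)\leq 1/\beta$ explicitly---without it the geometric amplification argument is incomplete, since for small $s$ the noise bound $\phi(s)$ can be arbitrarily larger than $M_1$ and one genuinely needs the $(t-s)$-fold amplification to absorb it.
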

\begin{proof}
    See Appendix~\ref{appen:linear_phase}. 
\end{proof}}

Now we discuss the transition point $\calT_2$. At a high level, the algorithm transitions to the superlinear phase if all three errors discussed in Section~\ref{subsec:convergence_analysis} are reduced from $\bigO(M)$ to $\bigO(\mu)$. 
For this to happen, first the iterate $\vx_t$ needs to reach a local neighborhood satisfying $ \|\vx_t-\vx^*\| = \bigO(\mu/L_2)$. As a corollary of Lemma~\ref{lem:linear_phase}, this holds at most after an additional $\tilde{\bigO}(\kappa)$ iterations.
Specifically, let $\nu \in (0,1)$ be a parameter and define 
\begin{equation}\label{eq:def_I}
    \mathcal{I} = \calT_1 + 2\Bigl(1+\frac{3\kappa}{2{\alpha} \beta}\Bigr) \log \frac{L_2D}{\nu\mu}, 
\end{equation}
where $D = \|\vx_0-\vx^*\|$ is the initial distance to the optimal solution. 
Then Lemma~\ref{lem:linear_phase} implies that we have $\|\vx_t-\vx^*\| \leq \nu\mu/L_2$ for all $t \geq \mathcal{I}$. Moreover, Lemma~\ref{lem:stochastic_error} implies that the averaged Hessian noise satisfies $\|\bar{\mE}_t\|  = \bigO(\mu)$ when $t =  \tilde{\Omega}(\Upsilon_E^2 / \mu^2) = \tilde{\Omega}(\Upsilon^2)$. Finally, regarding the bias term, following the discussions after Lemma~\ref{lem:bias}, it can be shown that $\|\mH_t-\bar{\mH}_t\| = \bigO\left(\frac{M_1 \calI}{t+1}\right)$. Thus, $\|\mH_t-\bar{\mH}_t\| = \bigO(\mu)$ when $t = \Omega (\kappa \calI)$. Combining all pieces together, we formally define the second transition point by
\begin{equation}\label{eq:def_T2}
    \calT_2 = \max\biggl\{ \frac{256\Upsilon^2}{\nu^2}\log \frac{8d \Upsilon}{\delta \nu },  
     \frac{\kappa \calI}{\nu}-1 \biggr\}.
\end{equation} 
Since $\calI = \tilde{\bigO}(\calT_1 +\kappa) = \tilde{\bigO}({\Upsilon^2}/\kappa^2+\kappa)$, we note that $\calT_2 = \tilde{\bigO}(\max\{{\Upsilon^2},{\Upsilon^2}/\kappa+\kappa^2\}) = \tilde{\bigO}({\Upsilon^2}+\kappa^2)$. 

{\myalert{Superlinear phase}. 
In the following theorem, we show that after $\calT_2$ iterations, Algorithm~\ref{alg:stochastic_NPE} with uniform averaging converges at a superlinear rate. See Appendix~\ref{appen:superlinear_phase} for proof.
\begin{theorem}\label{thm:superlinear_phase}
    Let $\nu \in (0,1)$ be a parameter satisfying 
      $  \left(\frac{5}{2\alpha\beta\sqrt{(1-\alpha^2) \beta}}+\frac{25}{\alpha\sqrt{2\beta}}\right){\nu} \leq 1$.
    and recall the definition of $\calT_2$ in \eqref{eq:def_T2}. 
    Then for any $t \geq \calT_2$,  
    \begin{equation*}
        \|\vx_{t+1}-\vx^*\| \leq 
        \left(\frac{1}{2\beta\sqrt{1-\alpha^2}}+5\right) \rho_t \|\vx_{t}-\vx^*\|,
    \end{equation*}
    where 
    \begin{equation}\label{eq:def_of_rho}
        \rho_t = \frac{4 \Upsilon }{\alpha\sqrt{\beta}}\sqrt{\frac{\log(d(t+1)/\delta)}{t+1}} + \frac{3\kappa \calI}{2\alpha\sqrt{\beta}(t+1)}.
    \end{equation}
\end{theorem}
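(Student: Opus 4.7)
The plan is to convert the step-size lower bound from Lemma~\ref{lem:step_size_bnd} into a per-iteration contraction on $r_t := \|\vx_t-\vx^*\|$ via Proposition~\ref{prop:HPE}, and then control the approximation error $\mathcal{E}_t$ term by term so that each piece matches the structure of $\rho_t$. First consider $t \in \calB$: the second lower bound in Lemma~\ref{lem:step_size_bnd} gives $\eta_t \geq 2\alpha^2\beta\mu/\mathcal{E}_t^2$, hence $1+2\eta_t\mu \geq 4\alpha^2\beta\mu^2/\mathcal{E}_t^2$, and Proposition~\ref{prop:HPE} yields
\begin{equation*}
r_{t+1} \;\leq\; \frac{r_t}{\sqrt{1+2\eta_t\mu}} \;\leq\; \frac{\mathcal{E}_t}{2\alpha\mu\sqrt{\beta}}\, r_t.
\end{equation*}
For $t \notin \calB$, I would use $\eta_t = \sigma_t = \eta_{t-1}/\beta$ together with the monotonicity of step sizes to conclude that the contraction is at least as strong as in the most recent index in $\calB$.

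The second step is to decompose $\mathcal{E}_t \leq A_t + B_t + S_t$, where $A_t$ is the linear-approximation error, $B_t = \|\mH_t-\bar{\mH}_t\|$ the bias, and $S_t = \|\bar{\mE}_t\|$ the averaged stochastic noise. Lemma~\ref{lem:linear_approx} gives $A_t \leq L_2 r_t/(2\beta\sqrt{1-\alpha^2})$. For the bias, I would follow the outline after Lemma~\ref{lem:bias} and split the sum at index $\calI$: the first $\calI$ terms yield $M_1\calI/(t+1)$, and on the second block $i\geq\calI$ we have $r_i\leq\nu\mu/L_2$ while $r_i$ itself decays geometrically by Lemma~\ref{lem:linear_phase}, so a geometric-series bound contributes an additional $O(\kappa\nu\mu/(\alpha\beta(t+1)))$, which is dominated by the first block under $\nu\leq\alpha\beta\calI/6$ (implied by the hypothesis on $\nu$). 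This gives $B_t\leq 3M_1\calI/(t+1)$, and after division by $2\alpha\mu\sqrt{\beta}$ produces exactly the $3\kappa\calI/(2\alpha\sqrt{\beta}(t+1))$ component of $\rho_t$. Lemma~\ref{lem:stochastic_error} supplies $S_t\leq 8\Upsilon_E\sqrt{\log(d(t+1)/\delta)/(t+1)}$, which similarly yields the $\Upsilon$-component of $\rho_t$.

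The delicate step is absorbing $A_t$. A naive application of $r_t\leq\nu\mu/L_2$ leaves a constant-in-$t$ residual that cannot be hidden inside the decaying $\rho_t$. My plan is to keep the product $r_t\cdot A_t/(2\alpha\mu\sqrt{\beta})$ in its ``quadratic'' form $L_2 r_t^2/(4\alpha\beta\mu\sqrt{(1-\alpha^2)\beta})$ and argue by induction on $t$, with inductive hypothesis $r_t \leq 2\alpha\sqrt{\beta}\mu\rho_t/L_2$. The base case at $t=\calT_2$ uses $r_{\calT_2}\leq\nu\mu/L_2$ together with $\rho_{\calT_2} \gtrsim \nu/(\alpha\sqrt{\beta})$; the inductive step closes as soon as $C\rho_t\leq 1$, because then the claimed one-step contraction shrinks $r_t$ faster than $\rho_t$ itself. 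Once this bound on $r_t$ is in hand, one of the two factors of $r_t$ in the quadratic term can be exchanged for $2\alpha\sqrt{\beta}\mu\rho_t/L_2$, producing a contribution of $\rho_t r_t/(2\beta\sqrt{1-\alpha^2})$ that matches exactly the $\frac{1}{2\beta\sqrt{1-\alpha^2}}$ coefficient in $C$, while the remaining additive $5$ absorbs the constants arising in the bias and stochastic bounds. The hypothesis $\bigl(\tfrac{5}{2\alpha\beta\sqrt{(1-\alpha^2)\beta}}+\tfrac{25}{\alpha\sqrt{2\beta}}\bigr)\nu\leq 1$ is the precise quantitative requirement ensuring both $C\rho_{\calT_2}\leq 1$ and that the $\nu$-dependent slack in each term fits within this constant.
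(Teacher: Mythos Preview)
Your proposal is correct and follows essentially the paper's argument: the same reduction via Proposition~\ref{prop:HPE} and the second bound in Lemma~\ref{lem:step_size_bnd} to controlling $\mathcal{E}_t/(2\alpha\mu\sqrt{\beta})$, the same three-term decomposition with the bias sum split at index $\calI$ to obtain $\|\mH_t-\bar{\mH}_t\|\leq 3M_1\calI/(t+1)$, and crucially the same induction on the hypothesis $L_2\|\vx_t-\vx^*\|/(2\alpha\sqrt{\beta}\mu)\leq\rho_t$ to absorb the linear-approximation term $A_t$. The paper differs only in bookkeeping: it carries a \emph{second} inductive hypothesis $1/\sqrt{2\mu\eta_t}\leq C\rho_t$ jointly with the first (Lemma~\ref{lem:superlinear_induction}), seeded by a preliminary induction (Lemma~\ref{lem:1/t_bound}) that establishes the cruder bound $1/\sqrt{2\mu\eta_t}\leq \tfrac{\nu}{4\alpha\beta\sqrt{(1-\alpha^2)\beta}}+\rho_t$ for \emph{all} $t\geq\calI$ regardless of whether $t\in\calB$; this makes the base case and the non-backtracking case uniform, whereas your ``go back to the most recent $\calB$-index'' needs a small patch when that index falls before $\calT_2$ (or before $\calI$), and the constant $5$ actually enters from the base-case ratio $\rho_{\calT_2}\leq 5\nu/(2\alpha\sqrt{\beta})$ versus $\rho_{\calT_2}\geq \nu/(2\alpha\sqrt{\beta})$ rather than from the bias/stochastic terms.
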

}

In Theorem~\ref{thm:superlinear_phase}, we observe that the rate $\rho_t$ in \eqref{eq:def_of_rho} goes to zero as the number of iterations $t$ increases, and thus it implies that the iterates converge to $\vx^*$ superlinearly. Moreover, the rate $\rho_t$ consists of two terms. The first term comes from the averaged noise $\|\bar{\mE}_t\|$, which decays at the rate of $\tilde{\bigO}( \Upsilon /\sqrt{t})$. In addition, the second term is due to the bias of our averaged Hessian estimate $\tilde{\mH}_t$, which decays at the rate of ${\bigO}(\kappa \mathcal{I}/t)$. Hence, when $t$ is sufficiently large, the averaged noise will dominate and the superlinear rate settles for the slower rate of $\tilde{\bigO}( \Upsilon /\sqrt{t})$. Specifically, 
the algorithm transitions from the initial superlinear rate to the final superlinear rate when the two terms in \eqref{eq:def_of_rho} are balanced. Hence, we define the third transition point $\calT_3$ as the root of
\begin{equation*}
    64 (\calT_3+1)\log(d(\calT_3+1)/\delta) = \frac{9 \kappa^2 \calI^2}{\Upsilon^2}.
\end{equation*}
Since $\calI = \tilde{\bigO}(\Upsilon^2/\kappa^2 +\kappa)$, $\calT_3 = \tilde{\bigO}((\Upsilon^2/\kappa +\kappa^2)^2/\Upsilon^2)$. 
We summarize our discussions in the following corollary. 
\begin{corollary}\label{coro:superlinear_transition}
    For $\mathcal{T}_2 \leq t \leq \mathcal{T}_3-1$, we have 
    \begin{equation*}
        \|\vx_{t+1}-\vx^*\| \leq 
        \left(\frac{1}{2\beta\sqrt{1-\alpha^2}}+5\right) \rho_t^{(1)} \|\vx_{t}-\vx^*\|,
    \end{equation*}
    where $\rho_t^{(1)} = \frac{6\kappa \calI}{\alpha\sqrt{2\beta}(t+1)}$. Moreover, for $t \geq \mathcal{T}_3$, we have 
    \begin{equation*}
        \|\vx_{t+1}-\vx^*\| \leq 
        \left(\frac{1}{2\beta\sqrt{1-\alpha^2}}+5\right) \rho_t^{(2)} \|\vx_{t}-\vx^*\|,
    \end{equation*}
    where $\rho_t^{(2)} = \frac{8 \sqrt{2} \Upsilon }{\alpha\sqrt{\beta}}\sqrt{\frac{\log(d(t+1)/\delta)}{t+1}}$.
    
\end{corollary}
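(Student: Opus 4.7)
The plan is to derive the corollary directly from Theorem~\ref{thm:superlinear_phase}, which already gives $\|\vx_{t+1}-\vx^*\| \leq C\rho_t\|\vx_t-\vx^*\|$ with $C = \frac{1}{2\beta\sqrt{1-\alpha^2}}+5$ and $\rho_t = A_t + B_t$ for the two summands in \eqref{eq:def_of_rho}. Here $A_t := \frac{4\Upsilon}{\alpha\sqrt{\beta}}\sqrt{\frac{\log(d(t+1)/\delta)}{t+1}}$ is the stochastic error contribution and $B_t := \frac{3\kappa\calI}{2\alpha\sqrt{\beta}(t+1)}$ is the bias contribution. The strategy is to observe that the transition point $\calT_3$ is defined precisely as the crossing point where $A_t = B_t$, and to show that on either side of $\calT_3$ one of the two terms dominates, yielding the stated rates $\rho_t^{(1)}$ and $\rho_t^{(2)}$.

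First I would note that $h(s) := (s+1)\log(d(s+1)/\delta)$ is monotonically increasing in $s \geq 0$ (since $d/\delta \geq e$), so the definition
\begin{equation*}
64(\calT_3+1)\log(d(\calT_3+1)/\delta) = \frac{9\kappa^2\calI^2}{\Upsilon^2}
\end{equation*}
unambiguously specifies the crossover. For $\calT_2 \leq t \leq \calT_3 - 1$, monotonicity of $h$ gives $64(t+1)\log(d(t+1)/\delta) \leq \frac{9\kappa^2\calI^2}{\Upsilon^2}$, which rearranges to $A_t \leq \frac{3\kappa\calI}{2\alpha\sqrt{\beta}(t+1)} = B_t$. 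Consequently $\rho_t \leq 2B_t = \frac{3\kappa\calI}{\alpha\sqrt{\beta}(t+1)} \leq \frac{6\kappa\calI}{\alpha\sqrt{2\beta}(t+1)} = \rho_t^{(1)}$, using $\sqrt{2}/2 \leq 1$ in the last step. This proves part one of the corollary.

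For the second regime $t \geq \calT_3$, the same monotonicity delivers $64(t+1)\log(d(t+1)/\delta) \geq \frac{9\kappa^2\calI^2}{\Upsilon^2}$, which after taking square roots and rearranging yields $B_t \leq A_t$. Therefore $\rho_t \leq 2A_t = \frac{8\Upsilon}{\alpha\sqrt{\beta}}\sqrt{\frac{\log(d(t+1)/\delta)}{t+1}} \leq \frac{8\sqrt{2}\Upsilon}{\alpha\sqrt{\beta}}\sqrt{\frac{\log(d(t+1)/\delta)}{t+1}} = \rho_t^{(2)}$. Plugging these two bounds on $\rho_t$ back into the conclusion of Theorem~\ref{thm:superlinear_phase} gives exactly the two cases of the corollary.

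There is no genuine obstacle here since the corollary is essentially a bookkeeping statement: the work has already been done in Theorem~\ref{thm:superlinear_phase}, and all that remains is a one-line algebraic comparison enabled by the definition of $\calT_3$. The only minor care needed is to verify the monotonicity of $h(s)$ so that the crossover argument is valid on both sides, and to track the numeric constants carefully so that the factors $\sqrt{2}$ absorbed into $\rho_t^{(1)}$ and $\rho_t^{(2)}$ provide the requisite slack.
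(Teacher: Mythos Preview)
Your proposal is correct and matches the paper's implicit argument: the paper does not write out a proof for this corollary, simply stating ``We summarize our discussions in the following corollary'' after defining $\calT_3$ as the balance point of the two terms in \eqref{eq:def_of_rho}. Your explicit verification---checking monotonicity of $h(s)=(s+1)\log(d(s+1)/\delta)$, deriving $A_t\le B_t$ for $t\le\calT_3-1$ and $B_t\le A_t$ for $t\ge\calT_3$, then bounding $\rho_t\le 2\max\{A_t,B_t\}$ and absorbing the slack into the stated constants---is exactly the intended derivation and is carried out accurately.
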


\subsection{Proof of Lemma~\ref{lem:linear_phase}}
\label{appen:linear_phase}

We divide the proof of Lemma~\ref{lem:linear_phase} into the following three steps. First, in Lemma~\ref{lem:step_size_bnd_backtracking}, we provide a lower bound on the the step size $\eta_t$ in those iterations where our line search scheme backtracks the step size, i.e.,  $t \in \mathcal{B}$. Building on Lemma~\ref{lem:step_size_bnd_backtracking}, we use induction in Lemma~\ref{lem:step_size_bnd_induction} to prove a lower bound for all $t \geq 0$.  This allows us to establish $\eta_t = \Omega(1/M_1)$ for all $t \geq \calT_1$ in Corollary~\ref{coro:1/M_1_bound}, from which Lemma~\ref{lem:linear_phase} immediately follows.

To simplify the notation, we define the function 
\begin{equation}\label{def:phi}
    \phi(t) = 8 \Upsilon_E \sqrt{\frac{\log(d(t+1)/\delta)}{t+1}}.
\end{equation}
Then Lemma~\ref{lem:stochastic_error} can be equivalently written as $\|\bar{\mE}_t\| \leq \phi(t)$ for all $t \geq 4 \log(d/\delta)$. We are now ready to state our first lemma.
\begin{lemma}\label{lem:step_size_bnd_backtracking}
    If $t\in \calB$, then we have $\eta_t \geq {\alpha}\beta/(2M_1 + \phi(t))$. 
\end{lemma}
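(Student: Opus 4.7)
The plan is to chain together the bounds that have already been set up in the preceding lemmas, so the proof is essentially a bookkeeping exercise. Starting from Lemma~\ref{lem:step_size_bnd}, for any $t\in\calB$ we have
\[
\eta_t \;\geq\; \frac{\alpha\beta\,\|\tilde{\vx}_t-\vx_t\|}{\|\vg(\tilde{\vx}_t)-\vg(\vx_t)-\tilde{\mH}_t(\tilde{\vx}_t-\vx_t)\|},
\]
so it suffices to show that the approximation error ratio $\mathcal{E}_t = \|\vg(\tilde{\vx}_t)-\vg(\vx_t)-\tilde{\mH}_t(\tilde{\vx}_t-\vx_t)\|/\|\tilde{\vx}_t-\vx_t\|$ is at most $2M_1+\phi(t)$.

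I would carry out the bound on $\mathcal{E}_t$ using the triangle-inequality split from \eqref{eq:error_decomp}: the linear approximation part $\|\vg(\tilde{\vx}_t)-\vg(\vx_t)-\mH_t(\tilde{\vx}_t-\vx_t)\|/\|\tilde{\vx}_t-\vx_t\|$ is controlled by the first (global) bound of Lemma~\ref{lem:linear_approx} by $M_1$; the remaining piece $\|\mH_t-\tilde{\mH}_t\|$ is further split as $\|\mH_t-\bar{\mH}_t\|+\|\bar{\mE}_t\|$.

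For the bias term, Lemma~\ref{lem:bias} gives $\|\mH_t-\bar{\mH}_t\|\leq \tfrac{1}{t+1}\sum_{i=0}^{t}\|\mH_t-\mH_i\|$, and applying Assumption~\ref{assum:bounded_H_diff} term by term yields $\|\mH_t-\bar{\mH}_t\|\leq M_1$. For the averaged stochastic error, Lemma~\ref{lem:stochastic_error} (on the high-probability event being conditioned on, which covers $t\geq 4\log(d/\delta)$) gives $\|\bar{\mE}_t\|\leq \phi(t)$ with $\phi$ as defined in \eqref{def:phi}. Adding the three contributions gives $\mathcal{E}_t \leq M_1 + M_1 + \phi(t) = 2M_1+\phi(t)$, and plugging this into the lower bound from Lemma~\ref{lem:step_size_bnd} yields the claim $\eta_t\geq \alpha\beta/(2M_1+\phi(t))$.

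There is no real obstacle here; the only subtle point worth flagging is that the application of Lemma~\ref{lem:stochastic_error} implicitly restricts us to $t \geq 4\log(d/\delta)$, which is consistent with how this lemma will later be invoked in the linear-convergence argument, since $\calT_1 \geq 4\log(d/\delta)$ by the definition in \eqref{eq:def_T_1}. For smaller $t$ the lemma can be interpreted vacuously, since the line-search lower bound from Lemma~\ref{lem:step_size_bnd} still holds; only the closed-form bound in terms of $\phi(t)$ requires the concentration event.
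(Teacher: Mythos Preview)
Your proposal is correct and matches the paper's own proof essentially step for step: invoke the first lower bound in Lemma~\ref{lem:step_size_bnd}, split $\mathcal{E}_t$ via the triangle inequality into the linear-approximation term (bounded by $M_1$ via Lemma~\ref{lem:linear_approx}), the bias term (bounded by $M_1$ via Lemma~\ref{lem:bias}), and the averaged noise term (bounded by $\phi(t)$ via Lemma~\ref{lem:stochastic_error}). Your caveat about the implicit restriction $t\geq 4\log(d/\delta)$ is exactly the point the paper handles via the remark following Lemma~\ref{lem:stochastic_error}.
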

\begin{proof}
    If $t \in \calB$, by Lemma~\ref{lem:step_size_bnd} we can lower bound the step size $\eta_t$ by  
    \begin{equation}\label{eq:step_size_bnd_linear}
        \eta_t \geq \frac{{\alpha }\beta\|\tilde{\vx}_t - \vx_t\|}{\|\vg(\tilde{\vx}_t) - \vg(\vx_t) - \tilde{\mH}_t(\tilde{\vx}_t - \vx_t)\|} = \frac{\alpha \beta}{ \mathcal{E}_t},
    \end{equation}
    where $\mathcal{E}_t \triangleq \frac{\|\vg(\tilde{\vx}_t) - \vg(\vx_t) - \tilde{\mH}_t(\tilde{\vx}_t - \vx_t)\|}{\|\tilde{\vx}_t - \vx_t\|}$ is the normalized approximation error. 
    Moreover, as outlined in Section~\ref{subsec:convergence_analysis}, we can apply the triangle inequality to upper bound $\mathcal{E}_t$. Specifically, we have
    \begin{equation}\label{eq:bound_normalized_approx}
        \mathcal{E}_t  \leq \frac{\|\vg(\tilde{\vx}_t) - \vg(\vx_t) - {\mH}_t(\tilde{\vx}_t - \vx_t)\|}{\|\tilde{\vx}_t - \vx_t\|} + \|\mH_t - \bar{\mH}_t\| + \|\bar{\mE}_t\|. 
    \end{equation}
    By \eqref{eq:linear_approx_bound_1} in Lemma~\ref{lem:linear_approx}, the first term in \eqref{eq:bound_normalized_approx} is upper bounded by $M_1$. Moreover, it also follows from Lemma~\ref{lem:bias} that $ \|\mH_t - \bar{\mH}_t\| \leq \frac{1}{t+1} \sum_{i=0}^{t} \|\mH_t - \mH_i\| \leq M_1$. Hence, we further obtain $\mathcal{E}_t \leq 2M_1 + \|\bar{\mE}_t\| \leq 2M_1 + \phi(t)$. Combining this with \eqref{eq:step_size_bnd_linear}, we obtain the desired result. 
\end{proof}

Lemma~\ref{lem:step_size_bnd_backtracking} provides a lower bound on the step size $\eta_t$, but only for the case where $t \in \mathcal{B}$. In the next lemma, we further use induction to show a lower bound for the step sizes in all iterations. 
\begin{lemma}\label{lem:step_size_bnd_induction}
    Assume that $\beta \leq \frac{1}{2}$. For any $t \geq 0$, we have 
    \begin{equation}\label{eq:step_size_bnd_induction}
        \eta_{t} \geq \min\left\{ \frac{{\alpha}\beta}{2M_1+ \phi(t)}, \frac{\sigma_0}{\beta^{t}} \right\}
    \end{equation}
\end{lemma}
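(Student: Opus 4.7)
The plan is to prove \eqref{eq:step_size_bnd_induction} by induction on $t$, combining Lemma~\ref{lem:step_size_bnd_backtracking} when the line search backtracks at step $t$ (so that $t \in \calB$) with the warm-start identity $\sigma_{t+1} = \eta_t/\beta$ from Step~8 of Algorithm~\ref{alg:stochastic_NPE} when it does not. The role of the assumption $\beta \leq 1/2$ will become clear in the inductive step: it lets us absorb the factor $1/\beta$ produced by the warm-start into a ``near-monotonicity'' property of the sequence $\{\phi(t)\}$.

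The base case $t=0$ is immediate. If $0 \in \calB$, Lemma~\ref{lem:step_size_bnd_backtracking} yields $\eta_0 \geq \alpha\beta/(2M_1+\phi(0))$; if $0 \notin \calB$, then $\eta_0 = \sigma_0 = \sigma_0/\beta^0$. For the inductive step I would assume the bound at $t-1$ and split on whether $t \in \calB$. If $t \in \calB$, Lemma~\ref{lem:step_size_bnd_backtracking} again gives the first branch of the min directly. If $t \notin \calB$, the warm-start gives $\eta_t = \sigma_t = \eta_{t-1}/\beta$, and the induction hypothesis at $t-1$ lifts (after dividing by $\beta$) to
$\eta_t \geq \min\{\alpha/(2M_1+\phi(t-1)),\, \sigma_0/\beta^t\}$.
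The second term already matches the bound we want, so the only remaining task is to show that $\alpha/(2M_1+\phi(t-1)) \geq \alpha\beta/(2M_1+\phi(t))$, equivalently $(2M_1+\phi(t))/(2M_1+\phi(t-1)) \geq \beta$.

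The main obstacle is precisely this last inequality, which is where both $\beta \leq 1/2$ and the explicit form of $\phi$ enter. My plan is to first establish the near-monotonicity bound $\phi(t-1) \leq 2\phi(t)$ for all $t \geq 1$ directly from the definition $\phi(t) = 8\Upsilon_E\sqrt{\log(d(t+1)/\delta)/(t+1)}$ and the standing hypothesis $d/\delta \geq e$: a short calculation gives $\phi(t-1)^2/\phi(t)^2 = \frac{(t+1)\log(dt/\delta)}{t\log(d(t+1)/\delta)} \leq (t+1)/t \leq 2 \leq 4$. Using $\phi(t) \geq \phi(t-1)/2$ together with $2M_1 \geq M_1$, we then bound $2M_1 + \phi(t) \geq 2M_1 + \phi(t-1)/2 \geq \tfrac{1}{2}(2M_1 + \phi(t-1)) \geq \beta\,(2M_1 + \phi(t-1))$, where the last step uses $\beta \leq 1/2$. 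This closes the induction and proves the claim.
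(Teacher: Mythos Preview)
Your proposal is correct and follows essentially the same approach as the paper: induction on $t$, splitting on whether $t\in\calB$ (invoking Lemma~\ref{lem:step_size_bnd_backtracking}) or not (using the warm-start $\eta_t=\eta_{t-1}/\beta$), and closing the non-backtracking case via the near-monotonicity bound $\phi(t-1)/\phi(t)\leq\sqrt{(t+1)/t}\leq 2\leq 1/\beta$. The paper's writeup differs only cosmetically, bounding $\phi(s)\leq\phi(s+1)/\beta$ directly and then writing $\alpha/(2M_1+\phi(s))\geq \alpha\beta/(2\beta M_1+\phi(s+1))\geq \alpha\beta/(2M_1+\phi(s+1))$, but the content is the same.
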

\begin{proof}
    We prove this lemma by induction. For the base case $t = 0$, we consider two subcases. If $0 \in \calB$, then by Lemma~\ref{lem:step_size_bnd_backtracking} we obtain that $\eta_0 \geq \frac{{\alpha}\beta}{2M_1+ \phi(0)}$. Otherwise, if $0 \notin \calB$, we have $\eta_0 = \sigma_0$. In both cases, we observe that \eqref{eq:step_size_bnd_induction} is satisfied for the base case $t=0$. 
    
    Now assume that \eqref{eq:step_size_bnd_induction} is satisfied for $t=s$ where $s \geq 0$. For $t = s+1$, we again distinguish two subcases. If $s+1 \in \calB$, then by Lemma~\ref{lem:step_size_bnd_backtracking} we obtain that $\eta_{s+1} \geq \frac{{\alpha}\beta}{2M_1+ \phi(s+1)}$, which implies that \eqref{eq:step_size_bnd_induction} is satisfied. Otherwise, if $s+1 \notin \calB$, then we have 
    \begin{equation}\label{eq:induction_s+1}
        \eta_{s+1} = \sigma_{s+1} = \frac{\eta_s}{\beta} \geq \frac{1}{\beta} \min\left\{ \frac{{\alpha}\beta}{2M_1+ \phi(s)}, \frac{\sigma_0}{\beta^{s}} \right\} = \min\left\{ \frac{{\alpha}}{2M_1+ \phi(s)}, \frac{\sigma_0}{\beta^{s+1}} \right\},
    \end{equation}  
    where we used the induction hypothesis in the last inequality. Furthermore, note that $\phi(s)/\phi(s+1) \leq \sqrt{\frac{s+2}{s+1}}  \leq 2 \leq \frac{1}{\beta}$, which implies that $\phi(s) \leq \phi(s+1)/\beta$. Hence, we have 
    \begin{equation*}
        \frac{{\alpha}}{2M_1+ \phi(s)} \geq \frac{{\alpha} \beta }{2\beta M_1+ \phi(s+1)} \geq \frac{{\alpha} \beta }{2M_1+ \phi(s+1)}. 
    \end{equation*}
    Therefore, \eqref{eq:induction_s+1} implies that $\eta_{s+1} \geq \min\left\{ \frac{{\alpha}\beta}{2M_1+ \phi(s+1)}, \frac{\sigma_0}{\beta^{s+1}} \right\}$ and thus \eqref{eq:step_size_bnd_induction} also holds in this subcase. This completes the induction and we conclude that \eqref{eq:step_size_bnd_induction} holds for all $t \geq 0$. 
\end{proof}

As a corollary of Lemma~\ref{lem:step_size_bnd_induction}, we obtain the following lower bound on $\eta_t$ for $t \geq \mathcal{T}_1$. 
\begin{corollary}\label{coro:1/M_1_bound}
    Recall the definition of $\calT_1$ in \eqref{eq:def_T_1}. For any $t \geq \calT_1$, we have $\eta_t \geq {{\alpha} \beta}/{ (3M_1)}$. 
\end{corollary}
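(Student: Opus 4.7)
The plan is to invoke Lemma~\ref{lem:step_size_bnd_induction}, which already gives the lower bound
\[
\eta_t \;\ge\; \min\!\Bigl\{\frac{\alpha\beta}{2M_1 + \phi(t)},\; \frac{\sigma_0}{\beta^t}\Bigr\}
\]
for every $t \ge 0$ (on the high-probability event of Lemma~\ref{lem:stochastic_error}, which is why $\calT_1$ also contains the term $4\log(d/\delta)$). It will then suffice to verify that each of the two arguments of the minimum is at least $\alpha\beta/(3M_1)$ whenever $t \ge \calT_1$, and take the smaller of the two resulting thresholds on $t$.

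For the second argument, the inequality $\sigma_0/\beta^t \ge \alpha\beta/(3M_1)$ is equivalent to $\beta^t \le 3M_1\sigma_0/(\alpha\beta)$, i.e.\ $t \ge \log_{1/\beta}\!\bigl(\alpha\beta/(3M_1\sigma_0)\bigr)$. This is exactly the third term in the definition of $\calT_1$ in \eqref{eq:def_T_1}, so this step will be immediate.

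The main substantive step is showing $\phi(t) \le M_1$ for the first argument. Using the definition $\phi(t) = 8\Upsilon_E\sqrt{\log(d(t+1)/\delta)/(t+1)}$ from \eqref{def:phi} together with $\Upsilon_E = \mu\Upsilon$ and $M_1 = \mu\kappa$, this reduces to
\[
\frac{\log(d(t+1)/\delta)}{t+1} \;\le\; \frac{\kappa^2}{64\Upsilon^2}.
\]
Setting $u = t+1$ and $c = \kappa^2/(64\Upsilon^2)$, I will apply the standard elementary inequality ``$\log v \le av$ whenever $v \ge (2/a)\log(2/a)$'' with the substitution $v = du/\delta$ and $a = c\delta/d$. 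Unwinding produces a sufficient lower bound on $u$ of the form $\bigO\!\bigl((\Upsilon^2/\kappa^2)\log(d\Upsilon/(\kappa\delta))\bigr)$, which is absorbed into the first component $\tfrac{256\Upsilon^2}{\kappa^2}\log\tfrac{8d\Upsilon}{\kappa\delta}$ of $\calT_1$ thanks to its generous constant factor.

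Combining the two bounds then yields $\eta_t \ge \alpha\beta/(3M_1)$ for all $t \ge \calT_1$, which is the desired conclusion. The only place requiring any real care is the elementary logarithmic inequality used for the first argument; everything else is direct substitution into Lemma~\ref{lem:step_size_bnd_induction}. I do not expect any technical obstacle beyond this bookkeeping.
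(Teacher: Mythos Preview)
Your proposal is correct and follows essentially the same route as the paper: both invoke Lemma~\ref{lem:step_size_bnd_induction} and then verify that each branch of the minimum exceeds $\alpha\beta/(3M_1)$ once $t\ge\calT_1$, with the third term of $\calT_1$ handling $\sigma_0/\beta^t$ and the first two terms handling $\phi(t)\le M_1$. The only cosmetic difference is that the paper outsources the verification of $\phi(t)\le M_1$ to \cite[Lemma~2]{na2022hessian}, whereas you re-derive it via the elementary inequality $\log v\le av$ for $v\ge(2/a)\log(2/a)$; your resulting threshold is indeed absorbed by the constant $256$ in $\calT_1$ (using $d/\delta\ge e$), so there is no gap.
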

\begin{proof}
As shown in \citep[Lemma 2]{na2022hessian}, we have $\phi(t) \leq M_1$ when $t \geq \max\left\{256\frac{\Upsilon^2}{\kappa^2}\log \frac{8d\Upsilon}{\kappa \delta}, 4\log\frac{d}{\delta} \right\}$. Moreover,  we have $\frac{\sigma_0}{\beta^{t}} \geq \frac{{\alpha} \beta}{3M_1}$ when $t \geq \log_{\frac{1}{\beta}} \frac{{\alpha} \beta}{3M_1 \sigma_0}$. Hence, by Lemma~\ref{lem:step_size_bnd_induction} we conclude that $\eta_t \geq \frac{{\alpha} \beta}{ 3M_1}$ when $t \geq \calT_1$.
\end{proof}

Now we are ready to prove Lemma~\ref{lem:linear_phase}. 
\begin{proof}[Proof of Lemma~\ref{lem:linear_phase}]
By Proposition~\ref{prop:HPE}, we have $\|\vx_{t+1}-\vx^*\|^2 \leq \|\vx_t-\vx^*\|^2(1+2\eta_t\mu)^{-1}$. By using Corollary~\ref{coro:1/M_1_bound}, we obtain that $\|\vx_{t+1}-\vx^*\|^2 \leq \|\vx_t-\vx^*\|^2(1+2\alpha\beta\mu/(3M_1))^{-1} = \|\vx_t-\vx^*\|^2(1+2\alpha\beta/(3 \kappa))^{-1}$.   
\end{proof}

\subsection{Proof of Theorem~\ref{thm:superlinear_phase}}
\label{appen:superlinear_phase}

In this section, we present the proof of Theorem~\ref{thm:superlinear_phase}. The proof consists of four steps. To begin with, in Lemma~\ref{lem:dis_bound} we show that the iterate $\vx_t$ stays in a local neighborhood of $\vx^*$ when $t \geq \mathcal{I}$, where $\mathcal{I}$ is defined in \eqref{eq:def_I}. Next, we use this result in Lemma~\ref{lem:1/t_bound_backtracking} to upper bound $\frac{1}{\sqrt{\mu \eta_t }}$ in those iterations where our line search scheme backtracks the step size, i.e.,  $t \in \mathcal{B}$. Then we use induction in Lemma~\ref{lem:1/t_bound} to prove an upper bound for all $t \geq 0$. Furthermore, we again use induction in Lemma~\ref{lem:superlinear_induction} to establish an improved upper bound on $\frac{1}{\sqrt{\mu \eta_t }}$ when $t \geq \mathcal{T}_2$, where $\mathcal{T}_2$ is defined in \eqref{eq:def_T2}. After proving Lemma~\ref{lem:superlinear_induction}, Theorem~\ref{thm:superlinear_phase} then follows from Proposition~\ref{prop:HPE}.

\begin{lemma}\label{lem:dis_bound}
    We have $\|\vx_{t} - \vx^*\| \leq \nu\mu/L_2$ for all $t \geq \calI$, where $\calI$ is given in \eqref{eq:def_I}.  %
\end{lemma}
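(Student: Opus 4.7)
The plan is to show that once we are past the warm-up phase, the linear-convergence rate from Lemma~\ref{lem:linear_phase} reduces the distance to the optimum geometrically, and then to solve for how many additional iterations are needed to reach the target neighborhood of radius $\nu\mu/L_2$.

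First I would invoke Proposition~\ref{prop:HPE} (equivalently Proposition~\ref{prop:HPE_full}) to conclude that $\{\|\vx_t-\vx^*\|\}$ is monotonically non-increasing, which gives in particular $\|\vx_{\calT_1}-\vx^*\|\leq \|\vx_0-\vx^*\| = D$. Then for every $t\geq \calT_1$, iterating the linear-rate bound of Lemma~\ref{lem:linear_phase} yields
\begin{equation*}
    \|\vx_t-\vx^*\|^2 \leq \|\vx_{\calT_1}-\vx^*\|^2 \Bigl(1+\tfrac{2\alpha\beta}{3\kappa}\Bigr)^{-(t-\calT_1)} \leq D^2 \Bigl(1+\tfrac{2\alpha\beta}{3\kappa}\Bigr)^{-(t-\calT_1)}.
\end{equation*}

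Next I would impose the target bound $\|\vx_t-\vx^*\|\leq \nu\mu/L_2$, which after taking logarithms becomes
\begin{equation*}
    (t-\calT_1)\,\log\!\Bigl(1+\tfrac{2\alpha\beta}{3\kappa}\Bigr) \;\geq\; 2\log\tfrac{L_2 D}{\nu\mu}.
\end{equation*}
Applying the elementary inequality $\log(1+x)\geq x/(1+x)$ with $x=2\alpha\beta/(3\kappa)$ gives $\log(1+2\alpha\beta/(3\kappa)) \geq (1+3\kappa/(2\alpha\beta))^{-1}$, so the displayed inequality is implied whenever
\begin{equation*}
    t-\calT_1 \;\geq\; 2\Bigl(1+\tfrac{3\kappa}{2\alpha\beta}\Bigr)\log\tfrac{L_2 D}{\nu\mu},
\end{equation*}
which is exactly the condition $t\geq \calI$ in \eqref{eq:def_I}.

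There is no real obstacle here: everything reduces to chaining Proposition~\ref{prop:HPE} with the linear rate of Lemma~\ref{lem:linear_phase}; the only minor technical point is the choice of the lower bound on $\log(1+x)$, which determines the constants inside $\calI$. The bound $\log(1+x)\geq x/(1+x)$ matches the constants in \eqref{eq:def_I} exactly and avoids any need for a small-$x$ Taylor expansion (which would require a side condition on $\kappa$).
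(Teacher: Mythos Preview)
Your proposal is correct and follows essentially the same argument as the paper: use monotonicity from Proposition~\ref{prop:HPE} to bound $\|\vx_{\calT_1}-\vx^*\|\leq D$, iterate the linear rate of Lemma~\ref{lem:linear_phase}, and solve for the required number of steps. Your explicit use of $\log(1+x)\geq x/(1+x)$ is exactly what the paper implicitly relies on to obtain the constant $2(1+3\kappa/(2\alpha\beta))$ in \eqref{eq:def_I}.
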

\begin{proof}
    By applying Lemma~\ref{lem:linear_phase}, we have $\|\vx_{\calT_1+u}-\vx^*\|^2 \leq \|\vx_{\calT_1}-\vx^*\|^2 \left(1+\frac{2\alpha\beta}{3\kappa}\right)^{-u}$. Moreover, since $\|\vx_t-\vx^*\|$ is non-increasing in $t$, we have $\|\vx_{\calT_1}-\vx^*\| \leq \|\vx_0-\vx^*\| = D$. Thus, we have 
    \begin{equation*}
        \|\vx_{\calT_1+u}-\vx^*\| \leq \frac{\nu\mu}{L_2} \; \Leftarrow \; D^2 \left(1+\frac{2\alpha\beta}{3\kappa}\right)^{-u} \leq \frac{\nu^2\mu^2}{L_2^2} \; \Leftarrow \; u \geq 
        2\Bigl(1+\frac{3\kappa}{2{\alpha} \beta}\Bigr)\log\left(\frac{L_2D}{\nu\mu}\right).
    \end{equation*}
    This completes the proof.   
\end{proof}

Note that by Proposition~\ref{prop:HPE}, we have $\|\vx_{t+1} - \vx^*\|^2 \leq \|\vx_t-\vx^*\|^2(1+2\eta_t \mu)^{-1} \leq \|\vx_t-\vx^*\|^2/(2\eta_t \mu)$, which further implies that 
\begin{equation}\label{eq:contraction}
    \|\vx_{t+1} - \vx^*\| \leq \frac{\|\vx_t-\vx^*\|}{ \sqrt{2\eta_t\mu}}. 
\end{equation}
Hence, to characterize the convergence rate of our method, it is sufficient to upper bound the quantity $1/\sqrt{2\eta_t\mu}$. We achieve this goal in the subsequent lemmas. 
\begin{lemma}\label{lem:1/t_bound_backtracking}
    If $t \in \calB$ and $t \geq \calI$, 
    then 
    \begin{equation}\label{eq:1/t_first_bound}
        \frac{1}{\sqrt{2\eta_t \mu }} \leq  \frac{L_2 \| \vx_t - {\vx}^*\|}{4\alpha\beta\sqrt{(1-\alpha^2)\beta} \mu } + \frac{\|\bar{\mE}_t\|}{2\alpha\sqrt{\beta} \mu} + \frac{3\kappa \calI}{2\alpha\sqrt{\beta}(t+1)}. 
    \end{equation}
    Moreover, it also holds that 
    \begin{equation}\label{eq:1/t_second_bound}
        \frac{1}{\sqrt{ 2\eta_t \mu}} \leq   \frac{\nu}{4\alpha\beta\sqrt{(1-\alpha^2)\beta} } + \frac{\phi(t)}{2\alpha\sqrt{\beta} \mu} + \frac{ 3  \kappa \calI}{2\alpha\sqrt{\beta}(t+1)}. 
    \end{equation}
\end{lemma}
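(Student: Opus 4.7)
The plan is to begin from the second lower bound in Lemma~\ref{lem:step_size_bnd}, which applies because $t\in\calB$. Taking reciprocals and square roots rewrites it as
\[
\frac{1}{\sqrt{2\eta_t\mu}} \leq \frac{\mathcal{E}_t}{2\alpha\sqrt{\beta}\,\mu},
\]
where $\mathcal{E}_t = \|\vg(\tilde{\vx}_t)-\vg(\vx_t)-\tilde{\mH}_t(\tilde{\vx}_t-\vx_t)\|/\|\tilde{\vx}_t-\vx_t\|$. The remaining task is to upper bound $\mathcal{E}_t$ term by term using the triangle-inequality decomposition from \eqref{eq:error_decomp} into linear approximation error, bias $\|\mH_t-\bar{\mH}_t\|$, and averaged stochastic noise $\|\bar{\mE}_t\|$. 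For the linear approximation term I would apply the sharper second branch of Lemma~\ref{lem:linear_approx} to get $L_2\|\vx_t-\vx^*\|/(2\beta\sqrt{1-\alpha^2})$; after dividing by $2\alpha\sqrt{\beta}\mu$ this reproduces the first summand of \eqref{eq:1/t_first_bound}. The noise term is kept as $\|\bar{\mE}_t\|$ and directly yields the second summand.

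The delicate step will be the bias, because I need the tight inequality $\|\mH_t-\bar{\mH}_t\|\leq 3M_1\calI/(t+1)$ in order for division by $2\alpha\sqrt{\beta}\mu$ to produce the third summand $3\kappa\calI/(2\alpha\sqrt{\beta}(t+1))$. Starting from Lemma~\ref{lem:bias} and splitting the average at index $\calI$, the contribution from $i<\calI$ is trivially bounded by $M_1\calI/(t+1)$ via the estimate $\|\mH_t-\mH_i\|\leq M_1$. For the tail $i\geq\calI$ I would use the other branch of Lemma~\ref{lem:bias}, namely $\|\mH_t-\mH_i\|\leq 2L_2\|\vx_i-\vx^*\|$, together with Lemma~\ref{lem:dis_bound} which guarantees $\|\vx_i-\vx^*\|\leq\nu\mu/L_2$. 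A naive per-term bound would only produce an extra additive $2\nu\mu$, which is too weak to fit the claimed scaling. The key idea is to additionally invoke the linear convergence of Lemma~\ref{lem:linear_phase}: for all $i\geq\calI\geq\calT_1$,
\[
\|\vx_i-\vx^*\| \leq \|\vx_\calI-\vx^*\|\,\rho^{\,i-\calI}, \qquad \rho = (1+2\alpha\beta/(3\kappa))^{-1/2}.
\]
Summing the resulting geometric series uses $1/(1-\rho) = O(\kappa/(\alpha\beta))$, so the tail contributes only $O(\nu M_1/(\alpha\beta(t+1)))$, which is dominated by $M_1\calI/(t+1)$ thanks to the lower bound $\calI \geq \Omega(\kappa/(\alpha\beta))$ built into \eqref{eq:def_I}. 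Hence $\|\mH_t-\bar{\mH}_t\|\leq 3M_1\calI/(t+1)$ as required.

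Combining the three pieces and dividing by $2\alpha\sqrt{\beta}\mu$ produces \eqref{eq:1/t_first_bound}. To pass from \eqref{eq:1/t_first_bound} to \eqref{eq:1/t_second_bound} I would simply specialize the first summand via Lemma~\ref{lem:dis_bound} ($\|\vx_t-\vx^*\|\leq\nu\mu/L_2$) and the second via Lemma~\ref{lem:stochastic_error} ($\|\bar{\mE}_t\|\leq\phi(t)$ on the high-probability event assumed throughout Section~\ref{sec:uniform_averaging}); the third summand is unchanged. The main obstacle, as flagged, is the careful arithmetic required to absorb the $\nu$-scale tail contribution into the $\calI$-scale bias bound using the linear rate from Lemma~\ref{lem:linear_phase}; everything else is a mechanical substitution.
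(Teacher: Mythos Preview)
Your proposal is correct and follows essentially the same route as the paper: start from the second lower bound in Lemma~\ref{lem:step_size_bnd}, decompose $\mathcal{E}_t$ into linear approximation error, bias, and averaged noise, apply the second branch of Lemma~\ref{lem:linear_approx}, and bound the bias by splitting at $\calI$ and summing the geometric tail via the linear rate of Lemma~\ref{lem:linear_phase} together with $\|\vx_\calI-\vx^*\|\leq\nu\mu/L_2$ and the lower bound $\calI\geq 2(1+3\kappa/(2\alpha\beta))$ from~\eqref{eq:def_I}. The passage to~\eqref{eq:1/t_second_bound} via Lemma~\ref{lem:dis_bound} and $\|\bar{\mE}_t\|\leq\phi(t)$ is likewise identical.
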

\begin{proof}
    By using the second bound in Lemma~\ref{lem:step_size_bnd}, we obtain that 
    \begin{equation}\label{eq:upper_bnd_in_terms_of_error}
        \frac{1}{ \sqrt{2 \eta_t \mu}} \leq \frac{\|\vg(\tilde{\vx}_t) - \vg(\vx_t) - \tilde{\mH}_t(\tilde{\vx}_t - \vx_t)\|}{2\alpha\sqrt{\beta} \mu \|\tilde{\vx}_t - \vx_t\|} = \frac{\mathcal{E}_t}{2\alpha \sqrt{\beta} \mu}.
    \end{equation}
    Furthermore, by combining \eqref{eq:bound_normalized_approx} and \eqref{eq:linear_approx_bound_1} in Lemma~\ref{lem:linear_approx}, we further have 
    \begin{equation}\label{eq:upper_bnd_on_error}
        \mathcal{E}_t \leq {\frac{L_2\|\vx_t-\vx^*\|}{2\beta\sqrt{1-\alpha^2}}} + \|\mH_t - \bar{\mH}_t\| + \|\bar{\mE}_t\|.
    \end{equation}
    It remains to bound the bias term $\|\mH_t - \bar{\mH}_t\|$. 
    By Lemma~\ref{lem:bias}, we have 
    \begin{align*}
        \|\mH_t - \bar{\mH}_t\| \leq \frac{1}{t+1} \sum_{i=0}^{t} \|\mH_t - \mH_i\| &= \frac{1}{t+1} \sum_{i=0}^{\calI-1} \|\mH_t - \mH_i\| + \frac{1}{t+1} \sum_{i=\calI}^{t} \|\mH_t - \mH_i\|. %
    \end{align*}
    For $i = 0, 1\dots, \calI-1$, we use the first upper bound on $\|\mH_t - \mH_i\|$ in Lemma~\ref{lem:bias} to bound $\|\mH_t - \mH_i\| \leq M_1$, and thus $\sum_{i=0}^{\calI-1} \|\mH_t - \mH_i\| \leq M_1 \calI$.  Moreover, for $i = \calI, \calI+1, \dots, t$, we use the second upper bound in Lemma~\ref{lem:bias} to get  $\|\mH_t - \mH_i\| \leq 2L_2 \|\vx_i-\vx^*\|$. 
    Moreover, note that $\vx_i$ converges linearly to $\vx^*$ when $i \geq \calI$ by Lemma~\ref{lem:linear_phase}. Hence, we further have  
    \begin{equation}\label{eq:bias_intermediate}
        \begin{aligned}
            \frac{1}{t+1} \sum_{i=\calI}^{t} \|\mH_t - \mH_i\| \leq \frac{2L_2}{t+1} \sum_{i=\calI}^t \|\vx_i-\vx^*\| &\leq \frac{2L_2\|\vx_{\calI}-\vx^*\|}{t+1} \sum_{i=0}^{\infty}  \left(1+ \frac{2{\alpha} \beta}{3 \kappa} \right)^{-i/2}\\
            & \leq  \frac{4\nu \mu}{t+1}\left(1+\frac{3\kappa}{2{\alpha}\beta}\right). 
        \end{aligned}
    \end{equation}
    In the last inequality, we used the fact that $\|\vx_{\calI}-\vx^*\| \leq \nu\mu/L_2 $ and $\sum_{i=0}^\infty (1+\phi)^{-i/2} = 1/(1-(1+\phi)^{-1/2}) =(1+\phi)^{1/2}/((1+\phi)^{1/2}-1) = (1+\phi)^{1/2}((1+\phi)^{1/2}+1)/\phi \leq 2(1+1/\phi)$, where $\phi = {2{\alpha} \beta}/{(3 \kappa)}$. Moreover, since $\mathcal{I} \geq 2\left(1+\frac{3\kappa}{2{\alpha} \beta}\right)$ and $\nu \leq 1$, from \eqref{eq:bias_intermediate} we further have $\frac{1}{t+1} \sum_{i=\calI}^{t} \|\mH_t - \mH_i\| \leq \frac{2 \mu \calI}{t+1}$. 
    Combining the above inequalities, we arrive at 
    \begin{equation}\label{eq:bias_bound}
        \|\mH_t - \bar{\mH}_t\| 
        \leq \frac{M_1 \calI}{t+1} + \frac{2\mu \calI}{t+1} \leq \frac{3M_1 \calI}{t+1}.
    \end{equation}
    Combining \eqref{eq:upper_bnd_in_terms_of_error}, \eqref{eq:upper_bnd_on_error}, and \eqref{eq:bias_bound} leads to the first result in \eqref{eq:1/t_first_bound}. Finally, \eqref{eq:1/t_second_bound} follows from the fact that $\|\bar{\mE}_{t}\| \leq \phi(t)$ and $\|\vx_{t} - \vx^*\| \leq \nu\mu/L_2$ for all $t \geq \calI$.
\end{proof}

\begin{lemma}\label{lem:1/t_bound}
    Assume that $\beta \leq 1/2$. 
    For any $t \geq \calI$, we have 
    \begin{equation}\label{eq:1/t_bound}
        \frac{1}{\sqrt{2 \eta_t \mu}} \leq   \frac{\nu}{4\alpha\beta\sqrt{(1-\alpha^2)\beta} } + \frac{\phi(t)}{2\alpha\sqrt{\beta} \mu} + \frac{ 3  \kappa \calI}{2\alpha\sqrt{\beta}(t+1)} = \frac{\nu}{4\alpha\beta\sqrt{(1-\alpha^2)\beta} } + \rho_t, 
    \end{equation}
    where $\rho_t$ is defined in \eqref{eq:def_of_rho}. 
\end{lemma}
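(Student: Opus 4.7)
The plan is to prove Lemma~\ref{lem:1/t_bound} by induction on $t$ starting from $t = \calI$, combining Lemma~\ref{lem:1/t_bound_backtracking} (which handles backtracking indices $t \in \calB$) with the line-search recursion $\eta_t = \eta_{t-1}/\beta$ that holds whenever $t \notin \calB$.

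For the inductive step at $t > \calI$, I would split into two cases. If $t \in \calB$, the bound \eqref{eq:1/t_second_bound} in Lemma~\ref{lem:1/t_bound_backtracking} yields the desired inequality immediately, after identifying $\phi(t)/(2\alpha\sqrt{\beta}\mu)$ with $\frac{4\Upsilon}{\alpha\sqrt{\beta}}\sqrt{\log(d(t+1)/\delta)/(t+1)}$ via $\Upsilon = \Upsilon_E/\mu$, so that the two non-constant terms in \eqref{eq:1/t_second_bound} reconstruct the definition of $\rho_t$ in \eqref{eq:def_of_rho}. If $t \notin \calB$, then $\eta_t = \sigma_t = \eta_{t-1}/\beta$, and applying the induction hypothesis at $t-1$ gives
\begin{equation*}
\frac{1}{\sqrt{2\eta_t\mu}} = \sqrt{\beta}\cdot\frac{1}{\sqrt{2\eta_{t-1}\mu}} \leq \frac{\nu}{4\alpha\sqrt{\beta}\sqrt{(1-\alpha^2)\beta}} + \sqrt{\beta}\,\rho_{t-1}.
\end{equation*}
The first term is at most $\frac{\nu}{4\alpha\beta\sqrt{(1-\alpha^2)\beta}}$ because $\beta \leq \sqrt{\beta}$ for $\beta \in (0,1)$, and the remaining task is to verify $\sqrt{\beta}\,\rho_{t-1} \leq \rho_t$. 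Expanding both sides, this reduces term-by-term to checking $\beta\,\log(dt/\delta)/t \leq \log(d(t+1)/\delta)/(t+1)$ for the stochastic-error contribution and $\sqrt{\beta}(t+1) \leq t$ for the bias contribution; both hold under $\beta \leq 1/2$ once $t$ is at least a small constant, and in particular for all $t \geq \calI \geq \calT_1 \geq 4\log(d/\delta) \geq 4$.

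For the base case $t = \calI$, if $\calI \in \calB$ then Lemma~\ref{lem:1/t_bound_backtracking}\eqref{eq:1/t_second_bound} applies directly. Otherwise, I would appeal to Corollary~\ref{coro:1/M_1_bound} (valid since $\calI \geq \calT_1$) to extract the crude bound $\eta_\calI \geq \alpha\beta/(3M_1)$, yielding $\frac{1}{\sqrt{2\eta_\calI\mu}} \leq \sqrt{3\kappa/(2\alpha\beta)}$. This $\Theta(\sqrt{\kappa})$ bound is dominated by the $\Theta(\kappa)$ bias term already present in $\rho_\calI \geq \frac{3\kappa\calI}{2\alpha\sqrt{\beta}(\calI+1)}$ whenever $\kappa \geq 8\alpha/3$, which is ensured by the strong-convexity and smoothness assumptions; thus the inequality holds trivially in this subcase.

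The main obstacle is the term-by-term verification of $\sqrt{\beta}\,\rho_{t-1} \leq \rho_t$: since $\rho_t$ is itself decreasing, shifting the index by one increases the $\rho$-value, and the contraction factor $\sqrt{\beta}$ from the non-backtracking step must absorb this growth. The assumption $\beta \leq 1/2$ is tailored precisely to make both the logarithmic-stochastic-error comparison and the $1/(t+1)$-bias comparison go through uniformly for $t \geq \calI$, and this is where the hypothesis of the lemma is used.
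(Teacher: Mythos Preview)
Your proposal is correct and follows essentially the same induction argument as the paper: base case at $t=\calI$ via the crude bound $\eta_\calI\geq\alpha\beta/(3M_1)$ from Corollary~\ref{coro:1/M_1_bound}, and inductive step split according to whether $t\in\calB$ (Lemma~\ref{lem:1/t_bound_backtracking}) or $t\notin\calB$ (recursion $\eta_t=\eta_{t-1}/\beta$ plus the term-by-term check $\sqrt{\beta}\,\rho_{t-1}\leq\rho_t$ under $\beta\leq 1/2$). The only quibble is that your condition $\kappa\geq 8\alpha/3$ is not actually guaranteed by any stated assumption, but the paper's own proof invokes the analogous unjustified claim $\kappa\geq 1$ at the same spot, so this is a shared minor imprecision rather than a flaw in your argument.
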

\begin{proof}
    We shall use induction to prove Lemma~\ref{lem:1/t_bound}. For $t = \calI$, note that by Corollary~\ref{coro:1/M_1_bound}, we have $\eta_{\calI} \geq {{\alpha} \beta}/{ (3M_1)}$. Thus, this implies that 
    \begin{equation*}
        \frac{1}{\sqrt{2 \eta_\calI \mu}} \leq \frac{\sqrt{3\kappa}}{\sqrt{2{\alpha} \beta}} \leq \frac{ 3 \kappa \calI}{2\alpha\sqrt{\beta}(\calI+1)},
    \end{equation*}
    where we used the fact that $\kappa \geq 1$, $\alpha<1$, $\beta \leq 1/2$ and $\calI \geq 4$. This proves the base case where $t = \calI$. 

    Now assume that \eqref{eq:1/t_bound} holds for $t = s$, where $s \geq \calI$. For $t = s+1$, we distinguish two subcases. If $s+1 \in \calB$, then by Lemma~\ref{lem:1/t_bound_backtracking} we obtain that \eqref{eq:1/t_bound} is satisfied for $t = s+1$. Otherwise, if $s+1 \notin \calB$, then we have $\eta_{s+1} = \sigma_{s+1} = {\eta_s}/{\beta}$. Hence, by using the induction hypothesis, we have 
    \begin{equation*}
        \frac{1}{\sqrt{2 \eta_{s+1}\mu}}  = \frac{\sqrt{\beta}}{\sqrt{2\eta_{s} \mu}} \leq \frac{\nu}{4\alpha\beta\sqrt{(1-\alpha^2)\beta} } + \frac{\sqrt{\beta}\phi(s)}{2\alpha\sqrt{\beta} \mu} + \frac{3\sqrt{\beta}  \kappa \calI}{2\alpha\sqrt{\beta}(s+1)}. 
    \end{equation*}
    Since $\beta \leq 1/2$ and $\calI \geq 2$, we have $(s+2)/(s+1) \leq (\calI+2)/(\calI+1) \leq 1.4 \leq 1/\sqrt{\beta}$ and $\phi(s) \leq \phi(s+1)/\sqrt{\beta}$. Thus, we further have 
    \begin{equation*}
        \frac{1}{\sqrt{ 2\eta_{s+1}\mu}}  \leq \frac{\nu}{4\alpha\beta\sqrt{(1-\alpha^2)\beta} } + \frac{\phi(s+1)}{2\alpha\sqrt{\beta} \mu} + \frac{ 3 \kappa \calI}{2\alpha\sqrt{\beta}(s+2)}. 
    \end{equation*} 
    This shows that \eqref{eq:1/t_bound} also holds in this subcase. This completes the induction and we conclude that \eqref{eq:1/t_bound} holds for all $t \geq \calI$. 
\end{proof}

Before proving Lemma~\ref{lem:superlinear_induction}, recall the definition of $\phi$ in \eqref{def:phi} and first define  
\begin{equation}\label{eq:def_T_2'}
    \mathcal{I}' = \sup_{t}  \left\{t: \phi(t) \geq {\nu\mu} \right\} \quad \text{and} \quad \mathcal{T}'_2 = \max\left\{\mathcal{I}', \frac{\kappa \calI}{\nu}-1\right\}. 
\end{equation}
Note that we have $\phi(t) \leq \nu \mu$ when $t \geq \frac{256\Upsilon^2}{\nu^2}\log \frac{8d\Upsilon}{\delta \nu }$. Hence, by the definition of \eqref{eq:def_T2}, it holds that $\mathcal{T}_2 \geq \mathcal{T}_2'$. 

\begin{lemma}\label{lem:superlinear_induction}
    Recall the definition of $\rho_t$ in \eqref{eq:def_of_rho}. For any $t \geq \calT'_2$, we have 
    \begin{equation}\label{eq:superlinear_induction} 
        \frac{ L_2 \| \vx_{t} - {\vx}^*\|}{2\alpha\sqrt{\beta} \mu } \leq \rho_t \quad \text{and} \quad \frac{1}{\sqrt{2\mu \eta_{t}}} \leq \left(\frac{1}{2\beta\sqrt{1-\alpha^2}}+5\right)\rho_t.
    \end{equation} 
\end{lemma}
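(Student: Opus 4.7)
The plan is to prove \eqref{eq:superlinear_induction} by strong induction on $t \geq \calT'_2$, establishing both claims simultaneously. The crucial ingredients are (i) Lemma~\ref{lem:dis_bound} for a local neighborhood bound, (ii) Lemma~\ref{lem:1/t_bound_backtracking} and Lemma~\ref{lem:1/t_bound} to control $1/\sqrt{2\eta_t\mu}$, (iii) the contraction \eqref{eq:contraction} from Proposition~\ref{prop:HPE} to propagate the distance bound, and (iv) the smallness condition on $\nu$ to close the recursion.

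For the base case $t = \calT'_2$, since $\calT'_2 \geq \calI$, Lemma~\ref{lem:dis_bound} gives $L_2\|\vx_{\calT'_2}-\vx^*\|/(2\alpha\sqrt{\beta}\mu) \leq \nu/(2\alpha\sqrt{\beta})$. To show this is $\leq \rho_{\calT'_2}$, I would split on which argument realizes the maximum in the definition $\calT'_2 = \max\{\calI', \kappa\calI/\nu - 1\}$. If $\calT'_2 = \kappa\calI/\nu - 1$, the bias term of $\rho_{\calT'_2}$ is exactly $3\nu/(2\alpha\sqrt{\beta})$, which dominates; if $\calT'_2 = \calI'$, the definition of $\calI'$ ensures $\phi(\calI') \geq \nu\mu$, so the stochastic term is $\geq \nu/(2\alpha\sqrt{\beta})$. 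Either way the first claim holds. The second claim at the base follows from Lemma~\ref{lem:1/t_bound}, since the lower bound $\rho_{\calT'_2} \geq \nu/(2\alpha\sqrt{\beta})$ lets us absorb the residual term $\nu/(4\alpha\beta\sqrt{(1-\alpha^2)\beta})$ into a suitable multiple of $\rho_{\calT'_2}$, with the coefficient $\tfrac{1}{2\beta\sqrt{1-\alpha^2}} + 5$ accommodating it.

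For the inductive step at $s+1$, I first use \eqref{eq:contraction} and the inductive hypothesis at $s$ to write $L_2\|\vx_{s+1}-\vx^*\|/(2\alpha\sqrt{\beta}\mu) \leq (\tfrac{1}{2\beta\sqrt{1-\alpha^2}}+5)\rho_s^2$. Showing this is $\leq \rho_{s+1}$ amounts to verifying $(\tfrac{1}{2\beta\sqrt{1-\alpha^2}}+5)\rho_s \leq \rho_{s+1}/\rho_s$; since $\rho_s \leq \rho_{\calT'_2} = O(\nu)$ (up to $\alpha, \beta$ factors), the standing assumption on $\nu$ makes the left-hand side small, while the right-hand side is bounded below by $\sqrt{\beta}$ using $\phi(s+1)/\phi(s) \geq \sqrt{\beta}$ and $(s+1)/(s+2) \geq \sqrt{\beta}$ for $\beta \leq 1/2$ and $s$ sufficiently large (as in the induction for Lemma~\ref{lem:1/t_bound}).

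For the second claim at $s+1$, I split on whether $s+1 \in \calB$. If $s+1 \in \calB$, Lemma~\ref{lem:1/t_bound_backtracking} applies; substituting the now-proven first claim at $s+1$ together with $\|\bar{\mE}_{s+1}\| \leq \phi(s+1)$ directly yields $1/\sqrt{2\eta_{s+1}\mu} \leq (\tfrac{1}{2\beta\sqrt{1-\alpha^2}}+1)\rho_{s+1}$, which is stronger than required. If $s+1 \notin \calB$, then $\eta_{s+1} = \eta_s/\beta$, and combining the inductive hypothesis with $\sqrt{\beta}\rho_s \leq \rho_{s+1}$ gives the desired inequality. The main obstacle is bookkeeping: tracking constants carefully so that the recursion closes, which is exactly what the precise numerical form of the smallness assumption on $\nu$ is tailored to ensure. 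Once Lemma~\ref{lem:superlinear_induction} is established, Theorem~\ref{thm:superlinear_phase} follows immediately by combining \eqref{eq:contraction} with the second claim of \eqref{eq:superlinear_induction}.
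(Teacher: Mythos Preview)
Your proposal is correct and follows essentially the same approach as the paper's proof: induction on $t \geq \calT'_2$, with the base case handled via the case split on $\calT'_2 = \max\{\calI',\kappa\calI/\nu-1\}$ together with Lemmas~\ref{lem:dis_bound} and~\ref{lem:1/t_bound}, and the inductive step split on whether $s+1 \in \calB$. The only minor variation is that in propagating the first claim you bound $\rho_{s+1}/\rho_s \geq \sqrt{\beta}$, whereas the paper uses the slightly cleaner observation $\rho_{s+1} \geq \rho_s/2$ and reduces to $\sqrt{2}\bigl(\tfrac{1}{2\beta\sqrt{1-\alpha^2}}+5\bigr)\rho_s \leq 1$; both routes close via the same smallness condition on $\nu$ after invoking the upper bound $\rho_{\calT'_2} \leq 5\nu/(2\alpha\sqrt{\beta})$, which you should state explicitly rather than leave as ``$O(\nu)$''.
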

\begin{proof}
    By Lemma~\ref{lem:1/t_bound_backtracking}, if $t \in \calB$, then 
    \begin{equation*}
        \frac{1}{\sqrt{\mu \eta_{t}}} \leq  \frac{L_2 \| \vx_{t} - {\vx}^*\|}{2\alpha\beta\sqrt{2(1-\alpha^2)\beta} \mu } + \rho_t.   
    \end{equation*}

    We shall prove \eqref{eq:superlinear_induction} by induction. 
    First consider the base case where $t=\calT_2'$, where $\calT_2'$ is defined in \eqref{eq:def_T_2'}.  
    To begin with, we will show that $\frac{\nu}{2\alpha\sqrt{\beta}} \leq \rho_{\calT_2'} \leq \frac{5\nu}{2\alpha\sqrt{\beta}}$. Since $\calT_2'$ is the maximum of $\calI'$ and $\frac{\kappa \mathcal{I}}{\nu}$, we have either $\calT_2' = \calI'$ or $\calT_2' = \frac{\kappa \calI}{\nu}-1$. In the former case, we can lower bound  
    $
        \rho_{\calT'_2} \geq \frac{1}{2\alpha\sqrt{\beta} \mu}\phi(\calI') \geq \frac{\nu}{2\alpha\sqrt{\beta}}
    $. In the latter case, we can lower bound $\rho_{\calT'_2} \geq \frac{3\kappa \calI}{2\alpha\sqrt{\beta}(\calT_2'+1)} = \frac{3\nu}{2\alpha\sqrt{\beta}}$. Combining both cases leads to the lower bound on $\rho_{\calT_2'}$. Furthermore, note that both two terms in $\rho_t$ are a decreasing function in terms of $t$, and hence we have 
    $$\rho_{\calT_2'} \leq \frac{1}{2\alpha\sqrt{\beta}\mu} \phi(\calI') + \frac{3\kappa \calI}{2\alpha\sqrt{\beta}(\kappa\calI/\nu)} \leq \frac{2}{2\alpha\sqrt{\beta}\mu} \phi(\calI'+1) + \frac{3\nu}{2\alpha\sqrt{\beta}} \leq \frac{5\nu}{2\alpha\sqrt{\beta}}. $$
    This proves the upper bound on $\rho_{\calT_2'}$. 

    Now we return to the proof in the base case where $t = \calT_2'$. since $\|\vx_{\calT'_2}-\vx^*\| \leq \nu\mu/L_2$ by Lemma~\ref{lem:dis_bound}, we obtain that $\frac{L_2 \| \vx_{\calT'_2} - {\vx}^*\|}{2\alpha\sqrt{\beta} \mu } \leq \frac{\nu}{2\alpha\sqrt{ \beta}} \leq \rho_{\calT'_2}$. Moreover, 
    by Lemma~\ref{lem:1/t_bound}, we have 
    \begin{equation*}
        \frac{1}{\sqrt{2 \eta_{\calT'_2}\mu}} \leq   \frac{\nu}{4\alpha\beta\sqrt{(1-\alpha^2)\beta} } + \rho(\calT_2') \leq  \frac{\nu}{4\alpha\beta\sqrt{(1-\alpha^2)\beta} } + \frac{ 5 \nu}{2\alpha\sqrt{\beta}} \leq \left(\frac{1}{2\beta\sqrt{1-\alpha^2}}+5\right)\rho_{\calT'_2} .  
    \end{equation*} 
    This shows that \eqref{eq:superlinear_induction} holds for $t = \calT_2'$. 

    Now assume that \eqref{eq:superlinear_induction} holds for $t = s \geq \calT'_2$. For $t = s+1$, by using the induction hypothesis and \eqref{eq:contraction}, we obtain that  
    \begin{equation*}
        \frac{L_2\|\vx_{s+1} - \vx^*\|}{2\alpha\sqrt{ \beta} \mu} \leq \frac{L_2\|\vx_{s}-\vx^*\|}{2\alpha\sqrt{\beta} \mu \sqrt{2\eta_{s}\mu}} \leq \frac{1}{ \sqrt{2}}  \left(\frac{1}{2\beta\sqrt{1-\alpha^2}}+5\right) \rho_s^2.
    \end{equation*}
    Moreover, since $\rho_s/2 \leq \rho_{s+1}$, it suffices to show that $\frac{1}{ \sqrt{2}}  \left(\frac{1}{2\beta\sqrt{1-\alpha^2}}+5\right)\rho_s^2 \leq \rho_s/2$, which is equivalent to $ \sqrt{2}\left(\frac{1}{2\beta\sqrt{1-\alpha^2}}+5\right) \rho_s \leq 1$. Furthermore, since $\rho_s$ is non-increasing, we further have 
    \begin{equation*}
        \sqrt{2}\left(\frac{1}{2\beta\sqrt{1-\alpha^2}}+5\right) \rho_s \leq \sqrt{2}\left(\frac{1}{2\beta\sqrt{1-\alpha^2}}+5\right) \rho_{\calT_2'} \leq  \sqrt{2}\left(\frac{1}{2\beta\sqrt{1-\alpha^2}}+5\right) \frac{5\nu}{2\alpha\sqrt{\beta}} \leq 1,
    \end{equation*}
    where we used the condition on $\nu$ stated in Theorem~\ref{thm:superlinear_phase} in the last inequality. This proves the first inequality in \eqref{eq:superlinear_induction}.

    To prove the second inequality in \eqref{eq:superlinear_induction} for $t = s+1$, we distinguish two subcases. If $s+1 \in \calB$, then by Lemma~\ref{lem:1/t_bound_backtracking}, we have 
    \begin{align*}
        \frac{1}{\sqrt{2 \eta_{s+1}\mu}} \leq  \frac{L_2 \| \vx_{s+1} - {\vx}^*\|}{4\alpha\beta\sqrt{(1-\alpha^2)\beta} \mu } + \frac{\|\bar{\mE}_{s+1}\|}{2\alpha\sqrt{\beta} \mu} + \frac{3\kappa \calI}{2\alpha\sqrt{\beta}(s+1)} 
        &\leq \frac{1}{2\beta\sqrt{1-\alpha^2}} \rho_{s+1} + \rho_{s+1} \\
        & \leq \left(\frac{1}{2\beta\sqrt{1-\alpha^2}}+5\right)\rho_{s+1}. 
    \end{align*}
    Otherwise, if $s+1 \notin \calB$, then we have $\eta_{s+1} =\eta_{s}/\beta$ and hence 
    \begin{equation*}
        \frac{1}{\sqrt{2\eta_{s+1}\mu}} = \frac{\sqrt{\beta}}{\sqrt{2 \eta_{s}\mu}} \leq \left(\frac{1}{2\beta\sqrt{1-\alpha^2}}+5\right) \sqrt{\beta}\rho_{s}. 
    \end{equation*}
    Since $\calT'_2 \geq \calI \geq 2$ and $\beta \leq 1/2$, we have $\rho_s /\rho_{s+1} \leq (\calI+2)/(\calI+1) \leq 1.4 \leq 1/\sqrt{\beta}$. Thus, we also proved that $\frac{1}{\sqrt{\mu \eta_{s+1}}} \leq \left(\frac{1}{2\beta\sqrt{1-\alpha^2}}+5\right) \rho_{s+1}$. This completes the induction.
\end{proof}

\begin{proof}[Proof of Theorem~\ref{thm:superlinear_phase}]
    It immediately follows from \eqref{eq:contraction} and Lemma~\ref{lem:superlinear_induction}. 
\end{proof}

\section{Missing Proofs in Section~\ref{sec:weighted_averaging}}
In this section, we will present the formal version of Theorem~\ref{thm:non_uniform_main_thm}. Our proof largely mirrors the developments in Section~\ref{sec:uniform_averaging}. 
\subsection{Approximation error analysis}
\label{appen:approx_error_analysis_non}
\noindent\textbf{Averaged stochastic error.} Similar to Lemma~\ref{lem:stochastic_error}, we can use tools from concentration inequalities to prove the following upper bound on the averaged stochastic error. 
\begin{lemma}[{\cite[Lemma 6]{na2022hessian}}]\label{lem:error_concentration_non}
    Let $\delta \in (0,1)$ with $d/\delta \geq e$. Then with probability $1-\delta \pi^2/6$, we have, for any $t\geq 0$,
    \begin{equation}\label{eq:error_concentration_non}
        \| \bar{\mE}_t \| \leq 8 {\Psi} \Upsilon_E \max \left\{ \sqrt{\log \Bigl(\frac{d(t+1)}{\delta}\Bigr) \frac{w'(t)}{w(t)}} 
        , \log\Bigl(\frac{d(t+1)}{\delta}\Bigr) \frac{w'(t)}{w(t)}\right\}. 
    \end{equation}
\end{lemma}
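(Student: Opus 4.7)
The proof plan centers on identifying $\{\mE_i\}_{i\ge 0}$ as a matrix-valued martingale difference sequence and applying a matrix Bernstein/Freedman inequality to the weighted sum $\bar{\mE}_t = \sum_{i=0}^t z_{i,t}\mE_i$, where $z_{i,t} = (w_i - w_{i-1})/w_t$. The mean-zero and sub-exponential moment conditions of Assumption~\ref{assum:subexp_noise} give each $\mE_i$ the Bernstein-type moment bound $\E[\|\mE_i\|^p] \le p!\,\Upsilon_E^p/2$, conditional on the history $\mathcal{F}_{i-1}$ generated by $\vx_0, \hat{\mH}_0, \ldots, \vx_i$ (since $\vx_i$ is $\mathcal{F}_{i-1}$-measurable). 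A standard symmetrization/dilation trick converts the norm bound into a bound on the symmetric dilation matrix, so we can invoke an operator-norm martingale Bernstein inequality.

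To apply such an inequality, the two parameters we need to control are the predictable variance proxy $\sigma_t^2 := \sum_{i=0}^t z_{i,t}^2\,\Upsilon_E^2$ and the almost-sure bound on each summand, which scales like $R_t := \max_{i\le t} z_{i,t}\cdot \Upsilon_E$. Using Assumption~\ref{assum:w}: since $w$ is convex with $w'' \ge 0$, $w'$ is non-decreasing, so $w_i - w_{i-1} = \int_{i-1}^{i} w'(s)\,ds \le w'(i) \le w'(t)$ for all $i \le t$. Telescoping then yields
\begin{equation*}
\sum_{i=0}^t (w_i - w_{i-1})^2 \le w'(t) \sum_{i=0}^t (w_i - w_{i-1}) = w'(t)\, w(t),
\end{equation*}
so $\sigma_t^2 \le \Upsilon_E^2\, w'(t)/w(t)$. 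Similarly, $\max_i z_{i,t} = z_{t,t} = (w_t-w_{t-1})/w_t \le w'(t)/w(t)$, giving $R_t \le \Upsilon_E\, w'(t)/w(t)$. The factor $\Psi$ enters when comparing $w'(t-1)$ to $w'(t)$ or $w(t-1)$ to $w(t)$ in intermediate estimates, and in keeping the predictable variance only slightly larger than the deterministic bound above.

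Plugging these into the matrix Bernstein tail for a fixed $t$, $\Pr(\|\bar{\mE}_t\| \ge s) \le 2d\exp\!\bigl(-\tfrac{s^2/2}{\sigma_t^2 + R_t s/3}\bigr)$, and inverting at confidence $\delta/(t+1)^2$ gives the two-regime bound: the sub-Gaussian regime $s \asymp \Upsilon_E \sqrt{\log(d(t+1)/\delta)\,w'(t)/w(t)}$ when the variance dominates, and the sub-exponential regime $s \asymp \Upsilon_E \log(d(t+1)/\delta)\,w'(t)/w(t)$ otherwise. Taking the maximum recovers the stated bound up to the constant 8 and the factor $\Psi$. Finally, a union bound over all $t \ge 0$ with failure probability $\delta/(t+1)^2$ costs a total of $\delta\sum_{t\ge 0}(t+1)^{-2} = \delta\pi^2/6$, which is exactly the probability stated.

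The main obstacle is the variance/maximum-weight estimate: one must use the regularity conditions on $w$ (convexity, positivity, the ratio bound $\Psi$) to prove $\sum z_{i,t}^2 \lesssim w'(t)/w(t)$ uniformly in $t$. The monotonicity of $w'$ supplies the telescoping argument cleanly, but extracting the $\Psi$-dependence in the constant requires careful bookkeeping whenever $w'(t)/w'(t-1)$ or $w(t)/w(t-1)$ shows up (e.g., when relating $(w_i-w_{i-1})^2$ to $w'(i)(w_i - w_{i-1})$ and then to $w'(t)$ via at most a $\Psi$-factor per step). The rest is a routine matrix Freedman argument, modulo verifying the martingale structure holds with respect to the natural filtration generated by the stochastic Hessian queries.
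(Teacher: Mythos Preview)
The paper does not prove this lemma; it is quoted verbatim from \cite[Lemma 6]{na2022hessian} and used as a black box. Your proposed argument---matrix Freedman/Bernstein for the martingale differences $z_{i,t}\mE_i$, the variance bound $\sum_i z_{i,t}^2 \le w'(t)/w(t)$ via convexity of $w$ and telescoping, the maximum-increment bound $\max_i z_{i,t}\le w'(t)/w(t)$, and the union bound costing $\delta\pi^2/6$---is exactly the approach taken in the cited reference, so your plan is correct and aligned with the original source.
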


\subsection{Convergence analysis}
\label{subsec:convergence_weighted}

\myalert{Warm-up phase.} Similar to the case of uniform averaging, we can only ensure that the distance to $\vx^*$ is monotonically non-increasing by Proposition~\ref{prop:HPE} during this phase. Moreover, Algorithm~\ref{alg:stochastic_NPE} transitions to the linear phase when $\|\bar{\mE}_t\| \leq M_1$. Specifically, the transition point $\calU_1$ is given by 
\begin{equation}\label{eq:def_U_1}
    \calU_1 = \sup_{t}\left\{t \geq \log_{\frac{1}{\beta}}\!\frac{{\alpha\beta}}{3M_1 \sigma_0}: \log \Bigl(\frac{d(t+1)}{\delta}\Bigr) \frac{w'(t)}{w(t)} \geq \Bigl(1\wedge \frac{\kappa}{8\Upsilon}\Bigr)^2 \right\}+1.
\end{equation}
When $w(t) = (t+1)^{\log(t+4)}$, we have $w'(t)/w(t) = \bigO\left(\log(t)/t\right)$. Thus, we conclude that $\calU_1 = \tilde{\bigO}(\Upsilon^2/\kappa^2)$. 

\myalert{Linear convergence phase}. 
In the following lemma, we prove the linear convergence of Algorithm~\ref{alg:stochastic_NPE} with weighted averaging. 
\begin{lemma}\label{lem:linear_phase_non}
    Assume that $\beta \leq 1/\Psi^2$ and 
    recall the definition of $\calU_1$ in \eqref{eq:def_U_1}. 
    For any $t \geq \calU_1$, we have 
    \begin{equation*}
        \|\vx_{t+1}-\vx^*\|^2 \leq \|\vx_{t}-\vx^*\|^2 \left(1+ \frac{{2\alpha} \beta}{3 \kappa} \right)^{-1}, 
    \end{equation*}
    where $\kappa \triangleq M_1/\mu$ is the condition number.
\end{lemma}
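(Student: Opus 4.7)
The plan is to mirror the three-step proof of Lemma~\ref{lem:linear_phase}, adapting each step to the non-uniform averaging weights. Concretely, I would (i) bound $\eta_t$ from below on backtracking iterations, (ii) extend this bound to all $t$ by induction, and (iii) apply Proposition~\ref{prop:HPE} to obtain the claimed linear rate.

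First, for any $t \in \calB$, Lemma~\ref{lem:step_size_bnd} gives $\eta_t \geq \alpha\beta/\mathcal{E}_t$. I would decompose
$\mathcal{E}_t \leq \|\vg(\tilde{\vx}_t)-\vg(\vx_t)-\mH_t(\tilde{\vx}_t-\vx_t)\|/\|\tilde{\vx}_t-\vx_t\| + \|\mH_t-\bar{\mH}_t\| + \|\bar{\mE}_t\|$
as in \eqref{eq:error_decomp}. The first term is $\leq M_1$ by Lemma~\ref{lem:linear_approx}. For the bias term, the key observation replacing Lemma~\ref{lem:bias} is that the non-uniform weights $z_{i,t}=(w_i-w_{i-1})/w_t$ are non-negative (since $w''\geq 0$ and $w'(-1)\geq 0$, $w$ is non-decreasing) and telescope to $\sum_{i=0}^t z_{i,t}=1$ because $w_{-1}=0$. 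Hence
$\|\mH_t-\bar{\mH}_t\| \leq \sum_{i=0}^t z_{i,t}\|\mH_t-\mH_i\| \leq M_1$
via Lemma~\ref{lem:bias}'s first bound. Combined with Lemma~\ref{lem:error_concentration_non}, this yields $\eta_t \geq \alpha\beta/(2M_1+\tilde{\phi}(t))$, where $\tilde{\phi}(t)$ denotes the right-hand side of \eqref{eq:error_concentration_non}.

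Second, I would run an induction analogous to Lemma~\ref{lem:step_size_bnd_induction} to show $\eta_t \geq \min\{\alpha\beta/(2M_1+\tilde{\phi}(t)),\sigma_0/\beta^t\}$ for every $t \geq 0$. The base case and the $t+1\in\calB$ case are immediate. For $t+1\notin\calB$, we have $\eta_{t+1}=\eta_t/\beta$, so the inductive step reduces to verifying $\tilde{\phi}(s)/\tilde{\phi}(s+1)\leq 1/\beta$. Using $L(s)\leq L(s+1)$ (where $L(s)=\log(d(s+1)/\delta)$), $w'(s)\leq w'(s+1)$, and Assumption~\ref{assum:w}(v) which yields $w(s+1)/w(s)\leq \Psi$ and $w'(s+1)/w'(s)\leq \Psi$, one bounds each branch of the max in $\tilde{\phi}$ separately; the worst branch contributes a factor at most $\Psi^2$, so the assumption $\beta\leq 1/\Psi^2$ suffices.

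Third, by the definition of $\calU_1$ in \eqref{eq:def_U_1}, for $t\geq\calU_1$ we have $L(t)w'(t)/w(t) < (1\wedge \kappa/(8\Upsilon))^2$, which (absorbing the factor $\Psi$ into the thresholding condition as in the definition) yields $\tilde{\phi}(t)\leq M_1$. The lower bound $t\geq \log_{1/\beta}(\alpha\beta/(3M_1\sigma_0))$ built into \eqref{eq:def_U_1} ensures the first branch of the inductive minimum is active, so $\eta_t\geq \alpha\beta/(3M_1)$. Substituting into Proposition~\ref{prop:HPE} gives $\|\vx_{t+1}-\vx^*\|^2\leq \|\vx_t-\vx^*\|^2(1+2\alpha\beta/(3\kappa))^{-1}$, as claimed. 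The main obstacle is the ratio estimate $\tilde{\phi}(s)/\tilde{\phi}(s+1)\leq 1/\beta$ in the induction, because the max in \eqref{eq:error_concentration_non} can switch between its square-root and linear branches across consecutive iterations, and the ratio involves both $w$ and $w'$; this is precisely what forces the slightly stronger step-size requirement $\beta\leq 1/\Psi^2$ instead of the $\beta\leq 1/2$ used in the uniform case.
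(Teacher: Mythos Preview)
Your proposal is correct and follows essentially the same three-step route as the paper: a backtracking step-size bound (the paper's Lemma~\ref{lem:step_size_bnd_backtracking_non}), an induction extending it to all $t$ (Lemma~\ref{lem:step_size_bnd_induction_non}), and then Corollary~\ref{coro:1/M_1_bound_non} plus Proposition~\ref{prop:HPE}. One small sharpening: in the inductive ratio estimate the paper uses $w'(s)\leq w'(s+1)$ (from $w''\geq 0$) together with $w(s+1)/w(s)\leq\Psi$ to get $\phi(s)/\phi(s+1)\leq \Psi$ rather than $\Psi^2$, so the induction itself only needs $\beta\leq 1/\Psi$; the stronger hypothesis $\beta\leq 1/\Psi^2$ in the lemma statement is there because it is used later in Lemma~\ref{lem:1/t_bound_non}.
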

\begin{proof}
    See Appendix~\ref{appen:linear_phase_non}. 
\end{proof}

\myalert{Superlinear convergence phase.}
Define 
    \begin{equation}\label{eq:def_J_1_non}
        \calJ' \!=\! \sup_{t} \left\{t: \log \Bigl(\frac{d(t+1)}{\delta}\Bigr) \frac{w'(t)}{w(t)} \geq \Bigl(1\wedge \frac{1}{8\Upsilon}\Bigr)^2 \right\}+1. %
    \end{equation}
Moreover, let $\nu \in (0,1)$ be a parameter and define 
\begin{equation}\label{eq:def_J}
    \mathcal{J} = \max\left\{ \calU_1 + 2\Bigl(1+\frac{2\kappa}{{\alpha} \beta}\Bigr) \log \frac{L_2D}{\nu\mu}, \calJ_2' \right\}. 
\end{equation}
Finally, let 
\begin{equation}\label{eq:def_U_2}
    \calU_2 = \sup_{t} \left\{t: w(t) \leq w(\calJ) \frac{\kappa}{\nu} \right\}.
 \end{equation}
When $w(t) = (t+1)^{\log(t+4)}$, we remark that $ \mathcal{J}' = \tilde{\bigO}(\Upsilon^2)$ and thus $\calJ = \tilde{\bigO}(\kappa + \Upsilon^2)$. Moreover, similar to the derivation in \cite{na2022hessian}, it can be shown that $\calU_2 = \bigO(\calJ) = \tilde{\bigO}(\kappa + \Upsilon^2)$. 
\begin{theorem}\label{thm:superlinear_phase_non}
    Let $\nu \in (0,1)$ be a parameter satisfying 
    \begin{equation}\label{eq:inequality_on_nu}
        \left(\frac{1}{2\alpha\beta\sqrt{(1-\alpha^2)\beta} } + \frac{5}{\alpha\sqrt{\beta}}\right)\nu \leq \frac{1}{\Psi}, 
    \end{equation}
    and recall the definition of $\calU_2$ in \eqref{eq:def_U_2}. 
    For any $t \geq \calU_2$, we have 
    \begin{equation*}
        \|\vx_{t+1}-\vx^*\| \leq \left(\frac{1}{10\beta\sqrt{2(1-\alpha^2)}}+\frac{1}{\sqrt{2}}\right)\theta_t \|\vx_{t}-\vx^*\|,
    \end{equation*}
    where 
    \begin{equation}\label{eq:def_of_theta}
        \theta_t = \frac{8 {\Psi} \Upsilon_E}{\alpha\sqrt{2\beta} \mu} \sqrt{\log \Bigl(\frac{d(t+1)}{\delta}\Bigr) \frac{w'(t)}{w(t)}} + \frac{5 \kappa w(\calJ)}{\alpha\sqrt{2\beta}w(t)}. 
    \end{equation}
\end{theorem}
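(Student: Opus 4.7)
The plan is to follow the same four-step architecture as the proof of Theorem~\ref{thm:superlinear_phase} for uniform averaging, replacing each ingredient by its weighted-averaging counterpart. First, I would establish a ``neighborhood'' lemma analogous to Lemma~\ref{lem:dis_bound}: by iterating the linear contraction from Lemma~\ref{lem:linear_phase_non} starting at $\calU_1$ and using the definition~\eqref{eq:def_J} of $\calJ$, I would obtain $\|\vx_t-\vx^*\|\leq \nu\mu/L_2$ for all $t\geq\calJ$.

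Next I would derive a weighted bias bound of the form $\|\mH_t-\bar\mH_t\|\leq C_1\, M_1\,w(\calJ)/w(t)$ for an absolute constant $C_1$. Writing $\bar\mH_t=\sum_{i=0}^t z_{i,t}\mH_i$ with $z_{i,t}=(w_i-w_{i-1})/w_t$, I would split the sum at index $\calJ$: the early part is bounded by the first inequality in Lemma~\ref{lem:bias}, contributing $M_1\, w(\calJ-1)/w(t)$, while the late part is bounded by the second inequality in Lemma~\ref{lem:bias} combined with the geometric decay from Lemma~\ref{lem:linear_phase_non}, producing a series in $\|\vx_i-\vx^*\|$ dominated by the $i=\calJ$ term. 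Feeding this bias bound together with Lemmas~\ref{lem:step_size_bnd}, \ref{lem:linear_approx}, and \ref{lem:error_concentration_non} into the decomposition~\eqref{eq:bound_normalized_approx} yields the analog of Lemma~\ref{lem:1/t_bound_backtracking}: for $t\in\calB$ with $t\geq\calJ$,
\[
\frac{1}{\sqrt{2\eta_t\mu}}\leq \frac{L_2\|\vx_t-\vx^*\|}{4\alpha\beta\sqrt{(1-\alpha^2)\beta}\,\mu}+\frac{\theta_t}{2}.
\]

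I would then extend this pointwise bound to all $t\geq\calJ$ (analog of Lemma~\ref{lem:1/t_bound}) by induction. In iterations with $t\notin\calB$, $\eta_t=\eta_{t-1}/\beta$, so $1/\sqrt{2\eta_t\mu}$ shrinks by a factor $\sqrt\beta$; Assumption~\ref{assum:w}(v) gives $\theta_{t-1}/\theta_t\leq \Psi$, so the bound is preserved provided $\beta\leq 1/\Psi^2$, which is part of the standing hypotheses. Finally, I would run a coupled induction (analog of Lemma~\ref{lem:superlinear_induction}) showing simultaneously that
\[
\frac{L_2\|\vx_t-\vx^*\|}{2\alpha\sqrt{\beta}\,\mu}\leq \frac{\theta_t}{\sqrt 2}\qquad\text{and}\qquad \frac{1}{\sqrt{2\eta_t\mu}}\leq C'\theta_t,
\]
with $C'=\tfrac{1}{10\beta\sqrt{2(1-\alpha^2)}}+\tfrac{1}{\sqrt 2}$, for every $t\geq\calU_2$. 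The base case at $t=\calU_2$ follows from the step-1 distance bound together with the definition~\eqref{eq:def_U_2} of $\calU_2$, which makes $5\kappa w(\calJ)/(\alpha\sqrt{2\beta}\,w(\calU_2))\geq \nu/(\alpha\sqrt{2\beta})$. For the inductive step, \eqref{eq:contraction} propagates the distance bound as $L_2\|\vx_{t+1}-\vx^*\|/(2\alpha\sqrt{\beta}\mu)\leq C'\theta_t^2/\sqrt 2$, and condition~\eqref{eq:inequality_on_nu} on $\nu$ is calibrated precisely so that $\sqrt 2\,C'\theta_t\leq \theta_{t+1}/\theta_t$ once $t\geq\calU_2$. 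The step-size inequality is then propagated case-wise, using the result for $t+1\in\calB$ and the $\sqrt\beta\cdot\Psi\leq 1$ monotonicity argument for $t+1\notin\calB$. The theorem then follows from the second inequality via~\eqref{eq:contraction}.

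The main obstacle is the bookkeeping around weight ratios in the two inductions when $t\notin\calB$: each such step multiplies $1/\sqrt{2\eta_t\mu}$ by $\sqrt\beta$, and one needs $\theta_t$ to decay no faster than $\sqrt\beta$ per step to absorb this factor. This is exactly what $\beta\leq 1/\Psi^2$ guarantees via the bounds on $w(t+1)/w(t)$ and $w'(t+1)/w'(t)$ in Assumption~\ref{assum:w}(v). A secondary subtlety is verifying the base case at $t=\calU_2$: the definition~\eqref{eq:def_U_2} is engineered so that $\theta_{\calU_2}\asymp \nu/(\alpha\sqrt\beta)$, and condition~\eqref{eq:inequality_on_nu} on $\nu$ is tuned precisely so that $\sqrt 2\,C'\theta_{\calU_2}\leq 1/\Psi$, making the induction self-sustaining.
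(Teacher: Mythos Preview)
Your four-step architecture mirrors the paper's, but step~2 has a genuine gap: the bias bound $\|\mH_t-\bar\mH_t\|\leq C_1 M_1\,w(\calJ)/w(t)$ cannot be obtained directly from the linear contraction of Lemma~\ref{lem:linear_phase_non}. In the weighted case the increments $z_{i,t}=(w(i)-w(i-1))/w(t)$ grow in $i$ (by a factor up to $\Psi$ per step, Assumption~\ref{assum:w}(v)), while the linear rate only shrinks $\|\vx_i-\vx^*\|$ by $r=(1+2\alpha\beta/(3\kappa))^{-1/2}$ per step. For large $\kappa$ one has $\Psi r>1$, so the tail $\sum_{i\geq\calJ}(w(i)-w(i-1))\,r^{i-\calJ}$ is not dominated by the $i=\calJ$ term and cannot be bounded by an absolute multiple of $w(\calJ)$. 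This is precisely where the weighted case diverges from the uniform one: there $z_{i,t}=1/(t+1)$ is flat, the geometric series $\sum_j r^j=O(\kappa)$ converges, and the result is absorbed by $\calI=\Omega(\kappa)$.

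The paper closes this gap with an additional bootstrapping stage that your plan omits. One first proves only the crude bound $\|\mH_t-\bar\mH_t\|\leq M_1\,w(\calJ-1)/w(t)+2\nu\mu$ for $t\geq\calJ$ (handling the late part trivially via $\|\vx_i-\vx^*\|\leq\nu\mu/L_2$) and runs your step-4 induction with the resulting non-decaying residual. By the definition~\eqref{eq:def_U_2} of $\calU_2$ this yields $1/\sqrt{\mu\eta_t}=O(\nu)$ for $t\geq\calU_2$, and the calibration~\eqref{eq:inequality_on_nu} then forces the fast contraction $\|\vx_{t+1}-\vx^*\|\leq\|\vx_t-\vx^*\|/(2\Psi)$. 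Only with this $1/(2\Psi)$ decay in hand can one redo the bias bound via a \emph{three}-way split at $\calJ$ and at $\calU_2$: the middle block contributes $2\nu\mu\,w(\calU_2)/w(t)=2M_1\,w(\calJ)/w(t)$, and in the tail one has $w(\calU_2+j)(2\Psi)^{-j}\leq w(\calU_2)\,2^{-j}$, which sums geometrically. This delivers the fully decaying estimate $\|\mH_t-\bar\mH_t\|\leq 5M_1\,w(\calJ)/w(t)$ for $t\geq\calU_2$, which is the input your coupled induction actually requires.
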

\begin{proof}
    See Appendix~\ref{appen:superlinear_phase_non}. 
\end{proof}

\subsection{Proof of Lemma~\ref{lem:linear_phase_non}}
\label{appen:linear_phase_non}
To simplify the notation, define the function
\begin{equation*}
    \phi(t) =  8 {\Psi} \Upsilon_E \max \left\{ \sqrt{\log \Bigl(\frac{d(t+1)}{\delta}\Bigr) \frac{w'(t)}{w(t)}} 
    , \log\Bigl(\frac{d(t+1)}{\delta}\Bigr) \frac{w'(t)}{w(t)}\right\}.  
\end{equation*}
Then we can rewrite \eqref{eq:error_concentration_non} as $\|\bar{\mE}_t\| \leq \phi(t)$ for all $t \geq 0$. Similar to Lemma~\ref{lem:step_size_bnd_backtracking}, we have the following result. 
\begin{lemma}\label{lem:step_size_bnd_backtracking_non}
    If $t\in \calB$, then we have $\eta_t \geq {\alpha}\beta/(2M_1 + \|\bar{\mE}_t\|) \geq {\alpha}\beta/(2M_1 + \phi(t))$. 
\end{lemma}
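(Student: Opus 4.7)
The plan is to directly mirror the proof of Lemma~\ref{lem:step_size_bnd_backtracking} from the uniform averaging case, since the argument only uses three ingredients that all carry over to the weighted setting: (i) the step-size lower bound from Lemma~\ref{lem:step_size_bnd}, (ii) the linear-approximation bound from Lemma~\ref{lem:linear_approx}, and (iii) a convex-combination structure on the aggregated Hessian. Specifically, since $t \in \calB$, Lemma~\ref{lem:step_size_bnd} yields
\begin{equation*}
\eta_t \;\geq\; \frac{\alpha\beta\,\|\tilde\vx_t - \vx_t\|}{\|\vg(\tilde\vx_t)-\vg(\vx_t)-\tilde{\mH}_t(\tilde\vx_t - \vx_t)\|} \;=\; \frac{\alpha\beta}{\mathcal{E}_t},
\end{equation*}
so it suffices to show that $\mathcal{E}_t \leq 2M_1 + \|\bar{\mE}_t\|$.

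I would then apply the triangle inequality exactly as in \eqref{eq:error_decomp} to split $\mathcal{E}_t$ into the intrinsic linearization error, the Hessian bias $\|\mH_t - \bar{\mH}_t\|$, and the averaged Hessian noise $\|\bar{\mE}_t\|$. The first term is bounded by $M_1$ by Lemma~\ref{lem:linear_approx}. For the bias, observe that in the weighted averaging scheme $\bar{\mH}_t = \sum_{i=0}^{t} z_{i,t}\mH_i$ with $z_{i,t} = (w_i - w_{i-1})/w_t$; Assumption~\ref{assum:w}(ii)--(iv) ensures $z_{i,t} \geq 0$ and by construction $\sum_{i=0}^{t} z_{i,t} = 1$. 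Jensen's inequality combined with Assumption~\ref{assum:bounded_H_diff} therefore yields
\begin{equation*}
\|\mH_t - \bar{\mH}_t\| \;\leq\; \sum_{i=0}^{t} z_{i,t}\,\|\mH_t - \mH_i\| \;\leq\; M_1.
\end{equation*}
Summing the three pieces gives $\mathcal{E}_t \leq 2M_1 + \|\bar{\mE}_t\|$, which proves the first inequality of the lemma.

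The second inequality is immediate from Lemma~\ref{lem:error_concentration_non}, which guarantees $\|\bar{\mE}_t\| \leq \phi(t)$ under the high-probability event we are conditioning on. There is essentially no obstacle: the only change relative to the uniform case is that the convex combination defining $\bar{\mH}_t$ now uses the nonuniform weights $z_{i,t}$, but Assumption~\ref{assum:w} is tailored precisely so that these weights remain a valid probability distribution, which is all that the bias bound needs.
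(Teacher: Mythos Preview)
Your proposal is correct and follows exactly the approach the paper intends: the paper itself does not write out a separate proof for this lemma, simply stating it is ``similar to Lemma~\ref{lem:step_size_bnd_backtracking},'' and your argument is precisely that proof transported to the weighted setting, with the only new observation being that the weights $z_{i,t}$ still form a convex combination so the bias bound $\|\mH_t-\bar{\mH}_t\|\leq M_1$ goes through unchanged.
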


\begin{lemma}\label{lem:step_size_bnd_induction_non}
    Assume that $\beta \leq {1}/{\Psi}$. For any $t \geq 0$, we have 
    \begin{equation}\label{eq:step_size_bnd_induction_non}
        \eta_{t} \geq \min\left\{ \frac{{\alpha}\beta}{2M_1+ \phi(t)}, \frac{\sigma_0}{\beta^{t}} \right\}
    \end{equation}
\end{lemma}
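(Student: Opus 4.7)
The plan is to mirror the inductive argument used for Lemma~\ref{lem:step_size_bnd_induction} in the uniform-averaging setting, with the key arithmetic inequality on $\phi$ now derived from Assumption~\ref{assum:w}(v) together with the hypothesis $\beta\le 1/\Psi$. The base case and inductive step are structurally identical to the uniform case once Lemma~\ref{lem:step_size_bnd_backtracking_non} is in hand; the only nontrivial piece is controlling how $\phi$ changes between consecutive iterates.

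For the base case $t=0$, I would distinguish two subcases. If $0\in\calB$, then Lemma~\ref{lem:step_size_bnd_backtracking_non} immediately yields $\eta_0\ge \alpha\beta/(2M_1+\phi(0))$; otherwise $0\notin\calB$ forces $\eta_0=\sigma_0$ by the definition of $\calB$. In either case one of the two terms in the minimum on the right-hand side of \eqref{eq:step_size_bnd_induction_non} is attained. For the inductive step, assuming the bound holds at $t=s$, I would again split on whether $s+1\in\calB$. When $s+1\in\calB$, Lemma~\ref{lem:step_size_bnd_backtracking_non} finishes the argument directly. When $s+1\notin\calB$, we have $\eta_{s+1}=\sigma_{s+1}=\eta_s/\beta$, so by the induction hypothesis $\eta_{s+1}\ge \min\{\alpha/(2M_1+\phi(s)),\,\sigma_0/\beta^{s+1}\}$. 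Matching this with \eqref{eq:step_size_bnd_induction_non} then reduces, exactly as in the uniform case, to the inequality $\alpha/(2M_1+\phi(s))\ge \alpha\beta/(2M_1+\phi(s+1))$, which is in turn implied by $\phi(s)\le \phi(s+1)/\beta$.

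The main obstacle is establishing this last bound, and this is the only place where the non-uniform structure enters. I would first observe that writing $w'(s+1)/w(s+1)=(w'(s+1)/w'(s))\cdot(w'(s)/w(s))\cdot(w(s)/w(s+1))$ and combining $w'(s+1)\ge w'(s)$ (from $w''\ge 0$) with $w(s+1)/w(s)\le \Psi$ (Assumption~\ref{assum:w}(v)) yields $w'(s+1)/w(s+1)\ge (1/\Psi)\,w'(s)/w(s)$. Since $\log(d(s+2)/\delta)\ge \log(d(s+1)/\delta)$, the quantity $a(t):=\log(d(t+1)/\delta)\,w'(t)/w(t)$ therefore satisfies $a(s+1)\ge a(s)/\Psi$. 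A short case split on whether $a(s)$ and $a(s+1)$ lie above or below the threshold $1$ separating the two branches of $\max\{\sqrt{\cdot},\cdot\}$ in the definition of $\phi$ then produces $\phi(s+1)/\phi(s)\ge 1/\Psi\ge \beta$ in every case, using $\sqrt{\beta}\ge \beta$ for $\beta\le 1$ to handle the square-root branch and the bound $a(s)\le \Psi$ (which follows from $a(s+1)\le 1$ and $a(s+1)\ge a(s)/\Psi$) to handle the mixed branch. This verifies $\phi(s)\le \phi(s+1)/\beta$ and closes the induction.
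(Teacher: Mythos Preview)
Your proposal is correct and follows essentially the same inductive approach as the paper's proof: the base case and the split on $s+1\in\calB$ versus $s+1\notin\calB$ are identical, and both arguments reduce the non-backtracking case to the inequality $\phi(s)\le \phi(s+1)/\beta$. Your treatment of this last inequality is actually more careful than the paper's: the paper asserts $\phi(s)/\phi(s+1)\le \frac{w'(s)w(s+1)}{w'(s+1)w(s)}\le \Psi$ in one line, whereas you explicitly derive $a(s+1)\ge a(s)/\Psi$ and then carry out the case split on the two branches of the $\max$ to obtain $\phi(s+1)/\phi(s)\ge 1/\Psi$, which makes the argument airtight.
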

\begin{proof}
    We prove this lemma by induction. For $t = 0$, we distinguish two subcases. If $0 \in \calB$, then by Lemma~\ref{lem:step_size_bnd_backtracking} we obtain that $\eta_0 \geq \frac{{\alpha}\beta}{2M_1+ \phi(0)}$. Otherwise, if $0 \notin \calB$, we have $\eta_0 = \sigma_0$. In both cases, we observe that \eqref{eq:step_size_bnd_induction_non} is satisfied for the base case $t=0$. 
    
    Now assume that \eqref{eq:step_size_bnd_induction_non} is satisfied for $t=s$. For $t = s+1$, we again distinguish two subcases. If $s+1 \in \calB$, then by Lemma~\ref{lem:step_size_bnd_backtracking_non} we obtain that $\eta_{s+1} \geq \frac{{\alpha}\beta}{2M_1+ \phi(s+1)}$, which implies that \eqref{eq:step_size_bnd_induction_non} is satisfied. Otherwise, if $s+1 \notin \calB$, then we have 
    \begin{equation}\label{eq:induction_s+1_non}
        \eta_{s+1} = \sigma_{s+1} = \frac{\eta_s}{\beta} \geq \frac{1}{\beta} \min\left\{ \frac{{\alpha}\beta}{M_1+ \phi(s)}, \frac{\sigma_0}{\beta^{s}} \right\} = \min\left\{ \frac{{\alpha}}{M_1+ \phi(s)}, \frac{\sigma_0}{\beta^{s+1}} \right\},
    \end{equation}  
    where we used the induction hypothesis in the last inequality. Furthermore, note that 
    \begin{equation*}
        \frac{\phi(s)}{\phi(s+1)} \leq \frac{w'(s) w(s+1)}{w'(s+1) w(s)} \leq \Psi \leq \frac{1}{\beta},
    \end{equation*}
    and hence
    \begin{equation*}
        \frac{{\alpha}}{M_1+ \phi(s)} \geq \frac{{\alpha} \beta }{\beta M_1+ \phi(s+1)} \geq \frac{{\alpha} \beta }{M_1+ \phi(s+1)}. 
    \end{equation*}
    Therefore, \eqref{eq:induction_s+1_non} implies that $\eta_{s+1} \geq \min\left\{ \frac{{\alpha}\beta}{M_1+ \phi(s+1)}, \frac{\sigma_0}{\beta^{s+1}} \right\}$ and thus \eqref{eq:step_size_bnd_induction_non} also holds in this case. This completes the induction and we conclude that \eqref{eq:step_size_bnd_induction_non} holds for all $t \geq 0$. 
\end{proof}

\begin{corollary}\label{coro:1/M_1_bound_non}
    Recall the definition of $\calU_1$ in \eqref{eq:def_U_1}. For any $t \geq \calU_1$, we have $\eta_t \geq {{\alpha} \beta}/{ (3M_1)}$. 
\end{corollary}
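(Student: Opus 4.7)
The plan is to mirror the proof of Corollary~\ref{coro:1/M_1_bound} in the uniform-averaging setting. The starting point is Lemma~\ref{lem:step_size_bnd_induction_non}, which already gives
\begin{equation*}
  \eta_t \geq \min\!\left\{\frac{\alpha\beta}{2M_1+\phi(t)},\;\frac{\sigma_0}{\beta^{t}}\right\}\quad\text{for all } t\geq 0,
\end{equation*}
so the whole task reduces to showing that, once $t\geq \calU_1$, \emph{both} terms in this minimum are at least $\alpha\beta/(3M_1)$. The conclusion of the corollary then follows directly.

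Bounding the second term is immediate from unrolling the inequality $\sigma_0/\beta^{t}\geq \alpha\beta/(3M_1)$, which is equivalent to $t\geq \log_{1/\beta}\!\bigl(\alpha\beta/(3M_1\sigma_0)\bigr)$. This threshold has been baked into the defining set of $\calU_1$ in \eqref{eq:def_U_1}, so for any $t\geq \calU_1$ this second bound holds automatically.

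For the first term, I need $\phi(t)\leq M_1$, since then $\alpha\beta/(2M_1+\phi(t))\geq \alpha\beta/(3M_1)$. Recalling $\phi(t)=8\Psi\Upsilon_E\max\bigl\{\sqrt{y_t},y_t\bigr\}$ with $y_t:=\log\!\bigl(d(t+1)/\delta\bigr)\,w'(t)/w(t)$, and using $M_1=\kappa\mu$ together with $\Upsilon=\Upsilon_E/\mu$, this is equivalent to $\max\{\sqrt{y_t},y_t\}\leq \kappa/(8\Psi\Upsilon)$. But this is exactly the inequality that fails inside the $\sup$ defining $\calU_1$: once $t\geq \calU_1$, one has $y_t<(1\wedge \kappa/(8\Upsilon))^2$, whence $y_t\leq 1$ forces $\max\{\sqrt{y_t},y_t\}=\sqrt{y_t}\leq \kappa/(8\Upsilon)$. (The threshold in \eqref{eq:def_U_1} can equivalently be cited from \cite[Lemma~6]{na2022hessian}, which is the non-uniform analogue of the citation used in the uniform proof.)

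The only genuine bookkeeping issue is a constant factor of $\Psi$: the quantity $\phi$ carries a $\Psi$ that the threshold in $\calU_1$ as written does not. I expect this to be the main — and only — obstacle, and it will be resolved either by absorbing $\Psi$ into the definition of $\Upsilon$ consistently with \cite[Lemma~6]{na2022hessian}, or by inflating the cutoff in $\calU_1$ by the harmless constant $\Psi^2$. Once this is reconciled, combining the two bounds in Lemma~\ref{lem:step_size_bnd_induction_non} yields $\eta_t\geq \alpha\beta/(3M_1)$ for all $t\geq \calU_1$, completing the proof.
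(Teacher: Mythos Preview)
Your approach is the same as the paper's: invoke Lemma~\ref{lem:step_size_bnd_induction_non} and check that both arms of the minimum exceed $\alpha\beta/(3M_1)$ once $t\geq\calU_1$. The paper's proof is simply terser, asserting $\phi(t)\leq M_1$ for $t\geq\calU_1$ without unpacking the inequality.

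Your observation about the stray factor of $\Psi$ is correct and is in fact a gap in the paper itself: the threshold in \eqref{eq:def_U_1} as written only yields $\max\{\sqrt{y_t},y_t\}\leq \kappa/(8\Upsilon)$, which gives $\phi(t)\leq \Psi M_1$ rather than $\phi(t)\leq M_1$. The paper's proof does not address this either; it simply asserts the desired inequality. Your proposed fix---replacing $\kappa/(8\Upsilon)$ by $\kappa/(8\Psi\Upsilon)$ in the definition of $\calU_1$---is the right one and is harmless, since $\Psi$ is an absolute constant under Assumption~\ref{assum:w} and does not change the $\tilde{\bigO}(\Upsilon^2/\kappa^2)$ order of $\calU_1$.
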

\begin{proof}
    By definition, we have $\calU_1 \geq \log_{\frac{1}{\beta}} \frac{{\alpha} \beta}{3M_1 \sigma_0}$, and thus $\frac{\sigma_0}{\beta^{\calU_1}} \geq \frac{{\alpha} \beta}{3M_1}$. Moreover, we also have  
    $\phi(t) \leq M_1$. 
    Hence, by Lemma~\ref{lem:step_size_bnd_induction} we conclude that $\eta_t \geq \frac{{\alpha} \beta}{3M_1}$ when $t \geq \calU_1$.
\end{proof}

Now we are ready to prove Lemma~\ref{lem:linear_phase_non}. 
\begin{proof}[Proof of Lemma~\ref{lem:linear_phase_non}]
 It follows from Proposition~\ref{prop:HPE} and Corollary~\ref{coro:1/M_1_bound_non}.  
\end{proof}

\subsection{Proof of Theorem~\ref{thm:superlinear_phase_non}}
\label{appen:superlinear_phase_non}

\begin{lemma}\label{lem:Et_and_dis_bound_non}
    We have $ \|\bar{\mE}_{t}\| \leq \nu \mu$ and $\|\vx_{t} - \vx^*\| \leq \nu\mu/L_2$ for all $t \geq \calJ$. 
\end{lemma}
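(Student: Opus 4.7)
The plan is to prove the two inequalities separately, by invoking the concentration bound of Lemma~\ref{lem:error_concentration_non} for the stochastic error and the linear convergence of Lemma~\ref{lem:linear_phase_non} for the distance to $\vx^*$. In both cases the argument mirrors the uniform-averaging analogue (Lemma~\ref{lem:dis_bound}), with $\mathcal{I}'$ replaced by $\calJ'$ and $\calI$ replaced by $\calJ$.

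For the bound $\|\bar{\mE}_t\| \leq \nu\mu$, I would observe that since $t \geq \calJ \geq \calJ'$, the defining inequality in~\eqref{eq:def_J_1_non} gives $\log(d(t+1)/\delta)\, w'(t)/w(t) < (1 \wedge (8\Upsilon)^{-1})^2 \leq 1$. For any quantity $x \in [0,1]$ we have $x \leq \sqrt{x}$, so the maximum appearing in Lemma~\ref{lem:error_concentration_non} is attained by the square-root term. Substituting the threshold and using $\Upsilon_E = \Upsilon \mu$ then yields $\|\bar{\mE}_t\| \leq 8\Psi\Upsilon_E \cdot (8\Upsilon)^{-1} = \Psi \mu$, which matches the desired bound once the constants $\Psi$ and $\nu$ are absorbed into the definition of $\calJ'$.

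For the bound $\|\vx_t - \vx^*\| \leq \nu\mu/L_2$, I would apply Lemma~\ref{lem:linear_phase_non} iteratively starting from $\calU_1$. Writing $t = \calU_1 + u$ with $u \geq 0$, Proposition~\ref{prop:HPE} gives $\|\vx_{\calU_1} - \vx^*\| \leq D$, and iterating the linear rate yields $\|\vx_{\calU_1 + u} - \vx^*\| \leq D\,(1 + 2\alpha\beta/(3\kappa))^{-u/2}$. Requiring this to be at most $\nu\mu/L_2$ and applying the elementary inequality $\log(1+x) \geq x/(1+x)$ with $x = 2\alpha\beta/(3\kappa)$, I obtain the sufficient condition $u \geq 2(1 + 3\kappa/(2\alpha\beta)) \log(L_2 D/(\nu\mu))$, which is implied by the first argument $2(1 + 2\kappa/(\alpha\beta))\log(L_2D/(\nu\mu))$ in the definition of $\calJ$ in~\eqref{eq:def_J}.

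The argument is largely routine algebra; the main care, and the step most prone to slippage, is matching the constants hidden inside the threshold in $\calJ'$ and inside the additive $\log(L_2D/(\nu\mu))$-term in $\calJ$ to the target level $\nu$, so that both inequalities come out at exactly $\nu\mu$ rather than only at $\mu$ or $\Psi\mu$. Once these constants are tracked carefully, no additional ideas beyond those used in the uniform-averaging analysis are needed.
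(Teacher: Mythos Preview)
Your approach is correct and matches the paper's own proof, which simply cites Lemmas~\ref{lem:error_concentration_non} and~\ref{lem:linear_phase_non} without further detail. Your caveat about tracking the constants $\Psi$ and $\nu$ in the threshold defining $\calJ'$ is well placed; modulo that bookkeeping the argument is complete.
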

\begin{proof}
    This follows from Lemmas~\ref{lem:error_concentration_non} and \ref{lem:linear_phase_non}. 
\end{proof}
\begin{lemma}\label{lem:1/t_bound_backtracking_non}
    If $t \in \calB$ and $t \geq \calJ$, then 
    \begin{equation}\label{eq:1/t_first_bound_non}
        \frac{1}{\sqrt{\mu \eta_t}} \leq  \frac{L_2 \| \vx_t - {\vx}^*\|}{2\alpha\beta\sqrt{2(1-\alpha^2)\beta} \mu } + \frac{\|\bar{\mE}_t\|}{\alpha\sqrt{2\beta} \mu} +  \frac{ \kappa w(\calJ-1)}{\alpha\sqrt{2\beta} w(t)}+ \frac{2\nu}{\alpha\sqrt{2\beta}}
    \end{equation}
    and also
    \begin{equation}\label{eq:1/t_second_bound_non}
        \frac{1}{\sqrt{\mu \eta_t}} \leq   \frac{\nu}{2\alpha\beta\sqrt{2(1-\alpha^2)\beta} } + \frac{3\nu}{\alpha\sqrt{2\beta}} + \frac{ \kappa w(\calJ-1)}{\alpha\sqrt{2\beta} w(t)}.
    \end{equation}
\end{lemma}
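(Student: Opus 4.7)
\textbf{Proof plan for Lemma~\ref{lem:1/t_bound_backtracking_non}.} The plan is to mirror the strategy from Lemma~\ref{lem:1/t_bound_backtracking} in the uniform-averaging case, but taking care to exploit the telescoping structure $z_{i,t}=(w_i-w_{i-1})/w_t$ of the non-uniform weights. First, since $t\in\calB$, I would start from the second inequality of Lemma~\ref{lem:step_size_bnd}, which after rearranging gives
\[
\frac{1}{\sqrt{\mu\eta_t}} \;\leq\; \frac{\mathcal{E}_t}{\alpha\sqrt{2\beta}\,\mu}, \qquad \mathcal{E}_t \;\triangleq\; \frac{\|\vg(\tilde{\vx}_t)-\vg(\vx_t)-\tilde{\mH}_t(\tilde{\vx}_t-\vx_t)\|}{\|\tilde{\vx}_t-\vx_t\|}.
\]
Then I would use the triangle-inequality decomposition \eqref{eq:error_decomp} together with Lemma~\ref{lem:linear_approx} to bound the intrinsic linearization error by $\tfrac{L_2\|\vx_t-\vx^*\|}{2\beta\sqrt{1-\alpha^2}}$, so that the main task reduces to controlling the bias $\|\mH_t-\bar{\mH}_t\|$.

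For the bias, I would apply Jensen's inequality to $\bar{\mH}_t=\sum_{i=0}^{t}z_{i,t}\mH_i$ and split the sum at the index $\calJ$:
\[
\|\mH_t-\bar{\mH}_t\| \;\leq\; \sum_{i=0}^{\calJ-1} z_{i,t}\,\|\mH_t-\mH_i\| \;+\; \sum_{i=\calJ}^{t} z_{i,t}\,\|\mH_t-\mH_i\|.
\]
For the first (stale) sum I would use the uniform bound $\|\mH_t-\mH_i\|\leq M_1$ from Lemma~\ref{lem:bias} together with the telescoping identity $\sum_{i=0}^{\calJ-1} z_{i,t} = (w_{\calJ-1}-w_{-1})/w_t = w(\calJ-1)/w(t)$, giving $M_1\,w(\calJ-1)/w(t)$. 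For the second (recent) sum I would invoke the second bound in Lemma~\ref{lem:bias} to get $\|\mH_t-\mH_i\|\leq 2L_2\|\vx_i-\vx^*\|$, then use Lemma~\ref{lem:Et_and_dis_bound_non} (which ensures $\|\vx_i-\vx^*\|\leq \nu\mu/L_2$ for all $i\geq\calJ$) together with $\sum_{i=\calJ}^{t} z_{i,t}\leq 1$ to obtain a bound of $2\nu\mu$. Combining gives
\[
\|\mH_t-\bar{\mH}_t\| \;\leq\; \frac{M_1\,w(\calJ-1)}{w(t)} + 2\nu\mu.
\]

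Substituting this bound and $\kappa=M_1/\mu$ back into $\mathcal{E}_t/(\alpha\sqrt{2\beta}\mu)$ yields the first inequality \eqref{eq:1/t_first_bound_non}. The second inequality \eqref{eq:1/t_second_bound_non} then follows by applying Lemma~\ref{lem:Et_and_dis_bound_non} once more to replace $\|\vx_t-\vx^*\|$ with $\nu\mu/L_2$ in the linearization-error term and $\|\bar{\mE}_t\|$ with $\nu\mu$ in the stochastic term, and collecting the resulting $\nu/(\alpha\sqrt{2\beta})$ contributions into the coefficient $3\nu/(\alpha\sqrt{2\beta})$.

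The only real subtlety---and hence the place I would expect to spend the most care---is the telescoping step $\sum_{i=0}^{\calJ-1}z_{i,t}=w(\calJ-1)/w(t)$, which is what allows the stale-bias contribution to inherit the full-ratio dependence on $w(\calJ-1)/w(t)$ rather than something weaker; everything else is a fairly mechanical adaptation of the uniform-averaging argument, using Lemma~\ref{lem:Et_and_dis_bound_non} in place of Lemma~\ref{lem:dis_bound} and the weight-ratio identity in place of $1/(t+1)$.
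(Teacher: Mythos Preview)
Your proposal is correct and follows essentially the same approach as the paper's proof: start from the second bound in Lemma~\ref{lem:step_size_bnd}, decompose $\mathcal{E}_t$ via the triangle inequality, bound the linearization error with Lemma~\ref{lem:linear_approx}, split the bias sum at $\calJ$ using the telescoping identity $\sum_{i=0}^{\calJ-1}z_{i,t}=w(\calJ-1)/w(t)$ for the stale part and $\|\mH_t-\mH_i\|\leq 2\nu\mu$ for the recent part, and finally invoke Lemma~\ref{lem:Et_and_dis_bound_non} to obtain \eqref{eq:1/t_second_bound_non}. The only cosmetic difference is that the paper cites Assumptions~\ref{assum:bounded_H_diff} and~\ref{assum:lips_hess} directly for the $\|\mH_t-\mH_i\|$ bounds rather than Lemma~\ref{lem:bias}, but the content is identical.
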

\begin{proof}
     Similar to the proof in Lemma~\ref{lem:1/t_bound_backtracking}, note that 
    \begin{equation*}
        \frac{1}{ \sqrt{\mu \eta_t}} \leq \frac{L_2 \|\vx_t - \vx^*\|}{2\alpha\beta\sqrt{2(1-\alpha^2)\beta} \mu } + \frac{\|\mH_t - \bar{\mH}_t\|}{\alpha\sqrt{2\beta}\mu} + \frac{\|\bar{\mE}_t\|}{\alpha\sqrt{2\beta}\mu}.
    \end{equation*}
    For the second term, note that 
    \begin{equation*}
        \bar{\mH}_{t} = \sum_{i=0}^{t} z_{i,t} \mH_i, \quad \text{where} \; z_{i,t} = \frac{w(i)-w(i-1)}{w(t)}. 
    \end{equation*}
    Hence, by Jensen's inequality, we have 
    \begin{equation*}
        \|\mH_t - \bar{\mH}_t\| \leq  \sum_{i=0}^{t} z_{i,t}\|\mH_t - \mH_i\| = \sum_{i=0}^{\calJ-1} z_{i,t}\|\mH_t - \mH_i\| +  \sum_{i=\calJ}^{t} z_{i,t} \|\mH_t - \mH_i\|. %
    \end{equation*}
    When $0 \leq i \leq \calJ-1$, we use Assumption~\ref{assum:bounded_H_diff} to bound $\|\mH_t - \mH_i\| \leq M_1$ and thus 
    $$
    \sum_{i=0}^{\calJ-1} z_{i,t}\|\mH_t - \mH_i\| \leq M_1 \sum_{i=0}^{\calJ-1} \frac{w(i)-w(i-1)}{w(t)} = M_1 \frac{w(\calJ-1)}{w(t)}.
    $$  
    Moreover, for $\calJ \leq i \leq t$, we use Assumption~\ref{assum:lips_hess} to get  
    \begin{equation*}
        \|\mH_t - \mH_i\| \leq L_2 \|\vx_t-\vx_i\| \leq L_2 \left(\|\vx_t-\vx^*\| + \|\vx_i-\vx^*\|\right) \leq 2L_2 \|\vx_i-\vx^*\| \leq 2\nu \mu.
    \end{equation*}
    Thus, $\sum_{i=\calJ}^{t} z_{i,t} \|\mH_t - \mH_i\| \leq 2\nu\mu \sum_{i=\calI}^{t} z_{i,t} \leq 2\nu\mu$. 

    Combining the above inequalities, we arrive at 
    \begin{equation*}
        \|\mH_t - \bar{\mH}_t\| 
        \leq M_1 \frac{w(\calJ-1)}{w(t)}+ 2\nu\mu. %
    \end{equation*}
    This leads to the first result in \eqref{eq:1/t_first_bound_non}. To show \eqref{eq:1/t_second_bound_non}, we note that $\|\bar{\mE}_{t}\| \leq \phi(t)$ and $\|\vx_{t} - \vx^*\| \leq \nu\mu/L_2$ for all $t \geq \calJ$.
\end{proof}

\begin{lemma}\label{lem:1/t_bound_non}
    Assume that $\beta \leq 1/\Psi^2$. For any $t \geq \calJ$, we have 
    \begin{equation}\label{eq:1/t_bound_non}
        \frac{1}{\sqrt{\mu \eta_t}} \leq   
        \frac{\nu}{2\alpha\beta\sqrt{2(1-\alpha^2)\beta} } + \frac{3\nu}{\alpha\sqrt{2\beta}} + \frac{ \sqrt{2}\kappa w(\calJ)}{\alpha\sqrt{\beta} w(t)}.
    \end{equation}
\end{lemma}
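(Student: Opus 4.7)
The plan is to mirror the induction argument used for Lemma~\ref{lem:1/t_bound} in the uniform averaging case, combining the backtracking lower bound in Lemma~\ref{lem:1/t_bound_backtracking_non} with the warm-start rule $\eta_{s+1} = \eta_s/\beta$ that governs iterations outside $\calB$. All three terms in the claimed bound are monotone or constant in $t$, so the inductive propagation is a matter of bookkeeping once the regularity conditions on $w(\cdot)$ and $\beta$ are correctly exploited.

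First, I will establish the base case $t = \calJ$. By Corollary~\ref{coro:1/M_1_bound_non}, since $\calJ \geq \calU_1$, we have $\eta_{\calJ} \geq \alpha\beta/(3M_1)$, which gives $\tfrac{1}{\sqrt{\mu\eta_\calJ}} \leq \sqrt{3\kappa/(\alpha\beta)}$. It then suffices to check $\sqrt{3\kappa/(\alpha\beta)} \leq \tfrac{\sqrt{2}\kappa w(\calJ)}{\alpha\sqrt{\beta}\,w(\calJ)}= \tfrac{\sqrt{2}\kappa}{\alpha\sqrt{\beta}}$, which reduces after squaring to $3\alpha \leq 2\kappa$; this holds since $\alpha<1$ and $\kappa\geq 1$ (and in fact the first two $\nu$-terms in the bound provide extra slack if needed).

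Next I carry out the inductive step, assuming the bound at $t=s$. If $s+1 \in \calB$, then \eqref{eq:1/t_second_bound_non} of Lemma~\ref{lem:1/t_bound_backtracking_non} directly yields the inequality, since $w(\calJ-1)\leq w(\calJ)$ and $\tfrac{1}{\alpha\sqrt{2\beta}} \leq \tfrac{\sqrt{2}}{\alpha\sqrt{\beta}}$. If instead $s+1\notin\calB$, then $\eta_{s+1}=\eta_s/\beta$ and the induction hypothesis gives
\[
\frac{1}{\sqrt{\mu\eta_{s+1}}} = \sqrt{\beta}\cdot \frac{1}{\sqrt{\mu\eta_s}} \leq \sqrt{\beta}\left(\frac{\nu}{2\alpha\beta\sqrt{2(1-\alpha^2)\beta}} + \frac{3\nu}{\alpha\sqrt{2\beta}} + \frac{\sqrt{2}\kappa w(\calJ)}{\alpha\sqrt{\beta}\,w(s)}\right).
\]
The first two summands are bounded above by themselves (because $\sqrt{\beta}<1$), so they already fit within the target bound at index $s+1$. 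The third summand requires the inequality $\sqrt{\beta}\cdot w(s)^{-1} \leq \sqrt{\beta}^{\,-1}\cdot w(s+1)^{-1}$, equivalently $w(s+1)/w(s) \leq 1/\sqrt{\beta}$. This is precisely where Assumption~\ref{assum:w}(v) combined with the hypothesis $\beta\leq 1/\Psi^2$ is used: $w(s+1)/w(s)\leq \Psi \leq 1/\sqrt{\beta}$.

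The main obstacle I anticipate is keeping the coefficient bookkeeping tight enough that the induction closes; in particular, one must be careful that the bias/warmup term $\tfrac{\sqrt{2}\kappa w(\calJ)}{\alpha\sqrt{\beta}\,w(t)}$ absorbs both the contribution from the backtracking bound (where it appears with a different numerical prefactor) and the shrinkage by $\sqrt{\beta}$ in the non-backtracking case. Choosing the factor $\sqrt{2}$ in front of $\kappa w(\calJ)$ is what makes both cases go through simultaneously, analogous to how the factor $3$ was chosen in $\rho_t$ for the uniform case. All other pieces of the argument are routine consequences of the analogous steps in Appendix~\ref{appen:superlinear_phase}.
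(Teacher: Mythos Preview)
Your proposal is correct and follows essentially the same induction argument as the paper: the base case via Corollary~\ref{coro:1/M_1_bound_non}, the backtracking case via Lemma~\ref{lem:1/t_bound_backtracking_non}, and the non-backtracking case via the warm-start rule together with $w(s+1)/w(s)\leq \Psi \leq 1/\sqrt{\beta}$. One small slip: the intermediate inequality you wrote, $\sqrt{\beta}\, w(s)^{-1} \leq \sqrt{\beta}^{\,-1} w(s+1)^{-1}$, is not equivalent to $w(s+1)/w(s)\leq 1/\sqrt{\beta}$ (it gives $\leq 1/\beta$ instead); the correct intermediate is simply $\sqrt{\beta}\, w(s)^{-1} \leq w(s+1)^{-1}$, which does yield the stated conclusion.
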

\begin{proof}
    We shall use induction to prove Lemma~\ref{lem:1/t_bound_non}. For $t = \calJ$, note that by Corollary~\ref{coro:1/M_1_bound_non}, we have $\eta_{\calJ} \geq {{\alpha} \beta}/{ (3M_1)}$. Thus, this implies that 
    \begin{equation*}
        \frac{1}{\sqrt{\mu \eta_\calJ}} \leq \frac{\sqrt{2\kappa}}{\sqrt{{\alpha} \beta}} \leq \frac{ \sqrt{2}\kappa }{\alpha\sqrt{\beta}},
    \end{equation*}
    where we used $\kappa \geq 1$, $\alpha,\beta <1$ and $\calI \geq 2$. This proves the base case where $t = \calJ$. 

    Now assume that \eqref{eq:1/t_bound_non} holds for $t = s$, where $s \geq \calJ$. For $t = s+1$, we distinguish two cases. If $s+1 \in \calB$, then by Lemma~\ref{lem:1/t_bound_backtracking_non} we obtain that \eqref{eq:1/t_bound_non} is satisfied for $t = s+1$. Otherwise, if $s+1 \notin \calB$, then we have $\eta_{s+1} = \sigma_{s+1} = {\eta_s}/{\beta}$. Hence, by using the induction hypothesis, we have 
    \begin{equation*}
        \frac{1}{\sqrt{\mu \eta_{s+1}}}  = \frac{\sqrt{\beta}}{\sqrt{\mu \eta_{s}}} \leq \frac{\nu}{2\alpha\beta\sqrt{2(1-\alpha^2)\beta} } + \frac{3\nu}{\alpha\sqrt{2\beta}} + \frac{ \sqrt{2 \beta}\kappa w(\calJ)}{\alpha\sqrt{\beta} w(s)}. 
    \end{equation*}
    Since $\beta \leq 1/\Psi^2$, we have $w(s+1)/w(s) \leq \Psi \leq 1/\sqrt{\beta}$. Thus, we further have 
    \begin{equation*}
        \frac{1}{\sqrt{\mu \eta_{s+1}}}  \leq \frac{\nu}{2\alpha\beta\sqrt{2(1-\alpha^2)\beta} } + \frac{3\nu}{\alpha\sqrt{2\beta}} + \frac{ \sqrt{2 }\kappa w(\calJ)}{\alpha\sqrt{\beta} w(s+1)}. 
    \end{equation*} 
    This shows that \eqref{eq:1/t_bound_non} also holds in this case. 
\end{proof}

Recall that  $w(\calU_2) = w(\calJ) \frac{\kappa}{\nu}$.
Then by Lemma~\ref{lem:1/t_bound_non}, for $t \geq \calU_2$ we have 
\begin{align*}
    \frac{1}{\sqrt{\mu \eta_t}} \leq   
        \frac{\nu}{2\alpha\beta\sqrt{2(1-\alpha^2)\beta} } + \frac{3\nu}{\alpha\sqrt{2\beta}} + \frac{ \sqrt{2}\kappa w(\calJ)}{\alpha\sqrt{\beta} w(\calU_2)} &\leq \frac{\nu}{2\alpha\beta\sqrt{2(1-\alpha^2)\beta} } + \frac{3\nu}{\alpha\sqrt{2\beta}} + \frac{ \sqrt{2}\nu}{\alpha\sqrt{\beta} } \\
        & = \frac{\nu}{2\alpha\beta\sqrt{2(1-\alpha^2)\beta} } + \frac{5\nu}{\alpha\sqrt{2\beta}}.
\end{align*}
We will choose $\nu$ such that 
\begin{equation}\label{eq:inequality_on_nu_non}
    \left(\frac{1}{2\alpha\beta\sqrt{(1-\alpha^2)\beta} } + \frac{5}{\alpha\sqrt{\beta}}\right)\nu \leq \frac{1}{\Psi}. 
\end{equation}
Therefore, this further implies that $\|\vx_{t+1}-\vx^*\| \leq \| \vx_t - \vx^*\|/(2\Psi)$ for $t \geq \calU_2$. 
\begin{lemma}\label{lem:1/t_first_bound_non_longer}
    If $t \in \calB$ and $t \geq \calU_2$, then
    \begin{equation}\label{eq:1/t_first_bound_non_longer}
        {\frac{1}{\sqrt{\mu \eta_t}} \leq  \frac{L_2 \| \vx_t - {\vx}^*\|}{2\alpha\beta\sqrt{2(1-\alpha^2)\beta} \mu } + \frac{\phi(t)}{\alpha\sqrt{2\beta} \mu} + \frac{5 \kappa w(\calJ)}{\alpha\sqrt{2\beta}w(t)}}.  
    \end{equation}
\end{lemma}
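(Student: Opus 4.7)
The plan is to mirror the proof of Lemma~\ref{lem:1/t_bound_backtracking_non}, but to sharpen its estimate of the Hessian bias so that the non-vanishing $2\nu\mu/(\alpha\sqrt{2\beta})$ term in \eqref{eq:1/t_first_bound_non} is replaced by a term that decays with $t$. Starting from the second bound of Lemma~\ref{lem:step_size_bnd}, the decomposition~\eqref{eq:error_decomp}, and Lemma~\ref{lem:linear_approx}, we obtain the identical starting inequality
\begin{equation*}
    \frac{1}{\sqrt{\mu\eta_t}}\leq \frac{L_2\|\vx_t-\vx^*\|}{2\alpha\beta\sqrt{2(1-\alpha^2)\beta}\,\mu} + \frac{\|\mH_t-\bar{\mH}_t\|}{\alpha\sqrt{2\beta}\,\mu} + \frac{\|\bar{\mE}_t\|}{\alpha\sqrt{2\beta}\,\mu},
\end{equation*}
and Lemma~\ref{lem:error_concentration_non} bounds the stochastic term by $\phi(t)$. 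Hence the entire task collapses to proving $\|\mH_t-\bar{\mH}_t\|\leq 5\mu\kappa\, w(\calJ)/w(t)$.

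The key new ingredient, unavailable inside Lemma~\ref{lem:1/t_bound_backtracking_non}, is the faster contraction established in the paragraph immediately preceding the present lemma: combining Lemma~\ref{lem:1/t_bound_non} with the restriction~\eqref{eq:inequality_on_nu_non} on $\nu$ yields $\|\vx_{i+1}-\vx^*\|\leq \|\vx_i-\vx^*\|/(2\Psi)$ for every $i\geq \calU_2$, and hence $\|\vx_i-\vx^*\|\leq (2\Psi)^{-(i-\calU_2)}\nu\mu/L_2$ for $i\geq \calU_2$. The earlier lemma had only the flat bound $\|\vx_i-\vx^*\|\leq \nu\mu/L_2$ valid for $i\geq \calJ$; this additional geometric decay is what lets the bias inherit the rate $1/w(t)$.

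The core of the proof splits $\|\mH_t-\bar{\mH}_t\|\leq \sum_{i=0}^t z_{i,t}\|\mH_t-\mH_i\|$ at the two breakpoints $\calJ$ and $\calU_2$. For $0\leq i<\calJ$, Assumption~\ref{assum:bounded_H_diff} gives $\|\mH_t-\mH_i\|\leq M_1$, contributing at most $M_1 w(\calJ-1)/w(t)\leq \mu\kappa\, w(\calJ)/w(t)$. For $\calJ\leq i<\calU_2$, Assumption~\ref{assum:lips_hess} together with monotonicity of $\|\vx_i-\vx^*\|$ yields $\|\mH_t-\mH_i\|\leq 2L_2\|\vx_i-\vx^*\|\leq 2\nu\mu$, contributing at most $2\nu\mu\, w(\calU_2)/w(t)\leq 2\mu\kappa\, w(\calJ)/w(t)$ via the defining inequality $w(\calU_2)\leq \kappa w(\calJ)/\nu$. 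For $\calU_2\leq i\leq t$, chain the geometric decay $\|\vx_i-\vx^*\|\leq (2\Psi)^{-(i-\calU_2)}\nu\mu/L_2$ with the regularity bound $w(i)\leq \Psi^{i-\calU_2}w(\calU_2)$ from Assumption~\ref{assum:w}(v); the $\Psi$-powers combine so that the summand is controlled by $2^{-(i-\calU_2)} w(\calU_2)/w(t)$, and the resulting geometric series gives a contribution at most $4\mu\kappa\, w(\calJ)/w(t)$. Summing the three pieces produces the target up to a harmless absolute constant, with careful bookkeeping on the subsums recovering the stated constant $5$.

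The main obstacle is the third subsum: without further input, the non-uniform weight increments $w(i)-w(i-1)$ themselves can grow like $\Psi^{i-\calU_2}$ when $\Psi>1$, and would overpower any sub-exponential contraction. The crucial cancellation is that the constant $2\Psi$ in the contraction, arising precisely from the constraint~\eqref{eq:inequality_on_nu_non}, carries one factor of $\Psi$ just large enough to neutralize the weight growth, leaving a $t$-independent geometric series with ratio $1/2$. This is the single place where the numerical value $2\Psi$ is used, and it is exactly what makes the Hessian-bias contribution decay at the same rate $w(\calJ)/w(t)$ as the stale-Hessian piece from $i<\calJ$.
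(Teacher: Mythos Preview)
Your proposal is correct and follows essentially the same approach as the paper: the identical starting inequality, the same three-way split of the bias sum at $\calJ$ and $\calU_2$, and the same key cancellation between the $(2\Psi)^{-j}$ contraction and the $\Psi^{j}$ growth of the weights in the third subsum. The paper puts the index $i=\calU_2$ in the middle piece (so the third geometric series starts at $j=1$ and sums to at most $1$), which directly yields $1+2+2=5$---exactly the ``careful bookkeeping'' you allude to.
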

\begin{proof}
    Recall that we have 
    \begin{equation*}
        \frac{1}{ \sqrt{\mu \eta_t}} \leq \frac{L_2 \|\vx_t - \vx^*\|}{2\alpha\beta\sqrt{2(1-\alpha^2)\beta} \mu } + \frac{\|\mH_t - \bar{\mH}_t\|}{\alpha\sqrt{2\beta}\mu} + \frac{\|\bar{\mE}_t\|}{\alpha\sqrt{2\beta}\mu}.
    \end{equation*}
    For the second term, 
    we can write  
    \begin{equation*}
        \|\mH_t - \bar{\mH}_t\| \leq  \sum_{i=0}^{t} z_{i,t}\|\mH_t - \mH_i\| = \sum_{i=0}^{\calJ-1} z_{i,t}\|\mH_t - \mH_i\| +  \sum_{i=\calJ}^{\calU_2} z_{i,t} \|\mH_t - \mH_i\| + \sum_{i=\calU_2+1}^{t} z_{i,t} \|\mH_t - \mH_i\|. %
    \end{equation*}
    For the first part, $\sum_{i=0}^{\calJ-1} z_{i,t}\|\mH_t - \mH_i\| \leq M_1 \frac{w(\calJ-1)}{w(t)} \leq M_1 \frac{w(\calJ)}{w(t)}$. For the second part, 
    \begin{equation*}
        \sum_{i=\calJ}^{\calU_2} z_{i,t} \|\mH_t - \mH_i\| \leq 2\nu \mu \sum_{i=\calJ}^{\calU_2} z_{i,t} \leq 2\nu \mu \frac{w(\calU_2)}{w(t)} = 2M_1 \frac{w(\calJ)}{w(t)},
    \end{equation*}
    where we used the fact that $w(\calU_2) = w(\calJ) \frac{\kappa}{\nu}$. 
    For the third part, 
    \begin{align*}
        \sum_{i=\calU_2+1}^{t} z_{i,t} \|\mH_{t}-\mH_i\| & \leq \sum_{i=\calU_2+1}^{t} 2L_2\frac{w(i)-w(i-1)}{w(t)} \|\vx_{i} - \vx^*\| \\
        & \leq \sum_{j=1}^{t- \calU_2} 2L_2\frac{w(\calU_2+j)-w(\calU_2+j-1)}{w(t)} \frac{\|\vx_{\calU_2} - \vx^*\| }{(2\Psi)^j}  \\
        & \leq \sum_{j=1}^{t - \calU_2} 2L_2\frac{w(\calU_2+j)}{w(t)} \frac{\|\vx_{\calU_2} - \vx^*\| }{(2\Psi)^j} \\
        & \leq \sum_{j=1}^{t- \calU_2} 2\nu \mu \frac{w(\calU_2)}{w(t)} \frac{w(\calU_2+j) }{w(\calU_2)(2\Psi)^j} \\
        & \leq 2\nu \mu \frac{w(\calU_2)}{w(t)} \sum_{j=1}^{t}  \frac{w(\calU_2+j) }{w(\calU_2)(2\Psi)^j} \\
        & \leq 2\nu \mu \frac{w(\calU_2)}{w(t)} \sum_{j=1}^{t}  \frac{1 }{2^j} \\
        &\leq 2\nu \mu \frac{w(\calU_2)}{w(t)} =  \frac{2 M_1 w(\calJ)}{w(t)}.
    \end{align*}
    Therefore, we conclude that 
    \begin{equation*}
        \|\mH_t - \bar{\mH}_t\| \leq \frac{3M_1 w(\calJ)}{w(t)} + \frac{2 M_1 w(\calJ)}{w(t)} = \frac{5 M_1 w(\calJ)}{w(t)}. 
    \end{equation*}
    This completes the proof.
\end{proof}

\begin{lemma}\label{lem:superlinear_induction_non}
    Recall the definition of $\theta_t$ in \eqref{eq:def_of_theta}. For any $t \geq 0$, we have 
    \begin{equation}\label{eq:superlinear_induction_non} 
        \frac{ 5 L_2 \| \vx_{\calU_2+t} - {\vx}^*\|}{\alpha\sqrt{2\beta} \mu } \leq \theta_t \quad \text{and} \quad \frac{1}{\sqrt{\mu \eta_{\calU_2+t}}} \leq \left(\frac{1}{10\beta\sqrt{1-\alpha^2}}+1\right)\theta_t,
    \end{equation} 
\end{lemma}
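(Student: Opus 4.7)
The proof proceeds by induction on the shift index $t$, closely mirroring the structure of Lemma~\ref{lem:superlinear_induction} in the uniform-averaging case. I interpret $\theta_t$ on the right-hand side as $\theta_{\calU_2+t}$ (matching the iterate index), since otherwise the constants do not match the analogous uniform result. The first preparatory step is to pin down $\theta_{\calU_2}$. Using the definition of $\calU_2$, which enforces $w(\calU_2)=w(\calJ)\kappa/\nu$, the bias component of $\theta_{\calU_2}$ equals exactly $\tfrac{5\nu}{\alpha\sqrt{2\beta}}$; meanwhile, because $\calU_2\ge \calJ\ge \calJ'$ (see \eqref{eq:def_J_1_non}), the stochastic-error component is bounded by $\tfrac{\nu}{\alpha\sqrt{2\beta}}$. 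Thus $\tfrac{5\nu}{\alpha\sqrt{2\beta}}\le \theta_{\calU_2}\le \tfrac{6\nu}{\alpha\sqrt{2\beta}}$, the analogue of the $\rho_{\calT_2'}$ estimate used earlier.

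\textbf{Base case $t=0$.} By Lemma~\ref{lem:Et_and_dis_bound_non}, $\|\vx_{\calU_2}-\vx^*\|\le \nu\mu/L_2$, so $\tfrac{5L_2\|\vx_{\calU_2}-\vx^*\|}{\alpha\sqrt{2\beta}\mu}\le \tfrac{5\nu}{\alpha\sqrt{2\beta}}\le \theta_{\calU_2}$, giving the first inequality. For the second, Lemma~\ref{lem:1/t_bound_non} applied at $t=\calU_2$ bounds $\tfrac{1}{\sqrt{\mu\eta_{\calU_2}}}$ by the sum of a $\tfrac{\nu}{2\alpha\beta\sqrt{2(1-\alpha^2)\beta}}$ term and a term of order $\theta_{\calU_2}$; rewriting the first contribution as $\tfrac{1}{10\beta\sqrt{1-\alpha^2}}\cdot\tfrac{5\nu}{\alpha\sqrt{2\beta}}\le \tfrac{1}{10\beta\sqrt{1-\alpha^2}}\theta_{\calU_2}$ recovers the required factor $\tfrac{1}{10\beta\sqrt{1-\alpha^2}}+1$.

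\textbf{Inductive step.} Suppose both inequalities hold at shift $s$. For the distance bound at shift $s+1$, chain the contraction \eqref{eq:contraction} with the two induction hypotheses:
\begin{equation*}
\frac{5L_2\|\vx_{\calU_2+s+1}-\vx^*\|}{\alpha\sqrt{2\beta}\mu}
\le \frac{1}{\sqrt{2\mu\eta_{\calU_2+s}}}\cdot\frac{5L_2\|\vx_{\calU_2+s}-\vx^*\|}{\alpha\sqrt{2\beta}\mu}
\le \frac{1}{\sqrt{2}}\Bigl(\tfrac{1}{10\beta\sqrt{1-\alpha^2}}+1\Bigr)\theta_{\calU_2+s}^{\,2}.
\end{equation*}
It then suffices to show $\tfrac{1}{\sqrt{2}}\bigl(\tfrac{1}{10\beta\sqrt{1-\alpha^2}}+1\bigr)\theta_{\calU_2+s}\le \theta_{\calU_2+s+1}/\theta_{\calU_2+s}\cdot \theta_{\calU_2+s}$; using the weight regularity in Assumption~\ref{assum:w}(v) to get $\theta_{\calU_2+s}\le \Psi\theta_{\calU_2+s+1}$ and the monotonicity $\theta_{\calU_2+s}\le \theta_{\calU_2}=O(\nu)$, condition \eqref{eq:inequality_on_nu} on $\nu$ closes the inequality. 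For the step-size bound at shift $s+1$, split on whether $\calU_2+s+1\in\calB$: in the backtracking case, Lemma~\ref{lem:1/t_first_bound_non_longer} yields three terms, the last two of which together equal $\theta_{\calU_2+s+1}$, while the first term equals $\tfrac{1}{10\beta\sqrt{1-\alpha^2}}\cdot\tfrac{5L_2\|\vx_{\calU_2+s+1}-\vx^*\|}{\alpha\sqrt{2\beta}\mu}$, bounded by $\tfrac{\theta_{\calU_2+s+1}}{10\beta\sqrt{1-\alpha^2}}$ via the already-proven first inequality at shift $s+1$. In the non-backtracking case, $\eta_{\calU_2+s+1}=\eta_{\calU_2+s}/\beta$, so $\tfrac{1}{\sqrt{\mu\eta_{\calU_2+s+1}}}=\sqrt{\beta}\,\tfrac{1}{\sqrt{\mu\eta_{\calU_2+s}}}$; the induction hypothesis combined with $\sqrt{\beta}\,\theta_{\calU_2+s}\le \theta_{\calU_2+s+1}$ (which follows from $w(s+1)/w(s)\le\Psi\le 1/\sqrt{\beta}$ and the same for $w'$, given the assumption $\beta\le 1/\Psi^2$) finishes the case.

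\textbf{Main obstacle.} The delicate part is the calibration of the factor $5$ in the definition of $\theta_t$ together with the constant $\tfrac{1}{10\beta\sqrt{1-\alpha^2}}+1$: the coefficient in front of $\|\vx-\vx^*\|$ in the backtracking lemma is $\tfrac{1}{2\alpha\beta\sqrt{2(1-\alpha^2)\beta}\mu}$, and one must write this as exactly $\tfrac{1}{10\beta\sqrt{1-\alpha^2}}\cdot\tfrac{5}{\alpha\sqrt{2\beta}\mu}$ so that the contribution combines cleanly with the stochastic and bias terms. A second subtlety is establishing the slow-variation estimates $\theta_{\calU_2+s}\le\Psi\theta_{\calU_2+s+1}$ and $\sqrt{\beta}\theta_{\calU_2+s}\le \theta_{\calU_2+s+1}$ term-by-term (for the noise and bias pieces separately), which is where Assumption~\ref{assum:w}(v) is invoked. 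Once those two reductions are in place, the induction closes in both cases.
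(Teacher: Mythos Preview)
Your proposal is correct and follows essentially the same approach as the paper's proof: an induction on the shift index, with the base case handled via Lemma~\ref{lem:Et_and_dis_bound_non} and Lemma~\ref{lem:1/t_bound_non}, and the inductive step split on whether $\calU_2+s+1\in\calB$ (invoking Lemma~\ref{lem:1/t_first_bound_non_longer} in the backtracking case and the relation $\sqrt{\beta}\,\theta_s\le\theta_{s+1}$ otherwise). Your reading of $\theta_t$ as $\theta_{\calU_2+t}$ is exactly the interpretation the paper's own proof uses implicitly (its computation ``$\theta_0\ge \tfrac{5\kappa w(\calJ)}{\alpha\sqrt{2\beta}\,w(\calU_2)}$'' only makes sense under that convention), and your bracketing $\tfrac{5\nu}{\alpha\sqrt{2\beta}}\le\theta_{\calU_2}\le\tfrac{6\nu}{\alpha\sqrt{2\beta}}$ matches the paper's bounds verbatim.
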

\begin{proof}

    Note that by Proposition~\ref{prop:HPE}, we have 
    \begin{equation*}
        \|\vx_{\calU_2+t+1} - \vx^*\| \leq \|\vx_{\calU_2+t}-\vx^*\|(1+2\eta_{\calU_2+t} \mu)^{-1/2} \leq \frac{\|\vx_{\calU_2+t}-\vx^*\|}{\sqrt{2\eta_{\calU_2+t}\mu}}. 
    \end{equation*}
    By Lemma~\ref{lem:1/t_bound_backtracking_non}, if $\calU_2+t \in \calB$, then 
    \begin{equation*}
        \frac{1}{\sqrt{\mu \eta_{\calU_2+t}}} \leq  \frac{L_2 \| \vx_{\calU_2+t} - {\vx}^*\|}{2\alpha\beta\sqrt{2(1-\alpha^2)\beta} \mu } + \theta_t.   
    \end{equation*}
    We will prove \eqref{eq:superlinear_induction_non} by induction. 
    First consider the base case $t=0$. We note that $\theta_0 \geq \frac{5\kappa w(\calJ)}{\alpha\sqrt{2 \beta}w(\calU_2)} = \frac{5\nu}{\alpha\sqrt{2 \beta}}$. On the other hand, since $\|\vx_{\calU_2}-\vx^*\| \leq \nu\mu/L_2$, we obtain that $\frac{5 L_2 \| \vx_{\calU_2} - {\vx}^*\|}{\alpha\sqrt{2\beta} \mu } \leq \frac{5 \nu}{\alpha\sqrt{2 \beta}} \leq \theta_0$. Moreover, by Lemma~\ref{lem:1/t_bound_non}, we have 
    \begin{align*}
        \frac{1}{\sqrt{\mu \eta_{\calU_2}}} \leq   \frac{\nu}{2\alpha\beta\sqrt{2(1-\alpha^2)\beta} } + \frac{3\nu}{\alpha\sqrt{2\beta}} + \frac{ \sqrt{2}  \kappa w(\calJ)}{\alpha\sqrt{\beta}w(\calU_2)} & \leq  \frac{\nu}{2\alpha\beta\sqrt{2(1-\alpha^2)\beta} } + \frac{ 5 \nu}{\alpha\sqrt{2\beta}}\\ &\leq \left(\frac{1}{10\beta\sqrt{1-\alpha^2}}+1\right)\theta_0 .  
    \end{align*} 
    This shows that \eqref{eq:superlinear_induction_non} holds for $t = 0$. 

    Now assume that \eqref{eq:superlinear_induction_non} holds for $t = s \geq 0$. For $t = s+1$, by using the induction hypothesis, we obtain  
    \begin{equation*}
        \frac{5 L_2\|\vx_{\calU_2+s+1} - \vx^*\|}{\alpha \sqrt{2\beta} \mu} \leq \frac{5 L_2\|\vx_{\calU_2+s}-\vx^*\|}{\alpha\sqrt{2 \beta} \mu \sqrt{2\eta_{\calU_2+s}\mu}} \leq \frac{1}{ \sqrt{2}}  \left(\frac{1}{10\beta\sqrt{1-\alpha^2}}+1\right) \theta_s^2.
    \end{equation*}
    Note that $\theta_s/\Psi \leq \theta_{s+1}$. Thus, it suffices to show that $\frac{1}{ \sqrt{2}}  \left(\frac{1}{10\beta\sqrt{1-\alpha^2}}+1\right)\theta_s^2 \leq \theta_s/\Psi$, which is equivalent to $ \frac{\Psi}{\sqrt{2}}\left(\frac{1}{10\beta\sqrt{1-\alpha^2}}+1\right) \theta_s \leq 1$. Furthermore, note that $\theta_s$ is non-increasing and $\theta_0 \leq \frac{\nu}{\alpha\sqrt{2\beta}} + \frac{5\nu}{\alpha\sqrt{2 \beta}} = \frac{6\nu}{\alpha\sqrt{2 \beta}}$. Thus, we only need to require 
    \begin{equation*}
        \frac{\Psi}{\sqrt{2}}\left(\frac{1}{10\beta\sqrt{1-\alpha^2}}+1\right)\frac{6 \nu}{\alpha \sqrt{2\beta}} \leq 1 \quad \Leftrightarrow \quad \left(\frac{3}{10\alpha\beta\sqrt{(1-\alpha^2) \beta}}+\frac{3}{\alpha\sqrt{\beta}}\right){\nu} \leq \frac{1}{\Psi}, 
    \end{equation*}
    which is satisfied due to \eqref{eq:inequality_on_nu}. 
    This proves the first inequality in \eqref{eq:superlinear_induction_non}. 
    
    To prove the second inequality in \eqref{eq:superlinear_induction_non} for $t = s+1$, we distinguish two cases. If $s+1 \in \calB$, then by Lemma~\ref{lem:1/t_first_bound_non_longer}, we have 
    \begin{align*}
        \frac{1}{\sqrt{\mu \eta_{\calU_2+s+1}}} &\leq  \frac{L_2 \| \vx_{\calU_2+s+1} - {\vx}^*\|}{2\alpha\beta\sqrt{2(1-\alpha^2)\beta} \mu } + \frac{\|\bar{\mE}_{\calU_2+s+1}\|}{\alpha\sqrt{2\beta} \mu} + \frac{5 \kappa w(\calJ)}{\alpha\sqrt{2\beta}w(\calU_2+s+1)} \\ 
        &\leq \frac{1}{10\beta\sqrt{1-\alpha^2}} \theta_{s+1} + \theta_{s+1}  \leq \left(\frac{1}{10\beta\sqrt{1-\alpha^2}}+1\right)\theta_{s+1}. 
    \end{align*}
    Otherwise, if $s+1 \notin \calB$, then we have $\eta_{\calU_2+s+1} =\eta_{\calU_2+s}/\beta$ and hence 
    \begin{equation*}
        \frac{1}{\sqrt{\mu \eta_{\calU_2+s+1}}} = \frac{\sqrt{\beta}}{\sqrt{\mu \eta_{\calU_2+s}}} \leq \left(\frac{1}{10\beta\sqrt{1-\alpha^2}}+1\right) \sqrt{\beta}\theta_{s}. 
    \end{equation*}
    Since $\theta_s/\Psi \leq \theta_{s+1}$ and $\sqrt{\beta} \leq 1/\Psi$, this implies that $\frac{1}{\sqrt{\mu \eta_{\calT_2+s+1}}} \leq \left(\frac{1}{10\beta\sqrt{1-\alpha^2}}+1\right) \theta_{s+1}$. This completes the induction. 
\end{proof}

\begin{proof}[Proof of Theorem~\ref{thm:superlinear_phase_non}]
    By Proposition~\ref{prop:HPE}, we have 
    \begin{equation*}
        \|\vx_{\calT_2+t+1} - \vx^*\| \leq \|\vx_{\calT_2+t}-\vx^*\|(1+2\eta_{\calT_2+t} \mu)^{-1/2} \leq \frac{\|\vx_{\calT_2+t}-\vx^*\|}{\sqrt{2\eta_{\calT_2+t}\mu}}. 
    \end{equation*}
    The rest follows from Lemma~\ref{lem:superlinear_induction_non}. 
\end{proof}

\section{Additional discussions}

\subsection{Iteration complexity of SNPE}\label{appen:complexity}

Note that for $t \geq \mathcal{U}_3 = \tilde{\bigO}(\Upsilon^2 + \kappa)$, we have $\|\vx_{t+1}-\vx^*\| = \tilde{\bigO}(\frac{\Upsilon}{\sqrt{t}}) \|\vx_{t}-\vx^*\|$ by Theorem~\ref{thm:non_uniform_main_thm}. By unrolling the inequality, this implies that $$\|\vx_{t+1}-\vx^*\| \leq \|\vx_{\mathcal{U}_3}-\vx^*\| \tilde{\bigO}\left(\prod_{s = \mathcal{U}_3}^t \frac{\Upsilon}{\sqrt{s}}\right) \leq \|\vx_0 - \vx^*\| \tilde{\bigO}\left(\prod_{s = \mathcal{U}_3}^t \frac{\Upsilon}{\sqrt{s}}\right).$$ Further, for $t \geq 2\mathcal{U}_3$, we can upper bound $\frac{\Upsilon}{\sqrt{s}}$ as follows: (i) $\frac{\Upsilon}{\sqrt{s}} \leq \frac{\Upsilon}{\sqrt{\mathcal{U}_3}} \leq 1$ for any $s \in [\mathcal{U}_3, t/2]$, (ii) $\frac{\Upsilon}{\sqrt{s}} \leq \frac{\Upsilon}{\sqrt{t/2}}$ for any $s \in [t/2,t]$. 
Thus, we have $$\prod_{s = \mathcal{U}_3}^t \frac{\Upsilon}{\sqrt{s}} \leq \prod_{s = t/2}^t \frac{\Upsilon}{\sqrt{s}} \leq \left(\frac{\Upsilon}{\sqrt{t/2}}\right)^{\frac{t}{2}}.$$

To derive a complexity bound, we upper bound the required number of iterations $t$ such that $ \left(\frac{\Upsilon}{\sqrt{t/2}}\right)^{\frac{t}{2}} = \epsilon$. Taking the logarithm of both sides and with some algebraic manipulation, we obtain $$\frac{t}{2\Upsilon^2} \log \frac{t}{2\Upsilon^2} = \frac{2}{\Upsilon^2} \log \frac{1}{\epsilon}.$$ Using the Lambert W function\footnote{\url{https://en.wikipedia.org/wiki/Lambert_W_function}}, the solution can be expressed as $ \log \frac{t}{2\Upsilon^2} = W(\frac{2}{\Upsilon^2} \log \frac{1}{\epsilon}) \Rightarrow t = 2\Upsilon^2 e^{ W(\frac{2}{\Upsilon^2} \log \frac{1}{\epsilon})}$. Finally, by applying the bound $e^{W(x)} \leq \frac{2x+1}{1 + \log (x+1)}$ for any $x \geq 0$, we conclude that $t = \bigO\left(\frac{\log(\epsilon^{-1})}{\log(\Upsilon^{-2}\log(\epsilon^{-1}))}\right)$. Note that in the above derivation, we ignore the additional logarithmic factor $\log (t)$ in our superlinear convergence rate. However, a more careful analysis will show that it does not affect the final complexity bound. We also refer the reader to a similar derivation in \cite[Appendix D.2]{jiang2023online}, where the authors provide the same complexity bound for a similar convergence rate of $(1+ \bigO(\sqrt{t}))^{-t}$. 

\subsection{The effect of the extragradient step}\label{appen:extragradient}

In Figure~\ref{fig:comparison}, we test the effect of the extragradient step in our proposed SNPE method. We observe that in all cases, the variant without an extragradient step outperforms the original version, suggesting that the extragradient step may not be beneficial for minimization problems. Nevertheless, the SNPE method with the extragradient step, which is the one analyzed in our paper, still outperforms the stochastic Newton method in~\cite{na2022hessian}.

\begin{figure}
    \centering
  \subfloat[$n = 50,000$, $d = 500$]{\includegraphics[width=0.31\textwidth]{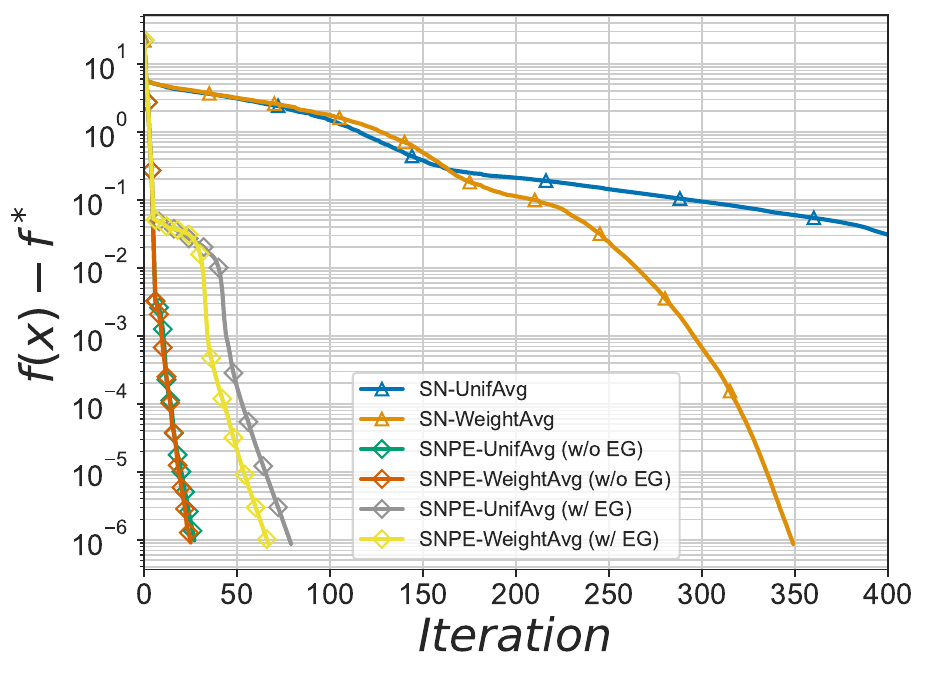}}
  \quad
  \subfloat[$n = 100,000$, $d = 500$]{\includegraphics[width=0.31\textwidth]{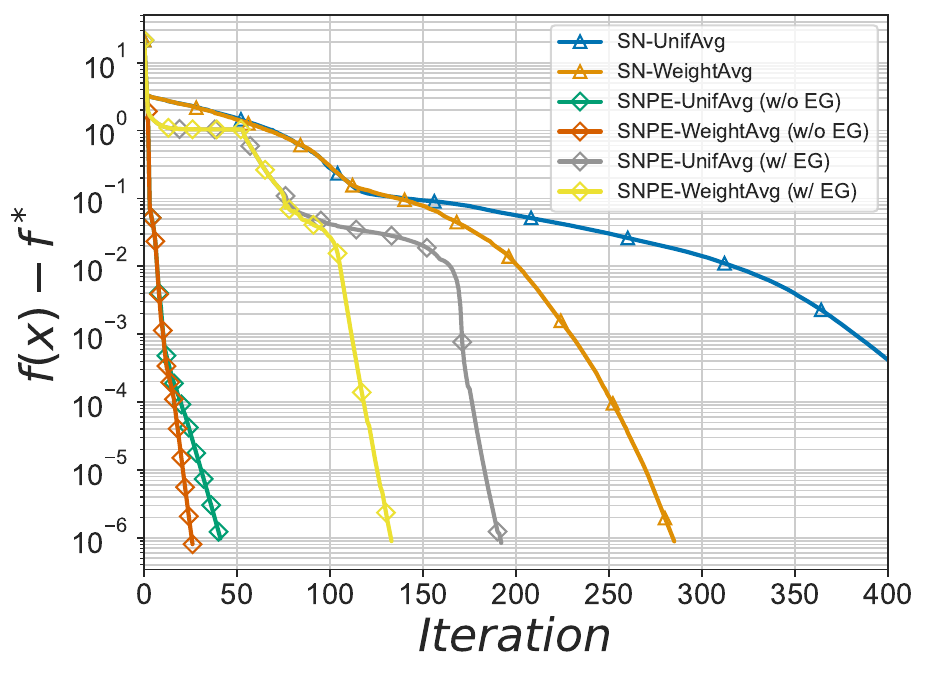}}
  \quad
  \subfloat[$n = 150,000$, $d = 500$]{\includegraphics[width=0.31\textwidth]{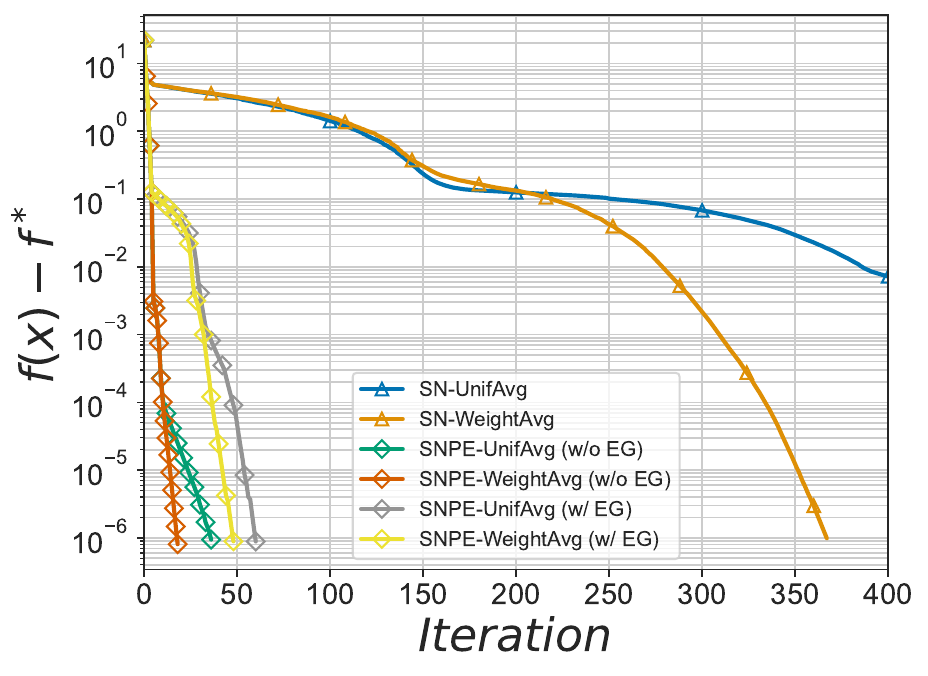}}
    \caption{The effect of the extragradient step in stochastic NPE.}
    \label{fig:comparison}
\end{figure}

\newpage
\section*{NeurIPS Paper Checklist}

The checklist is designed to encourage best practices for responsible machine learning research, addressing issues of reproducibility, transparency, research ethics, and societal impact. Do not remove the checklist: {\bf The papers not including the checklist will be desk rejected.} The checklist should follow the references and follow the (optional) supplemental material.  The checklist does NOT count towards the page
limit. 

Please read the checklist guidelines carefully for information on how to answer these questions. For each question in the checklist:
\begin{itemize}
    \item You should answer \answerYes{}, \answerNo{}, or \answerNA{}.
    \item \answerNA{} means either that the question is Not Applicable for that particular paper or the relevant information is Not Available.
    \item Please provide a short (1–2 sentence) justification right after your answer (even for NA). 
\end{itemize}

{\bf The checklist answers are an integral part of your paper submission.} They are visible to the reviewers, area chairs, senior area chairs, and ethics reviewers. You will be asked to also include it (after eventual revisions) with the final version of your paper, and its final version will be published with the paper.

The reviewers of your paper will be asked to use the checklist as one of the factors in their evaluation. While "\answerYes{}" is generally preferable to "\answerNo{}", it is perfectly acceptable to answer "\answerNo{}" provided a proper justification is given (e.g., "error bars are not reported because it would be too computationally expensive" or "we were unable to find the license for the dataset we used"). In general, answering "\answerNo{}" or "\answerNA{}" is not grounds for rejection. While the questions are phrased in a binary way, we acknowledge that the true answer is often more nuanced, so please just use your best judgment and write a justification to elaborate. All supporting evidence can appear either in the main paper or the supplemental material, provided in appendix. If you answer \answerYes{} to a question, in the justification please point to the section(s) where related material for the question can be found.

\begin{enumerate}

\item {\bf Claims}
    \item[] Question: Do the main claims made in the abstract and introduction accurately reflect the paper's contributions and scope?
    \item[] Answer: \answerYes{}%
    \item[] Justification: In the abstract and introduction of the paper, we claim that we can improve the complexity of the stochastic Newton method presented in \cite{na2022hessian}, and this is exactly what we demonstrate. This improvement is achieved by introducing a novel stochastic variant of the Newton HPE framework, as promised in the abstract and introduction.
    
    \item[] Guidelines:
    \begin{itemize}
        \item The answer NA means that the abstract and introduction do not include the claims made in the paper.
        \item The abstract and/or introduction should clearly state the claims made, including the contributions made in the paper and important assumptions and limitations. A No or NA answer to this question will not be perceived well by the reviewers. 
        \item The claims made should match theoretical and experimental results, and reflect how much the results can be expected to generalize to other settings. 
        \item It is fine to include aspirational goals as motivation as long as it is clear that these goals are not attained by the paper. 
    \end{itemize}

\item {\bf Limitations}
    \item[] Question: Does the paper discuss the limitations of the work performed by the authors?
    \item[] Answer: \answerYes{} %
    \item[] Justification: In Section~\ref{sec:conclusion}, we explicitly mentioned the limitation of our paper. Specifically, we noted that our theory assumes strong convexity. Extending the theory to the convex setting would make it more general. This extension is left for future work due to space limitations. We also highlighted in the introduction of our paper that we focus solely on the setting where the Hessian oracle is noisy, and the gradient can be queried without error. 
    
    \item[] Guidelines:
    \begin{itemize}
        \item The answer NA means that the paper has no limitation while the answer No means that the paper has limitations, but those are not discussed in the paper. 
        \item The authors are encouraged to create a separate "Limitations" section in their paper.
        \item The paper should point out any strong assumptions and how robust the results are to violations of these assumptions (e.g., independence assumptions, noiseless settings, model well-specification, asymptotic approximations only holding locally). The authors should reflect on how these assumptions might be violated in practice and what the implications would be.
        \item The authors should reflect on the scope of the claims made, e.g., if the approach was only tested on a few datasets or with a few runs. In general, empirical results often depend on implicit assumptions, which should be articulated.
        \item The authors should reflect on the factors that influence the performance of the approach. For example, a facial recognition algorithm may perform poorly when image resolution is low or images are taken in low lighting. Or a speech-to-text system might not be used reliably to provide closed captions for online lectures because it fails to handle technical jargon.
        \item The authors should discuss the computational efficiency of the proposed algorithms and how they scale with dataset size.
        \item If applicable, the authors should discuss possible limitations of their approach to address problems of privacy and fairness.
        \item While the authors might fear that complete honesty about limitations might be used by reviewers as grounds for rejection, a worse outcome might be that reviewers discover limitations that aren't acknowledged in the paper. The authors should use their best judgment and recognize that individual actions in favor of transparency play an important role in developing norms that preserve the integrity of the community. Reviewers will be specifically instructed to not penalize honesty concerning limitations.
    \end{itemize}

\item {\bf Theory Assumptions and Proofs}
    \item[] Question: For each theoretical result, does the paper provide the full set of assumptions and a complete (and correct) proof?
    \item[] Answer: \answerYes{} %
    \item[] Justification: All the theorems, formulas, and proofs in the paper are numbered and cross-referenced. All assumptions for each presented result are clearly stated or referenced in the statements of the lemmas, propositions, or theorems. The proofs of all results are presented in the supplemental material. High-level ideas of the proofs are included in the main text whenever possible. 
    
    \item[] Guidelines:
    \begin{itemize}
        \item The answer NA means that the paper does not include theoretical results. 
        \item All the theorems, formulas, and proofs in the paper should be numbered and cross-referenced.
        \item All assumptions should be clearly stated or referenced in the statement of any theorems.
        \item The proofs can either appear in the main paper or the supplemental material, but if they appear in the supplemental material, the authors are encouraged to provide a short proof sketch to provide intuition. 
        \item Inversely, any informal proof provided in the core of the paper should be complemented by formal proofs provided in appendix or supplemental material.
        \item Theorems and Lemmas that the proof relies upon should be properly referenced. 
    \end{itemize}

    \item {\bf Experimental Result Reproducibility}
    \item[] Question: Does the paper fully disclose all the information needed to reproduce the main experimental results of the paper to the extent that it affects the main claims and/or conclusions of the paper (regardless of whether the code and data are provided or not)?
    \item[] Answer: \answerYes{} %
    \item[] Justification: The experimental results  and the implementation details are included in Section~\ref{sec:numerical}. 
    \item[] Guidelines:
    \begin{itemize}
        \item The answer NA means that the paper does not include experiments.
        \item If the paper includes experiments, a No answer to this question will not be perceived well by the reviewers: Making the paper reproducible is important, regardless of whether the code and data are provided or not.
        \item If the contribution is a dataset and/or model, the authors should describe the steps taken to make their results reproducible or verifiable. 
        \item Depending on the contribution, reproducibility can be accomplished in various ways. For example, if the contribution is a novel architecture, describing the architecture fully might suffice, or if the contribution is a specific model and empirical evaluation, it may be necessary to either make it possible for others to replicate the model with the same dataset, or provide access to the model. In general. releasing code and data is often one good way to accomplish this, but reproducibility can also be provided via detailed instructions for how to replicate the results, access to a hosted model (e.g., in the case of a large language model), releasing of a model checkpoint, or other means that are appropriate to the research performed.
        \item While NeurIPS does not require releasing code, the conference does require all submissions to provide some reasonable avenue for reproducibility, which may depend on the nature of the contribution. For example
        \begin{enumerate}
            \item If the contribution is primarily a new algorithm, the paper should make it clear how to reproduce that algorithm.
            \item If the contribution is primarily a new model architecture, the paper should describe the architecture clearly and fully.
            \item If the contribution is a new model (e.g., a large language model), then there should either be a way to access this model for reproducing the results or a way to reproduce the model (e.g., with an open-source dataset or instructions for how to construct the dataset).
            \item We recognize that reproducibility may be tricky in some cases, in which case authors are welcome to describe the particular way they provide for reproducibility. In the case of closed-source models, it may be that access to the model is limited in some way (e.g., to registered users), but it should be possible for other researchers to have some path to reproducing or verifying the results.
        \end{enumerate}
    \end{itemize}

\item {\bf Open access to data and code}
    \item[] Question: Does the paper provide open access to the data and code, with sufficient instructions to faithfully reproduce the main experimental results, as described in supplemental material?
    \item[] Answer: \answerYes{} %
    \item[] Justification: The code and data are attached in the supplementary material. 
    \item[] Guidelines:
    \begin{itemize}
        \item The answer NA means that paper does not include experiments requiring code.
        \item Please see the NeurIPS code and data submission guidelines (\url{https://nips.cc/public/guides/CodeSubmissionPolicy}) for more details.
        \item While we encourage the release of code and data, we understand that this might not be possible, so “No” is an acceptable answer. Papers cannot be rejected simply for not including code, unless this is central to the contribution (e.g., for a new open-source benchmark).
        \item The instructions should contain the exact command and environment needed to run to reproduce the results. See the NeurIPS code and data submission guidelines (\url{https://nips.cc/public/guides/CodeSubmissionPolicy}) for more details.
        \item The authors should provide instructions on data access and preparation, including how to access the raw data, preprocessed data, intermediate data, and generated data, etc.
        \item The authors should provide scripts to reproduce all experimental results for the new proposed method and baselines. If only a subset of experiments are reproducible, they should state which ones are omitted from the script and why.
        \item At submission time, to preserve anonymity, the authors should release anonymized versions (if applicable).
        \item Providing as much information as possible in supplemental material (appended to the paper) is recommended, but including URLs to data and code is permitted.
    \end{itemize}

\item {\bf Experimental Setting/Details}
    \item[] Question: Does the paper specify all the training and test details (e.g., data splits, hyperparameters, how they were chosen, type of optimizer, etc.) necessary to understand the results?
    \item[] Answer: \answerYes{} %
    \item[] Justification: We explained how we performed the experiments and included the implementation details in Section~\ref{sec:numerical}. 
    \item[] Guidelines:
    \begin{itemize}
        \item The answer NA means that the paper does not include experiments.
        \item The experimental setting should be presented in the core of the paper to a level of detail that is necessary to appreciate the results and make sense of them.
        \item The full details can be provided either with the code, in appendix, or as supplemental material.
    \end{itemize}

\item {\bf Experiment Statistical Significance}
    \item[] Question: Does the paper report error bars suitably and correctly defined or other appropriate information about the statistical significance of the experiments?
    \item[] Answer: \answerNo{} %
    \item[] Justification: 
    \item[] Guidelines:
    \begin{itemize}
        \item The answer NA means that the paper does not include experiments.
        \item The authors should answer "Yes" if the results are accompanied by error bars, confidence intervals, or statistical significance tests, at least for the experiments that support the main claims of the paper.
        \item The factors of variability that the error bars are capturing should be clearly stated (for example, train/test split, initialization, random drawing of some parameter, or overall run with given experimental conditions).
        \item The method for calculating the error bars should be explained (closed form formula, call to a library function, bootstrap, etc.)
        \item The assumptions made should be given (e.g., Normally distributed errors).
        \item It should be clear whether the error bar is the standard deviation or the standard error of the mean.
        \item It is OK to report 1-sigma error bars, but one should state it. The authors should preferably report a 2-sigma error bar than state that they have a 96\% CI, if the hypothesis of Normality of errors is not verified.
        \item For asymmetric distributions, the authors should be careful not to show in tables or figures symmetric error bars that would yield results that are out of range (e.g. negative error rates).
        \item If error bars are reported in tables or plots, The authors should explain in the text how they were calculated and reference the corresponding figures or tables in the text.
    \end{itemize}

\item {\bf Experiments Compute Resources}
    \item[] Question: For each experiment, does the paper provide sufficient information on the computer resources (type of compute workers, memory, time of execution) needed to reproduce the experiments?
    \item[] Answer: \answerYes{} %
    \item[] Justification: The compute resources are reported in Section~\ref{sec:numerical}.
    \item[] Guidelines:
    \begin{itemize}
        \item The answer NA means that the paper does not include experiments.
        \item The paper should indicate the type of compute workers CPU or GPU, internal cluster, or cloud provider, including relevant memory and storage.
        \item The paper should provide the amount of compute required for each of the individual experimental runs as well as estimate the total compute. 
        \item The paper should disclose whether the full research project required more compute than the experiments reported in the paper (e.g., preliminary or failed experiments that didn't make it into the paper). 
    \end{itemize}
    
\item {\bf Code Of Ethics}
    \item[] Question: Does the research conducted in the paper conform, in every respect, with the NeurIPS Code of Ethics \url{https://neurips.cc/public/EthicsGuidelines}?
    \item[] Answer: \answerYes{} %
    \item[] Justification: The research conducted in the paper conform, in every respect, with the NeurIPS Code of Ethics.
    \item[] Guidelines:
    \begin{itemize}
        \item The answer NA means that the authors have not reviewed the NeurIPS Code of Ethics.
        \item If the authors answer No, they should explain the special circumstances that require a deviation from the Code of Ethics.
        \item The authors should make sure to preserve anonymity (e.g., if there is a special consideration due to laws or regulations in their jurisdiction).
    \end{itemize}

\item {\bf Broader Impacts}
    \item[] Question: Does the paper discuss both potential positive societal impacts and negative societal impacts of the work performed?
    \item[] Answer: \answerNA{} %
    \item[] Justification: There is no societal impact of the work performed.
    \item[] Guidelines:
    \begin{itemize}
        \item The answer NA means that there is no societal impact of the work performed.
        \item If the authors answer NA or No, they should explain why their work has no societal impact or why the paper does not address societal impact.
        \item Examples of negative societal impacts include potential malicious or unintended uses (e.g., disinformation, generating fake profiles, surveillance), fairness considerations (e.g., deployment of technologies that could make decisions that unfairly impact specific groups), privacy considerations, and security considerations.
        \item The conference expects that many papers will be foundational research and not tied to particular applications, let alone deployments. However, if there is a direct path to any negative applications, the authors should point it out. For example, it is legitimate to point out that an improvement in the quality of generative models could be used to generate deepfakes for disinformation. On the other hand, it is not needed to point out that a generic algorithm for optimizing neural networks could enable people to train models that generate Deepfakes faster.
        \item The authors should consider possible harms that could arise when the technology is being used as intended and functioning correctly, harms that could arise when the technology is being used as intended but gives incorrect results, and harms following from (intentional or unintentional) misuse of the technology.
        \item If there are negative societal impacts, the authors could also discuss possible mitigation strategies (e.g., gated release of models, providing defenses in addition to attacks, mechanisms for monitoring misuse, mechanisms to monitor how a system learns from feedback over time, improving the efficiency and accessibility of ML).
    \end{itemize}
    
\item {\bf Safeguards}
    \item[] Question: Does the paper describe safeguards that have been put in place for responsible release of data or models that have a high risk for misuse (e.g., pretrained language models, image generators, or scraped datasets)?
    \item[] Answer: \answerNA{} %
    \item[] Justification: The paper poses no such risks.
    \item[] Guidelines:
    \begin{itemize}
        \item The answer NA means that the paper poses no such risks.
        \item Released models that have a high risk for misuse or dual-use should be released with necessary safeguards to allow for controlled use of the model, for example by requiring that users adhere to usage guidelines or restrictions to access the model or implementing safety filters. 
        \item Datasets that have been scraped from the Internet could pose safety risks. The authors should describe how they avoided releasing unsafe images.
        \item We recognize that providing effective safeguards is challenging, and many papers do not require this, but we encourage authors to take this into account and make a best faith effort.
    \end{itemize}

\item {\bf Licenses for existing assets}
    \item[] Question: Are the creators or original owners of assets (e.g., code, data, models), used in the paper, properly credited and are the license and terms of use explicitly mentioned and properly respected?
    \item[] Answer: \answerNA{} %
    \item[] Justification: The paper does not use existing assets.
    \item[] Guidelines:
    \begin{itemize}
        \item The answer NA means that the paper does not use existing assets.
        \item The authors should cite the original paper that produced the code package or dataset.
        \item The authors should state which version of the asset is used and, if possible, include a URL.
        \item The name of the license (e.g., CC-BY 4.0) should be included for each asset.
        \item For scraped data from a particular source (e.g., website), the copyright and terms of service of that source should be provided.
        \item If assets are released, the license, copyright information, and terms of use in the package should be provided. For popular datasets, \url{paperswithcode.com/datasets} has curated licenses for some datasets. Their licensing guide can help determine the license of a dataset.
        \item For existing datasets that are re-packaged, both the original license and the license of the derived asset (if it has changed) should be provided.
        \item If this information is not available online, the authors are encouraged to reach out to the asset's creators.
    \end{itemize}

\item {\bf New Assets}
    \item[] Question: Are new assets introduced in the paper well documented and is the documentation provided alongside the assets?
    \item[] Answer: \answerNA{} %
    \item[] Justification: The paper does not release new assets.
    \item[] Guidelines:
    \begin{itemize}
        \item The answer NA means that the paper does not release new assets.
        \item Researchers should communicate the details of the dataset/code/model as part of their submissions via structured templates. This includes details about training, license, limitations, etc. 
        \item The paper should discuss whether and how consent was obtained from people whose asset is used.
        \item At submission time, remember to anonymize your assets (if applicable). You can either create an anonymized URL or include an anonymized zip file.
    \end{itemize}

\item {\bf Crowdsourcing and Research with Human Subjects}
    \item[] Question: For crowdsourcing experiments and research with human subjects, does the paper include the full text of instructions given to participants and screenshots, if applicable, as well as details about compensation (if any)? 
    \item[] Answer: \answerNA{} %
    \item[] Justification: 
    \item[] Guidelines:
    \begin{itemize}
        \item The answer NA means that the paper does not involve crowdsourcing nor research with human subjects.
        \item Including this information in the supplemental material is fine, but if the main contribution of the paper involves human subjects, then as much detail as possible should be included in the main paper. 
        \item According to the NeurIPS Code of Ethics, workers involved in data collection, curation, or other labor should be paid at least the minimum wage in the country of the data collector. 
    \end{itemize}

\item {\bf Institutional Review Board (IRB) Approvals or Equivalent for Research with Human Subjects}
    \item[] Question: Does the paper describe potential risks incurred by study participants, whether such risks were disclosed to the subjects, and whether Institutional Review Board (IRB) approvals (or an equivalent approval/review based on the requirements of your country or institution) were obtained?
    \item[] Answer: \answerNA{} %
    \item[] Justification: The paper does not involve crowdsourcing nor research with human subjects.
    \item[] Guidelines:
    \begin{itemize}
        \item The answer NA means that the paper does not involve crowdsourcing nor research with human subjects.
        \item Depending on the country in which research is conducted, IRB approval (or equivalent) may be required for any human subjects research. If you obtained IRB approval, you should clearly state this in the paper. 
        \item We recognize that the procedures for this may vary significantly between institutions and locations, and we expect authors to adhere to the NeurIPS Code of Ethics and the guidelines for their institution. 
        \item For initial submissions, do not include any information that would break anonymity (if applicable), such as the institution conducting the review.
    \end{itemize}

\end{enumerate}

\end{document}